\pgfplotsset{compat=newest}
\newtheorem{theorem}{Theorem}[section]
\newtheorem{proposition}[theorem]{Proposition}
\newtheorem{lemma}[theorem]{Lemma}
\newtheorem{corollary}[theorem]{Corollary}
\newtheorem{definition}[theorem]{Definition}
\newtheorem{example}[theorem]{Example}
\newtheorem{remark}[theorem]{Remark}
\def\A{\mathbb A}
\newcommand{\Fq}{{\mathbb{F}_q}}
\newcommand{\Fqm}{{\mathbb{F}_{q^m}}}
\newcommand{\F}{\mathbb{F}}
\newcommand{\E}{\mathcal{E}}
\newcommand{\M}{\mathcal{M}}
\newcommand{\PP}{\mathcal{P}}
\newcommand{\CC}{\mathfrak{C}}
\newcommand{\EE}{\bar{\E}}
\newcommand{\MM}{\bar{\M}}
\renewcommand{\a}{\mathfrak{a}}
\renewcommand{\P}{\mathbb{P}}
\renewcommand{\t}{\bar{t}}
\title{Locally Recoverable Codes with availability from a family of fibered surfaces}
\date{}
\author[Cecília Salgado]{Cecília Salgado$^1$} \address{$^1$Faculty of Science and Engineering - Bernoulli Institute, University of Groningen, Groningen 9747AG, The Netherlands} \email{c.salgado@rug.nl} \thanks{}
\author[Lara Vicino]{Lara Vicino$^{1,2}$} \address{}  \email{l.vicino@rug.nl} \thanks{}
\address{$^2$Department of Applied Mathematics and Computer Science, Technical University of Denmark, Kongens Lyngby 2800, Denmark}
\begin{document}

\begin{abstract}
    We construct Locally Recoverable Codes (LRCs) with availability $2$ from a family of fibered surfaces. To obtain the locality and availability properties, and to estimate the minimum distance of the codes, we combine techniques coming from the theory of one-variable function fields and from the theory of fibrations on surfaces. When the  locality parameter is $r=3$, we obtain a sharp bound on the minimum distance of the codes. In that case, we give a geometric interpretation of our codes in terms of doubly elliptic surfaces. In particular, this provides the first instance of an error correcting code constructed using a (doubly elliptic) K3 surface.
\end{abstract}

\maketitle

\thanks{{\em Keywords}: Locally Recoverable Codes, availability, AG codes, one-variable function fields, fibered surfaces.

\thanks{{\em Subject classifications}: 14H05, 14G50, 94B27.

\section{Introduction}
\label{sec:intro}

\emph{Locally Recoverable Codes} (LRCs for short) are a type of error-correcting codes particularly suited for applications in distributed storage systems, where data is split and stored across multiple servers. Indeed, one of the most common issues in such systems is the scenario of a server failure, which entails that portions of data become suddenly unavailable. However, if data is encoded by means of an LRC, then the retrieval of missing information is possible by accessing only a small portion of the remaining data, which makes the recovery procedure fast and cost-effective. For this reason, LRCs have been extensively studied over the last two decades, in particular those with additional properties like \emph{availability} or \emph{hierarchical locality}, which add an extra layer of data protection by ensuring that there are multiple subsets of information symbols that can be used to retrieve a missing symbol. 

Initially studied in \cite{HLM, DGWWR, GHSY, HCL, TB14b}, LRCs have since been obtained with several different techniques, including constructions as evaluation codes from algebraic varieties over finite fields. These constructions as Algebraic Geometry (AG) codes allow us to interpret and deduce properties of the obtained LRCs by studying geometric properties of the underlying varieties. Several examples of constructions of LRCs as AG codes can be found in the literature, for instance from covering maps of curves \cite{BBV21}, fiber products of curves \cite{HMM18, HMM25}, elliptic and Hirzebruch surfaces \cite{SVAV21}, Artin-Schreier surfaces \cite{BMW24} and projective bundles \cite{AAAOAVA24}. 

In this paper, we construct locally recoverable codes with availability from a particular family of fibered surfaces. Our construction is partially inspired by \cite{AAAOAVA24}, where a family of asymptotically good LRCs with availability was obtained from projective space bundles. Indeed, the idea underlying our approach is to consider a surface admitting two distinct fibrations to a projective line and to construct LRCs by evaluating a suitably chosen space of functions on such surface at points that lie on a curve contained in the surface that is horizonal to both fibrations, i.e., intersects all fibers. We choose the curve in such a way that each evaluation point has two disjoint recovery sets, each contained in the fiber of one of the two distinct fibrations that contains the said point. Since two distinct fibers (of the same fibration) do not intersect, the recovery sets obtained in this way are naturally disjoint.

Note that, in this way, the codes we obtain can also be directly interpreted as AG codes from a curve, which makes it possible to use the theory of one-variable function fields and Newton arcs of curves to study their properties. However, since the ideas for obtaining locality and availability in our construction were inspired by viewing our codes as evaluation codes on a surface, in the spirit of \cite{SVAV21} and \cite{AAAOAVA24}, we choose to also adopt the framework of working on the surface, with explicit coordinates, in $\P^2 \times \P^1$. A further reason for this is to shed light on an interpretation of our construction, described in \cref{sec:genus1:fibrations}, where we consider a particular surface in the family that we describe and outline a different recovery procedure, based on the surface having two genus $1$ fibrations. This can be seen as a natural generalization of the results contained in \cite[Section VI]{SVAV21}.

For certain alphabets and certain values of the locality parameter $r$, the codes that we construct can be compared to those obtained in \cite[Section IV]{TB14}, although the comparison is not straightforward. Indeed, for a fixed alphabet, it is not a priori clear whether our construction and the construction in \cite[Section IV]{TB14} can produce a code of the same length and with the same locality parameter $r$.
Moreover, while the construction in \cite[Section IV]{TB14} is solely based on the additive and multiplicative structure of the field, our construction builds upon the study of the splitting of certain places in some function field extensions. While our construction has the downside of being less flexible and of requiring, in general, larger alphabet sizes, it has the advantage of making it possible to obtain a lower bound for the minimum distance which is sharp for $r=3$. The case $r=3$ is the most directly relevant for applications, since having a low number of symbols in a recovery set leads to more efficient recovery procedures (see for instance \cite{GHSY}).

The paper is structured as follows: in \cref{sec:background}, we recall several background notions that are used throughout the paper. We start with definitions on locally recoverable codes and AG codes, and we then recall some results from the theory of one-variable function fields and Newton arcs. We conclude the section by including some essential results on fibered surfaces. 
In \cref{sec:code:construction}, we outline our construction of LRCs from a family of fibered surfaces. In \cref{sec:parameters}, we study the parameters (dimension, locality, availability, minimum distance) of the codes obtained in the previous section, presenting explicit examples in \cref{table:codes}. Finally, in \cref{sec:genus1:fibrations}, we outline a reinterpretation of the construction from one of the surfaces in our family, showing how, in this case, a different recovery procedure can be devised by using specific properties of the surface, which in this case is a K3 surface with two genus 1 fibrations.

\section{Background}
\label{sec:background}

In this section, we introduce the notations and background theory that are used throughout the paper. We start by recalling some definitions on Locally Recoverable Codes and Algebraic Geometry codes, then we proceed to recall some results on Newton arcs of algebraic curves. We conclude the section with a collection of definitions and facts on fibrations and fibered surfaces. 

\subsection{Locally Recoverable Codes}

Let $q=p^s$, with $p$ a prime number and $s\in \mathbb{Z}_{\geq 1}$, and let $\Fq$ be the finite field with $q$ elements. A \emph{linear} $[n,k,d]_q$ \emph{error-correcting code} $C$ over the alphabet $\Fq$ is a linear $\Fq$-subspace of $\Fq^n$. In what follows, we simply refer to such an object as a \emph{code}. The vectors in $C$ are called \emph{codewords}.
The positive integer $n$ is the \emph{length} of the code $C$, while $k$ is the \emph{dimension} of $C$, which is its dimension as an $\Fq$-vector space. The integer $d$ is called the \emph{minimum distance} of $C$, and it is the parameter measuring how many errors or erasures the code can detect and correct: the larger the minimum distance, the more errors or erasures can be detected and corrected. The parameters $n, k, d$ of a code $C$ are related by the celebrated \emph{Singleton bound} $d \leq n - k + 1$, which gives a tradeoff between the distance and the dimension of $C$.

In this paper, we study a particular type of codes having the properties of \emph{locality} and \emph{availability}.

\begin{definition}
\label[definition]{def:LRC}
      A code $C\subset \mathbb{F}_{q}^n$ is a \emph{Locally Recoverable Code} (LRC) with locality $r$ if and only if for all codewords $\mathfrak{c}=(\mathfrak{c}_1,\ldots, \mathfrak{c}_n)\in C$ and for all $i \in \{1,\cdots, n\}$, there exists a subset $I_i\subseteq \{1, \cdots, n\} \setminus \{i\}$ of cardinality $|I_i|=r$ and a function $\varphi_i: \Fq^r \longrightarrow \Fq$ such that $\mathfrak{c}_i = \varphi_i(\mathfrak{c}|_{I_i})$.

      We say that an LRC $C$ has \emph{availability} $\mathfrak{a}$ with locality $(r_1,\ldots,r_{\mathfrak{a}})$ if and only if for all $i \in \{1,\cdots, n\}$ there exist $I_{i1}, \dots, I_{i\mathfrak{a}}\subseteq \{1, \cdots, n\} \setminus \{i\}$ with $|I_{ij}|=r_j$, for all $j=1,\ldots, \mathfrak{a}$, and $I_{ij} \cap I_{ik} = \emptyset$ for any $j \neq k$, and for all $\mathfrak{c}\in C$, $\mathfrak{c}_i = \varphi_{ij}(\mathfrak{c}|_{I_{ij}})$ for some function $\varphi_{ij}:\Fq^{r_j}\longrightarrow \Fq$.  
\end{definition}
Given an LRC with no availability, losing a symbol of a recovery set compromises the locality property of the code. For this reason, the property of availability of an LRC can be seen as adding an extra layer of data protection, since there are multiple disjoint recovery sets for each symbol of every codeword. 

In the paper, we construct and study the parameters of a certain class of LRCs with availability $\mathfrak{a}=2$. More precisely, we obtain these codes as evaluation codes constructed from a family of algebraic surfaces over $\Fq$. In the following subsection, we  introduce some essential background notions on Algebraic Geometry (AG) codes.

\subsection{Algebraic Geometry codes}

\emph{Algebraic Geometry (AG) codes} are evaluation codes obtained from algebraic varieties over finite fields. They were first introduced by V.D.~Goppa between the 70's and the 80's, see \cite{G70, G71, G77, G81, G82}, with a construction from curves over finite fields. In this paper, we construct AG codes from certain surfaces, according to the following \cref{def:AGcodes}, which generalises the one introduced by V.D.~Goppa for curves. 

\begin{definition}
\label[definition]{def:AGcodes}
    Let $\mathcal{X}$ be a quasi-projective variety over a finite field $\Fq$ and denote by $\mathcal{X}(\Fq)$ the set of $\Fq$-rational points of $\mathcal{X}$ and by $\Fq(\mathcal{X})$ the ($\Fq$-rational) function field of $\mathcal{X}$. Let $S:=\{P_1,\ldots,P_n\}\subseteq \mathcal{X}(\Fq)$ be a subset of cardinality $|S| = n$ and $V$ be a finite-dimensional subspace of $\Fq(\mathcal{X})$ of rational functions with no poles at any of the points in $S$. Define the linear evaluation map
        \begin{align*}
        \mathrm{ev}_{S}: \ &V \longrightarrow \Fq^n\\
        &f \longmapsto \left(f(P_1), \ldots, f(P_n)\right). 
    \end{align*}
    The \emph{Algebraic Geometry (AG) code} $C(\mathcal{X},S,V)$ is defined as $C(\mathcal{X},S,V):=\mathrm{im} \ \mathrm{ev}_{S}$, that is, 
    \begin{equation*}
        C(\mathcal{X},S,V) = \{\left(f(P_1), \ldots, f(P_n)\right) \mid f\in V\}.
    \end{equation*}
\end{definition}

A code $C(\mathcal{X},S,V)$ as in \cref{def:AGcodes} has length $n = |S|$ and dimension $k = \mathrm{dim}_{\Fq} \ V - \mathrm{dim}_\Fq \ \mathrm{ker} \ \mathrm{ev}_{S}$. In particular, if the evaluation map $\mathrm{ev}_{S}$ is injective, then the dimension of $C(\mathcal{X},S,V)$ is simply $k = \mathrm{dim}_{\Fq} \ V$. 

Since AG codes are constructed by evaluating functions on an algebraic variety, they have the peculiar feature that their parameters and properties can be understood and studied in terms of properties of the underlying variety. For instance, bounds on the value of the minimum distance $d$ of an AG code $C(\mathcal{X},S,V)$ can be derived by studying the vanishing of the functions in $V$ at the points in $S$. If one can show that any function in $V$ has at most $N$ distinct zeros in the set $S$, then the lower bound $d \geq n - N$ follows. In \cref{subsec:d:bound}, we use such a strategy to prove a lower bound on the minimum distance of the codes constructed in \cref{sec:code:construction}.

Moreover, when constructing LRCs as AG codes, certain features of the underlying variety can be instrumental to devise recovery procedures and obtain further properties such as, for instance, the availability property (see \cref{sec:code:construction} and \cref{sec:parameters}).

\subsection{Newton arcs}
\label{subsec:newton:background}

Let $F$ be a one-variable function field over $\Fq$, i.e., the function field of some curve $\mathcal{X}$ over $\Fq$. 
In this subsection, we recall some fundamental definitions and results on the \emph{Newton arc} associated to a polynomial in $F[T]$ with respect to some place of $F$. An extensive exposition of the theory and results summarised in this subsection can be found in the (unpublished) notes by P.~Beelen \cite{Bnotes}. On the other hand, a corresponding exposition in the setting of number fields can be found in \cite{EF21}. Further references on Newton arcs and polygons of curves are \cite{BP00, Bthesis}. Regarding the theory and notations on one-variable function fields, we instead refer to \cite{Sti}, where Kummer's theorem can be found in \cite[Theorem 3.3.7]{Sti}.

\begin{definition}[\cite{Bnotes}]
\label[definition]{def:supportset}
    Let $Q$ be a place of $F$ and $\varphi(T):=\sum_{i=0}^{m}a_iT^i\in F[T]$ a polynomial of degree $m\geq 0$. The \emph{support set} of $\varphi(T)$ at $Q$ is defined as
    \begin{equation*}
        \mathcal{S}_{\varphi(T),Q}:=\{(i,v_Q(a_i)) \mid 0\leq i \leq m, \ a_i\neq 0\},
    \end{equation*}
    where $v_Q(a_i)$ denotes the valuation of the function $a_i$ at the place $Q$. When the place to which the support set refers to is clear, we simply write $\mathcal{S}_{\varphi(T)}$. 
\end{definition}

\begin{definition}[\cite{Bnotes}]
\label[definition]{def:newtonarc}
    Let $Q$ be a place of $F$, $\varphi(T):=\sum_{i=0}^{m}a_iT^i\in F[T]$ a polynomial of degree $m\geq 0$ and $\mathcal{S}_{\varphi(T),Q}$ as in \cref{def:supportset}.
    For any $\rho,c\in \mathbb{Q}$, consider the half plane $H_{\rho,c}$ defined by
    \begin{equation*}
        H_{\rho,c}:=\{(i,j)\in \mathbb{R}^2 \mid \rho i + j \geq c\}
    \end{equation*}
    and observe that, for any $\rho\in \mathbb{Q}$, there exists a unique $\tilde{c}$ such that $\mathcal{S}_{\varphi(T),Q}\subseteq H_{\rho,\tilde{c}}$ and there exists a point in $\mathcal{S}_{\varphi(T),Q}$ with $\rho i + v_Q(a_i) l = \tilde{c}$. For a given value of $\rho\in \mathbb{Q}$, we call the half plane $H_{\rho,\tilde{c}}$ the \emph{best-fitting} half plane with respect to $\rho$ and we define the lower convex hull of $\mathcal{S}_{\varphi(T),Q}$ as $\Delta_Q(\varphi(T)):=\cap_{\rho\in \mathbb{Q}}H_{\rho,\tilde{c}}$. 

    Then, the boundary of $\Delta_Q(\varphi(T))$ consists of a finite number of line segments, namely: two vertical ones defined by $i=m$ and $i=a$, where $a\in \mathbb{Z}_{\geq 0}$ is the smallest possible integer such that $T^a \mid \varphi(T)$, and finitely many other non-vertical segments.

    The \emph{Newton arc} $\Gamma_Q(\varphi(T))$ of $\varphi(T)$ with respect to $Q$ is defined to be the union of the non-vertical segments in the boundary of $\Delta_Q(\varphi(T))$.  
\end{definition}

Let $\mathcal{L}_1,\ldots, \mathcal{L}_h$, $h\in \mathbb{Z}_{\geq 1}$, be the distinct non-vertical line segments in the boundary of $\Delta_Q(\varphi(T))$, and denote by $\rho(\mathcal{L}_i)$ the slope of the segment $\mathcal{L}_i$. The Newton arc $\Gamma_Q(\varphi(T))$ of $\varphi(T)$ with respect to $Q$ can hence be uniquely described as $\Gamma_Q(\varphi(T))=\cup_{i=1}^{h} \mathcal{L}_i$, where the segments $\mathcal{L}_1,\ldots, \mathcal{L}_h$ are listed in order of strictly increasing slope, i.e., $\rho(\mathcal{L}_1) < \cdots < \rho(\mathcal{L}_h)$.

\begin{center}
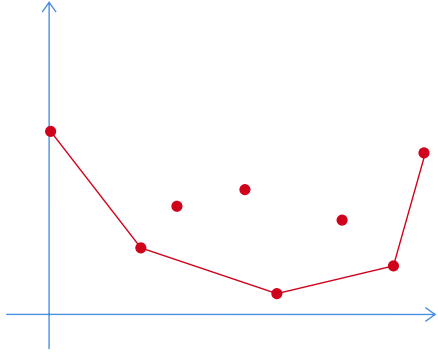
\begin{figure}[ht]
\adjustbox{scale=0.7}{%

\tikzset{every picture/.style={line width=0.75pt}} 

\begin{tikzpicture}[x=0.75pt,y=0.75pt,yscale=-1,xscale=1]

\draw [color={rgb, 255:red, 74; green, 144; blue, 226 }  ,draw opacity=1 ][line width=0.75]  (167.98,308.97) -- (476.98,309.03)(198.92,83.98) -- (198.88,333.98) (469.98,304.03) -- (476.98,309.03) -- (469.98,314.03) (193.92,90.98) -- (198.92,83.98) -- (203.92,90.98)  ;
\draw [color={rgb, 255:red, 208; green, 2; blue, 27 }  ,draw opacity=1 ]   (200,177) -- (265,261) ;
\draw [color={rgb, 255:red, 208; green, 2; blue, 27 }  ,draw opacity=1 ]   (265,261) -- (363,294) ;
\draw [color={rgb, 255:red, 208; green, 2; blue, 27 }  ,draw opacity=1 ]   (363,294) -- (447,274) ;
\draw [color={rgb, 255:red, 208; green, 2; blue, 27 }  ,draw opacity=1 ]   (447,274) -- (469,196) ;
\draw  [color={rgb, 255:red, 208; green, 2; blue, 27 }  ,draw opacity=1 ][fill={rgb, 255:red, 208; green, 2; blue, 27 }  ,fill opacity=1 ] (196.5,177) .. controls (196.5,175.07) and (198.07,173.5) .. (200,173.5) .. controls (201.93,173.5) and (203.5,175.07) .. (203.5,177) .. controls (203.5,178.93) and (201.93,180.5) .. (200,180.5) .. controls (198.07,180.5) and (196.5,178.93) .. (196.5,177) -- cycle ;
\draw  [color={rgb, 255:red, 208; green, 2; blue, 27 }  ,draw opacity=1 ][fill={rgb, 255:red, 208; green, 2; blue, 27 }  ,fill opacity=1 ] (261.5,261) .. controls (261.5,259.07) and (263.07,257.5) .. (265,257.5) .. controls (266.93,257.5) and (268.5,259.07) .. (268.5,261) .. controls (268.5,262.93) and (266.93,264.5) .. (265,264.5) .. controls (263.07,264.5) and (261.5,262.93) .. (261.5,261) -- cycle ;
\draw  [color={rgb, 255:red, 208; green, 2; blue, 27 }  ,draw opacity=1 ][fill={rgb, 255:red, 208; green, 2; blue, 27 }  ,fill opacity=1 ] (359.5,294) .. controls (359.5,292.07) and (361.07,290.5) .. (363,290.5) .. controls (364.93,290.5) and (366.5,292.07) .. (366.5,294) .. controls (366.5,295.93) and (364.93,297.5) .. (363,297.5) .. controls (361.07,297.5) and (359.5,295.93) .. (359.5,294) -- cycle ;
\draw  [color={rgb, 255:red, 208; green, 2; blue, 27 }  ,draw opacity=1 ][fill={rgb, 255:red, 208; green, 2; blue, 27 }  ,fill opacity=1 ] (443.5,274) .. controls (443.5,272.07) and (445.07,270.5) .. (447,270.5) .. controls (448.93,270.5) and (450.5,272.07) .. (450.5,274) .. controls (450.5,275.93) and (448.93,277.5) .. (447,277.5) .. controls (445.07,277.5) and (443.5,275.93) .. (443.5,274) -- cycle ;
\draw  [color={rgb, 255:red, 208; green, 2; blue, 27 }  ,draw opacity=1 ][fill={rgb, 255:red, 208; green, 2; blue, 27 }  ,fill opacity=1 ] (465.5,192.5) .. controls (465.5,190.57) and (467.07,189) .. (469,189) .. controls (470.93,189) and (472.5,190.57) .. (472.5,192.5) .. controls (472.5,194.43) and (470.93,196) .. (469,196) .. controls (467.07,196) and (465.5,194.43) .. (465.5,192.5) -- cycle ;
\draw  [color={rgb, 255:red, 208; green, 2; blue, 27 }  ,draw opacity=1 ][fill={rgb, 255:red, 208; green, 2; blue, 27 }  ,fill opacity=1 ] (287.5,231) .. controls (287.5,229.07) and (289.07,227.5) .. (291,227.5) .. controls (292.93,227.5) and (294.5,229.07) .. (294.5,231) .. controls (294.5,232.93) and (292.93,234.5) .. (291,234.5) .. controls (289.07,234.5) and (287.5,232.93) .. (287.5,231) -- cycle ;
\draw  [color={rgb, 255:red, 208; green, 2; blue, 27 }  ,draw opacity=1 ][fill={rgb, 255:red, 208; green, 2; blue, 27 }  ,fill opacity=1 ] (336.5,219) .. controls (336.5,217.07) and (338.07,215.5) .. (340,215.5) .. controls (341.93,215.5) and (343.5,217.07) .. (343.5,219) .. controls (343.5,220.93) and (341.93,222.5) .. (340,222.5) .. controls (338.07,222.5) and (336.5,220.93) .. (336.5,219) -- cycle ;
\draw  [color={rgb, 255:red, 208; green, 2; blue, 27 }  ,draw opacity=1 ][fill={rgb, 255:red, 208; green, 2; blue, 27 }  ,fill opacity=1 ] (406.5,241) .. controls (406.5,239.07) and (408.07,237.5) .. (410,237.5) .. controls (411.93,237.5) and (413.5,239.07) .. (413.5,241) .. controls (413.5,242.93) and (411.93,244.5) .. (410,244.5) .. controls (408.07,244.5) and (406.5,242.93) .. (406.5,241) -- cycle ;

\draw (239,269.4) node [anchor=north west][inner sep=0.75pt]  [font=\scriptsize,color={rgb, 255:red, 74; green, 144; blue, 226 }  ,opacity=1 ]  {$( i,v_{Q}( a_{i}))$};

\end{tikzpicture}
}
\caption{General shape of the Newton arc of a polynomial $\varphi(T)$ with respect to a place $Q$ of $F$. \label{fig:newtonarc:gen}}
\end{figure}
\end{center}

Let now $\mathcal{O}_Q$ be the DVR associated to the place $Q$. Then, given any $\varphi(T)\in F[T]$, there always exists some element $f\in F\setminus\{0\}$ such that $f\varphi(T)\in \mathcal{O}_Q$[T] and the valuation of at least one coefficient of $f\varphi(T)$ at $Q$ is zero. The Newton arc $\Gamma_Q(f\varphi(T))$ with respect to $Q$ of the polynomial $f\varphi(T)\in F[T]$ obtained in this way is then just a vertical translate of $\Gamma_Q(\varphi(T))$. In particular, the slopes of the segments of $\Gamma_Q(\varphi(T))$ are the same as those of the segments of $\Gamma_Q(f\varphi(T))$. Hence, to study the Newton arc of $\varphi(T)$ with respect to a place $Q$, we can always study instead the Newton arc of $f\varphi(T)\in \mathcal{O}_Q[T]$ with respect to $Q$. 

We henceforth assume, without loss of generality, that $\varphi(T)\in \mathcal{O}_Q[T]$. Note that we can rewrite $\varphi(T)$ as 
\begin{equation*}
    \varphi(T) = \sum_{i=0}^m c_i t_Q^{\alpha_i}T^i,
\end{equation*}
where $c_i\in \mathcal{O}_Q^*$ for all $i=0,\ldots, m$, $t_Q$ is a local parameter at $Q$ and $\alpha_i\in \mathbb{Z}_{\geq 0}$. Let $\mathcal{L}$ be the line segment of $\Gamma_Q(\varphi(T))$ with starting point $(i, \alpha_i)$ and end point $(j,\alpha_j)$. We define the following monic polynomial associated to $\mathcal{L}$
\begin{equation}
\label{eq:gammaL:def}
    \gamma_\mathcal{L}(T):=\bar{c}_j^{-1}\sum_{\ell \ : \ (\ell,\alpha_\ell)\in \mathcal{L}} \bar{c}_\ell T^{\ell-i} \in (\mathcal{O}_Q/Q)[T],
\end{equation}
where $\bar{c}_\ell$ is the the reduction of $c_\ell$ modulo $Q$, for all $\ell=0,\ldots, m$.

While the polynomial $\gamma_\mathcal{L}(T)$ depends on the choice of the local parameter $t_Q$, its degree $(j-i)$ does not depend on this choice. Moreover, with the notations introduced before, let $\rho(\mathcal{L})$ denote the slope of the segment $\mathcal{L}$. Then we can write uniquely $\rho(\mathcal{L}) = -a/b$, where $a,b\in \mathbb{Z}$, $b>0$ and $\mathrm{gcd} \ (a,b) = 1$. If $\mathcal{L}$ has starting point $(i, \alpha_i)$ and end point $(j,\alpha_j)$, we then have that 
\begin{equation*}
    -\frac{a}{b} = \frac{\alpha_\ell - \alpha_i}{\ell-i} 
\end{equation*}
for all $\ell$ such that $(\ell, \alpha_\ell)\in \mathcal{L}$, and since $\alpha_\ell - \alpha_i\in \mathbb{Z}$ and $\mathrm{gcd} \ (a,b) = 1$, we conclude that $b \mid (\ell-i)$ for all $\ell$. This means that $\gamma_\mathcal{L}(T)$ can be written as a polynomial in $T^b$, that is,
\begin{equation}
\label{eq:deltaL:def}
    \gamma_\mathcal{L}(T) = \delta_\mathcal{L}(T^b),
\end{equation}
where the polynomial $\delta_\mathcal{L}(T) \in (\mathcal{O}_Q/Q)[T]$ is defined by \cref{eq:deltaL:def}.

Like the polynomial $\gamma_\mathcal{L}(T)$, also the polynomial $\delta_\mathcal{L}(T)$ depends on the choice of the local parameter $t_Q$. However, it can be shown that the factorization pattern of $\delta_\mathcal{L}(T)$ is actually not affected by the choice of the local parameter, i.e., that the degree, separability and multiplicity of the irreducible factors of $\delta_\mathcal{L}(T)$ do not change, disregarding the choice of the local parameter at $Q$. This is of importance for the following result, which we use in \cref{subsec:d:bound}.

\begin{theorem}[\cite{Bnotes}]
    \label{thm:newtonarc:splitting}
    Let $F':=F(s)$ be an extension of $F$ of degree $[F':F]=m$. Furthermore, let $Q$ be a place of $F$ and $t_Q$ a local parameter at $Q$. Let $\varphi(T)\in F[T]$ be a polynomial of degree $m$ such that $\varphi(s)=0$. 

    Let $\mathcal{L}$ be a line segment of the Newton arc $\Gamma_Q(\varphi(T))$ of $\varphi(T)$ with respect to $Q$, with slope $\rho(\mathcal{L})= -a/b$, with $a,b\in \mathbb{Z}$, $b>0$ and $\mathrm{gcd} \ (a,b) = 1$. Let $\delta_{\mathcal{L},i}(T) \in (\mathcal{O}_Q/Q)[T]$ be an irreducible monic factor of $\delta_\mathcal{L}(T)$ and let $\psi_{\mathcal{L},i}(T)\in \mathcal{O}[T]$ be a monic polynomial such that
    \begin{equation*}
        \bar{\psi}_{\mathcal{L},i}(T) = \delta_{\mathcal{L},i}(T) \quad \mbox{and} \quad \mathrm{deg} \ \psi_{\mathcal{L},i}(T) = \mathrm{deg} \ \delta_{\mathcal{L},i}(T),
    \end{equation*}
    where $\bar{\psi}_{\mathcal{L},i}(T)$ denotes the polynomial in $(\mathcal{O}_Q/Q)[T]$ whose coefficients are the reduction modulo $Q$ of the coefficients of $\psi_{\mathcal{L},i}(T)$.

    Then there exists a place $P_{\mathcal{L},i}$ of $F'$ lying over $Q$ such that $bv_{P_{\mathcal{L},i}}(s) = a e(P_{\mathcal{L},i}\mid Q)$ and $\psi_{\mathcal{L},i}(t_Q^{-a}s^b)\in P_{\mathcal{L},i}$. Here $e(P_{\mathcal{L},i}\mid Q)$ denotes the ramification index of $P_{\mathcal{L},i}$ over $Q$.
    Moreover, all places $P_{\mathcal{L},i}$ obtained in this way are distinct.
\end{theorem}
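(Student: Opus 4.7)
The plan is to pass to the completion $\hat{F}_Q$ of $F$ at $Q$, where the Newton polygon theorem over a complete discretely valued field gives a first-level factorization of $\varphi(T)$ by slopes, and then apply Hensel's lemma after a suitable rescaling to refine that factorization according to the irreducible factors of $\delta_\mathcal{L}$. The resulting factors will then be translated into places of $F'$ lying over $Q$ via the standard isomorphism $F' \otimes_F \hat{F}_Q \cong \prod_{P\mid Q} \hat{F}'_P$.

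First I would invoke the reduction in the paragraph preceding the statement to assume $\varphi(T) \in \mathcal{O}_Q[T]$ with a coefficient of valuation $0$, so that the Newton arc is exactly the lower convex hull of the support set. Working in $\hat{F}_Q[T]$, the classical Newton polygon decomposition produces a factorization $\varphi(T) = \prod_\mathcal{L} \varphi_\mathcal{L}(T)$ indexed by the segments of $\Gamma_Q(\varphi(T))$, where $\deg \varphi_\mathcal{L}$ equals the horizontal length of $\mathcal{L}$ and every root of $\varphi_\mathcal{L}$ has valuation $a/b$ (in the unique extension of $v_Q$ to $\overline{\hat{F}_Q}$) when $\rho(\mathcal{L}) = -a/b$. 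Fix such a segment $\mathcal{L}$.

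Next I would rescale by the new variable $V := t_Q^{-a} T^b$, which avoids introducing a $b$-th root of $t_Q$. Writing $\varphi_\mathcal{L}(T) = \sum_{(\ell,\alpha_\ell)\in \mathcal{L}} c_\ell t_Q^{\alpha_\ell} T^\ell + (\text{terms strictly above }\mathcal{L})$ and using the identity $-\rho(\mathcal{L})(\ell - i) = \alpha_\ell - \alpha_i$ along the segment, the substitution produces a polynomial in $V$ whose reduction modulo $Q$ is exactly $\delta_\mathcal{L}(V)$ (up to a unit). Since $\delta_{\mathcal{L},i}$ is coprime to $\delta_\mathcal{L}/\delta_{\mathcal{L},i}$ in $(\mathcal{O}_Q/Q)[V]$, Hensel's lemma lifts this coprime factorization to $\widehat{\mathcal{O}}_Q[V]$, yielding a factor $\Psi_{\mathcal{L},i}(V)$ whose reduction is a power of $\delta_{\mathcal{L},i}$. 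Undoing the substitution gives a factor $\Phi_{\mathcal{L},i}(T)$ of $\varphi_\mathcal{L}(T)$ in $\hat{F}_Q[T]$, and the root $s$ in $F'$ corresponds, via $F' \otimes_F \hat{F}_Q \cong \prod_{P\mid Q} \hat{F}'_P$, to a uniquely determined place $P_{\mathcal{L},i}$ with $\Phi_{\mathcal{L},i}$ the minimal-polynomial-type factor picking out that component.

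To verify the two asserted properties, the valuation identity $b v_{P_{\mathcal{L},i}}(s) = a\, e(P_{\mathcal{L},i}\mid Q)$ follows directly from $v_{P_{\mathcal{L},i}} \mid_{F} = e(P_{\mathcal{L},i}\mid Q) v_Q$ combined with the fact that every root of $\varphi_\mathcal{L}$ has $v_Q$-valuation $a/b$. This same identity shows $t_Q^{-a} s^b$ is a unit at $P_{\mathcal{L},i}$, and its image in the residue field is a root of $\delta_{\mathcal{L},i}$ by the Hensel construction, which is exactly the statement $\psi_{\mathcal{L},i}(t_Q^{-a} s^b)\in P_{\mathcal{L},i}$. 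Distinctness of the $P_{\mathcal{L},i}$ follows from the bijection between factors of $\varphi$ in $\hat{F}_Q[T]$ and places of $F'$ over $Q$: different segments give roots of different $v_Q$-valuations, hence different places, and within one segment the coprimality of the $\delta_{\mathcal{L},i}$ in the residue field separates the Hensel factors. The main obstacle I expect is bookkeeping around the rescaling $V = t_Q^{-a} T^b$: one must check that the reduction of the rescaled polynomial really is $\delta_\mathcal{L}(V)$ (not just $\gamma_\mathcal{L}(T)$), which requires using the condition $b\mid (\ell - i)$ along the segment, and that the Hensel lift is carried out in the variable $V$ without needing fractional powers of $t_Q$.
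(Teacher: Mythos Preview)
The paper does not prove this theorem; it is quoted from the unpublished notes \cite{Bnotes} and used as a black box, so there is no in-paper argument to compare against.

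Your strategy---complete at $Q$, factor $\varphi$ by Newton-polygon slopes over $\hat F_Q$, refine each pure-slope factor by Hensel, and read off places via $F'\otimes_F\hat F_Q\cong\prod_{P\mid Q}\hat F'_P$---is the standard route and is essentially correct. Two points need tightening. First, the step ``substitute $V=t_Q^{-a}T^b$ to get a polynomial in $V$'' is not a polynomial change of variables in the direction you need: the terms of $\varphi_\mathcal{L}(T)$ lying strictly above $\mathcal{L}$ need not have $T$-exponent divisible by $b$, so $\varphi_\mathcal{L}$ does not literally become a polynomial in $V$. The clean fix is either to adjoin a $b$-th root $\pi$ of $t_Q$ (a totally ramified extension, so the residue field is unchanged), substitute $T=\pi^aU$ to flatten the polygon, and run Hensel on the reduction $\gamma_\mathcal{L}(U)=\delta_\mathcal{L}(U^b)$; or to argue directly in $F'$: for a place $P\mid Q$ with $v_P(s)/e(P\mid Q)=a/b$, divide the relation $\varphi(s)=0$ by $t_Q^{\alpha_i}s^{i}$ (left endpoint of $\mathcal{L}$), observe that off-segment terms have strictly positive $P$-valuation, and conclude that the residue of $t_Q^{-a}s^b$ is a root of $\delta_\mathcal{L}$. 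Second, when $\delta_{\mathcal{L},i}$ occurs in $\delta_\mathcal{L}$ with multiplicity $m_i>1$, the coprime factorization you feed to Hensel must be $\delta_\mathcal{L}=\delta_{\mathcal{L},i}^{m_i}\cdot(\delta_\mathcal{L}/\delta_{\mathcal{L},i}^{m_i})$, not $\delta_{\mathcal{L},i}\cdot(\delta_\mathcal{L}/\delta_{\mathcal{L},i})$; the lifted factor may then split further over $\hat F_Q$, but any irreducible piece of it still yields a place with the two stated properties, which is all the theorem claims.
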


\cref{thm:newtonarc:splitting} has, as a consequence, the following corollary, which we make use of in the discussion in \cref{subsec:d:bound}.

\begin{corollary}[\cite{Bnotes}]
    \label{cor:newtonarc:splitting}
    Let $F':=F(s)$ be an extension of $F$ of degree $[F':F]=m$. Furthermore, let $Q$ be a place of $F$ and $t_Q$ a local parameter at $Q$. Let $\varphi(T)\in F[T]$ be a polynomial of degree $m$ such that $\varphi(s)=0$. 

    If for each edge $\mathcal{L}$ of the Newton arc $\Gamma_Q(\varphi(T))$ the polynomial $\delta_\mathcal{L}(T)$ is squarefree as an element of $(\mathcal{O}_Q/Q)[T]$, then for each edge $\mathcal{L}$ and for each irreducible factor $\delta_{\mathcal{L},i}(T)$ of $\delta_\mathcal{L}(T)$ there exists exactly one place $P_{\mathcal{L},i}$ of $F'$ lying over $Q$ such that $v_{P_{\mathcal{L},i}}(s) + \rho(\mathcal{L})e(P_{\mathcal{L},i} \mid Q) = 0$ and $\psi_{\mathcal{L},i}(t_Q^{-a}s^b)\in P_{\mathcal{L},i}$, where $a,b\in \mathbb{Z}$, $b>0$, $\mathrm{gcd} \ (a,b) = 1$ and $\rho(\mathcal{L})=-a/b$. Here, with notations as in \cref{thm:newtonarc:splitting}, $\psi_{\mathcal{L},i}(T)\in \mathcal{O}[T]$ is a monic polynomial such that
    \begin{equation*}
        \bar{\psi}_{\mathcal{L},i}(T) = \delta_{\mathcal{L},i}(T) \quad \mbox{and} \quad \mathrm{deg} \ \psi_{\mathcal{L},i}(T) = \mathrm{deg} \ \delta_{\mathcal{L},i}(T).
    \end{equation*}
    Furthermore, the place $P_{\mathcal{L},i}$ satisfies 
    \begin{equation*}
        e(P_{\mathcal{L},i} \mid Q) = b \quad \mbox{and} \quad f(P_{\mathcal{L},i} \mid Q) = \mathrm{deg} \ \delta_{\mathcal{L},i}(T),
    \end{equation*}
    where $e(P_{\mathcal{L},i} \mid Q)$ denotes the ramification index and $f(P_{\mathcal{L},i} \mid Q)$ denotes the relative degree of $P_{\mathcal{L},i}$ over $Q$.
    All places of $F'$ lying over $Q$ are obtained in this way.
\end{corollary}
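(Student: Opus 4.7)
The plan is to combine Theorem~\ref{thm:newtonarc:splitting} with the fundamental identity $\sum_{P \mid Q} e(P \mid Q)\, f(P \mid Q) = [F':F] = m$ for finite extensions of one-variable function fields, and to use the squarefree hypothesis to force the inequalities produced by that theorem to be equalities.

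First, I would invoke Theorem~\ref{thm:newtonarc:splitting} to produce, for every edge $\mathcal{L}$ of $\Gamma_Q(\varphi(T))$ (with slope $-a/b$ in lowest terms) and every irreducible monic factor $\delta_{\mathcal{L},i}$ of $\delta_\mathcal{L}$, a place $P_{\mathcal{L},i}$ of $F'$ above $Q$ satisfying the stated conditions; these places are pairwise distinct. Two inequalities then drop out: from $\gcd(a,b)=1$ and $b\, v_{P_{\mathcal{L},i}}(s) = a\, e(P_{\mathcal{L},i} \mid Q)$ one gets $b \mid e(P_{\mathcal{L},i} \mid Q)$, hence $e(P_{\mathcal{L},i}\mid Q) \geq b$; and since $\psi_{\mathcal{L},i}(t_Q^{-a}s^b) \in P_{\mathcal{L},i}$, the residue of $t_Q^{-a}s^b$ modulo $P_{\mathcal{L},i}$ is a root of the irreducible polynomial $\delta_{\mathcal{L},i} \in (\mathcal{O}_Q/Q)[T]$, so the residue field of $P_{\mathcal{L},i}$ contains a copy of $(\mathcal{O}_Q/Q)[T]/(\delta_{\mathcal{L},i})$, giving $f(P_{\mathcal{L},i}\mid Q) \geq \deg \delta_{\mathcal{L},i}$.

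Summing these products over all pairs $(\mathcal{L},i)$ and using the squarefree hypothesis (so that $\sum_i \deg \delta_{\mathcal{L},i} = \deg \delta_\mathcal{L}$) one obtains
\[
\sum_{\mathcal{L},i} e(P_{\mathcal{L},i}\mid Q)\, f(P_{\mathcal{L},i}\mid Q) \;\geq\; \sum_{\mathcal{L}} b_\mathcal{L}\, \deg \delta_\mathcal{L} \;=\; \sum_{\mathcal{L}} \deg \gamma_\mathcal{L},
\]
where the last equality uses $\gamma_\mathcal{L}(T) = \delta_\mathcal{L}(T^{b_\mathcal{L}})$. The right-hand side equals the total horizontal length of $\Gamma_Q(\varphi(T))$, which is $m$ as soon as $\varphi(0)\neq 0$; and this holds because $s$ generates a degree-$m$ extension, so any degree-$m$ polynomial vanishing at $s$ is a scalar multiple of the minimal polynomial of $s$, whose constant term is nonzero whenever $s\neq 0$.

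Combining with the fundamental identity $\sum_{P \mid Q} e(P \mid Q)\, f(P \mid Q) = m$ forces equality throughout: the places $P_{\mathcal{L},i}$ exhaust all places of $F'$ above $Q$, each lower bound $e(P_{\mathcal{L},i}\mid Q) \geq b$ and $f(P_{\mathcal{L},i}\mid Q) \geq \deg \delta_{\mathcal{L},i}$ becomes an equality, and the place attached to each $(\mathcal{L},i)$ is unique. For uniqueness, any other place $P$ over $Q$ satisfying the conditions of the corollary must, by exhaustion, coincide with some $P_{\mathcal{L}',i'}$; matching the ratio $v_P(s)/e(P\mid Q) = \rho(\mathcal{L})$ forces $\mathcal{L}'=\mathcal{L}$ (distinct edges have distinct slopes), and the residue of $t_Q^{-a}s^b$ annihilating two coprime irreducibles $\delta_{\mathcal{L},i}$ and $\delta_{\mathcal{L},i'}$ then forces $i'=i$. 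The main obstacle I foresee is the combinatorial bookkeeping that identifies $\sum_{\mathcal{L}}\deg\gamma_\mathcal{L}$ with $m$, together with a clean justification that $\varphi$ has nonzero constant term so that the Newton arc actually spans the full horizontal interval $[0,m]$; once these are in place, the fundamental identity does the rest.
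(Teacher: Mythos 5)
The paper itself offers no proof of \cref{cor:newtonarc:splitting}: both \cref{thm:newtonarc:splitting} and this corollary are quoted verbatim from Beelen's unpublished notes and used as black boxes, so there is no internal proof to compare against. On its own merits your argument is correct and is the standard way to upgrade the existence statement of \cref{thm:newtonarc:splitting} to the full description of the fiber above $Q$: the divisibility $b \mid e(P_{\mathcal{L},i}\mid Q)$ coming from $b\,v_{P_{\mathcal{L},i}}(s)=a\,e(P_{\mathcal{L},i}\mid Q)$ and $\gcd(a,b)=1$, the residue-field estimate $f(P_{\mathcal{L},i}\mid Q)\geq\deg\delta_{\mathcal{L},i}$ from the fact that the residue of $t_Q^{-a}s^b$ is a root of the irreducible $\delta_{\mathcal{L},i}$, and the bookkeeping $\sum_{\mathcal{L}}\deg\gamma_{\mathcal{L}}=m$ (which relies on $\varphi$ being, up to a scalar, the minimal polynomial of $s\neq 0$, hence with nonzero constant term) together with the fundamental identity $\sum_{P\mid Q} e\,f=m$ squeeze all inequalities into equalities and show the list of places is exhaustive. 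The only issue I'd flag is cosmetic: you write $v_P(s)/e(P\mid Q)=\rho(\mathcal{L})$ when the relation $v_P(s)+\rho(\mathcal{L})e(P\mid Q)=0$ gives $v_P(s)/e(P\mid Q)=-\rho(\mathcal{L})$, but this sign slip does not affect the conclusion that distinct edges have distinct slopes and hence $\mathcal{L}$ is pinned down; the irreducibility/coprimality of the $\delta_{\mathcal{L},i}$ then pins down $i$, completing the uniqueness. You should also note explicitly (as the paper's \cref{def:newtonarc} implies) that the Newton arc's horizontal span is $[a,m]$ with $a$ the $T$-adic valuation of $\varphi$, so the nonvanishing of $\varphi(0)$ is precisely what makes $a=0$ and the total length $m$.
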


\subsection{Fibered surfaces}
In this subsection, we collect some definitions on fibered surfaces that provide the essential background for \cref{sec:genus1:fibrations}.

\begin{definition}
    \label[definition]{def:fibration:surfaces}
    Given a smooth, projective, and algebraic surface $X$, a smooth, projective and algebraic curve $B$, a proper morphism with connected general fiber $\pi: X \longrightarrow B$ is called a \emph{fibration}. A morphism $\sigma: B \longrightarrow X$ such that $\pi\circ \sigma= \mathrm{id}_B$, where $\mathrm{id}_B$ is the identity morphism on $B$, is called a \emph{section} of the fibration $\pi$. The curve $B$ is called the \emph{base} of the fibration $\pi$.
\end{definition}

\begin{definition}
Given a fibration $\pi: X \longrightarrow B$, a \emph{multisection for $\pi$} is a geometrically integral curve $C \subseteq X$ such that $\pi|_{C}: C \longrightarrow B$ is finite of degree at least 2 and flat. 
The \emph{degree} of a multisection $C$ is the degree of the map $\pi_{|C}$ above.
\end{definition}

Throughout this subsection, we reserve the letters $X, B, \pi$ for an algebraic surface, an algebraic curve, and a fibration $\pi:X \longrightarrow B$, respectively. 

\begin{example}
Among the simplest class of fibered surfaces are ruled surfaces. These are equipped with a morphism $\pi: X \longrightarrow B$ such that $\pi^{-1}(t)\simeq \mathbb{P}^1$ for all $t\in B$.

If, moreover, the base of the fibration $B\simeq \mathbb{P}^1$, then the surface is rational, i.e., it is birational to $\mathbb{P}^2$. These are called \emph{Hirzebruch surfaces}. There are infinitely many such surfaces, denoted by $\mathbb{F}_n$, where $n$ is the natural number such that there is a unique rational section to the fibration with self-intersection $-n$. 

LRCs on Hirzebruch surfaces were studied in particular in \cite[Section V]{SVAV21}.
\end{example}

One of the most prominent classes of fibered surfaces are \emph{elliptic surfaces}, i.e., surfaces endowed with a fibration of genus one curves that admits a section. Indeed, such a structure allows for a twofold description of the same object, namely via the fibers of the morphism and as an elliptic curve over the function field of the base of the fibration.

\begin{definition}\label[definition]{def: elliptic surface}
A triple $(X,B,\pi)$ is called an \emph{elliptic surface} if the following hold:

\begin{itemize}
\item[a)] $\pi^{-1}(t)$ is a smooth curve of genus 1, for all but finitely many $t \in B$;
\item[b)] there is a section $\sigma: B \longrightarrow X$;
\item[c)] there is at least one singular fiber for $\pi$.
\end{itemize}

The set of sections of $\pi$ is an abelian group which we denote by $\mathrm{MW}(X,\pi)$.
\end{definition}

Note that condition $c)$ implies that ruled surfaces of the type $E \times B$, where $E$ is an elliptic curve, are excluded from our definition of elliptic surface.
If $\pi:X \longrightarrow B$ satisfies condition a), but it does not necessarily admit a section, then it is called a \emph{genus 1 fibration}.

\begin{example}(Elliptic surfaces with a rational base)

Let $(X,\mathbb{P}^1,\pi)$ be an elliptic surface. Then $X$ admits a model with an equation as follows, which is called a \emph{Weierstrass equation}:
\begin{equation}\label{eq:Weierstrass}
y^2+a_1(t,s)xy+a_3(t,s)y=x^3+a_2(t,s)x^2+a_4(t,s)x+ a_6(t,s)
\end{equation}
where $a_i(t,s)$ are homogeneous polynomials of degrees $im$, respectively, and such that at least one $v(a_i(t,s))<i$ at each place of $\mathbb{P}^1$.
\end{example}

If the characteristic of the field of definition is not 2 nor 3 then we can always transform \cref{eq:Weierstrass} in a \emph{short Weierstrass} equation of the form
\begin{equation*}
    y^2=x^3+a(t,s)x+b(t,s),
\end{equation*}
with $\deg a(t,s)= 4m$, $\deg b(t,s)=6m$ and such that $v(a(t,s))<4$ or $v(b(t,s))<6$. 
Given an elliptic surface with an equation in this form, the finitely many singular fibers lie above $[t:s] \in \mathbb{P}^1$ such that $4a(t,s)^3+27b(t,s)^2=0$.

Among surfaces with fibrations, several admit multiple fibrations. As already mentioned in \cref{sec:intro}, this extra structure is also useful in coding theory when constructing LRCs as AG codes from such surfaces. 

\begin{example}
The simplest example of algebraic surface with multiple fibrations is the ruled surface $\mathbb{P}^1\times \mathbb{P}^1$. The two rulings are two evident fibrations on rational curves on the surface.
LRCs with availability on $\mathbb{P}^1$-bundles were constructed in \cite{AAAOAVA24}.
\end{example}

\begin{example}
The only class of surfaces that are not of product type and admit multiple elliptic fibrations are \emph{K3 surfaces}, \cite[Lemma 12.18]{ShiodaSchuett}. For instance, the surface 
\begin{equation}\label{eq: K3 example}
  \E_3: y^2 = x^3 - x^2(t^4+1) + xt^4.
\end{equation}
admits two genus 1 fibrations that are evident from the equation above, namely one over the $t$-line, which we denote by $\Pi_t$, and a second one over the $x$-line, which we denote by $\Pi_x$ (see also \cref{rem:geometric:intuition}, where with a slight abuse we use the same notation for $\Pi_t$ and $\Pi_x$). The former is an elliptic fibration, while the latter is a genus 1 fibration without section and therefore it cannot be put in Weierstrass form over the $x$-line.
\end{example}

\section{Code construction}
\label{sec:code:construction}

In this section, we present a construction of LRCs with availability $2$ from a family of fibered surfaces. We start by introducing the family of surfaces considered, and by discussing how we choose the set of evaluation points for the codes.

Let $r\geq 3$ be an odd integer, let $q$ be a prime power such that $q \equiv 1\pmod{r+1}$, and let $\Fq$ be the finite field with $q$ elements. Let
\begin{equation}
\label{eq:Erdef}
    \mathcal{E}_r: y^2 = x^3 - x^2(t^{r+1}+1) + xt^{r+1}
\end{equation}
and
\begin{equation}
\label{eq:Mrdef}
    \mathcal{M}_r: \begin{cases}
        y^2 = x^3 - x^2(t^{r+1}+1) + xt^{r+1}\\
        y=x^{\frac{r+1}{2}}+1\\
    \end{cases}
\end{equation}
in $\mathbb{A}^2_{(x,y)}\times \mathbb{A}^1_{t}$ over $\F_{q^m}$, where $m\in \mathbb{Z}_{\geq 1}$. In the following discussion, we consider compactifications $\bar{\E}_r$ and $\bar{\M}_r$ of, respectively, $\E_r$ and $\M_r$ in $\P_{[x:y:z]}^2 \times \ \P_{[t:w]}^1$. We denote by $\Fqm(\MM_r)$ the ($\Fqm$-rational) function field of $\MM_r$, which in the affine chart where $z\neq 0$ and $w\neq 0$ can be described as $\Fqm(\MM_r)=\Fqm(t/w,x/z)$.

\begin{remark}
    \label[remark]{rem:functionfield:notation}
    For less cumbersome notations, we henceforth adopt the slightly abusive notation $\Fqm(\MM_r)=\Fqm(t,x)$, in the chart $z\neq 0$ and $w\neq 0$, i.e., we simply write $x$ instead of $x/z$ and $t$ instead of $t/w$.
\end{remark}

\begin{remark}
\label[remark]{rem:characteristic}
    The assumption $r$ odd and $q\equiv 1 \pmod{r+1}$ implies in particular that $\mathrm{char}(\Fqm)\neq 2$. 
\end{remark}

Our goal is to define an evaluation code on $\E_r$ with locality $r$ and availability $\a = 2$, see \cref{def:LRC}. 

Let $\zeta$ be a primitive $r+1$-th root of unity in $\Fqm$. The existence of $\zeta\in \Fqm$ is guaranteed by the assumption $q \equiv 1\pmod{r+1}$, which actually ensures that $\zeta\in \Fq$. To introduce more convenient notations for the discussion below, we start by rewriting \cref{eq:Mrdef} as follows. Let
\begin{equation}
\label{eq:PPt}
    \mathcal{P}_{t}(T):= T^{r+1} + 2T^{\frac{r+1}{2}} - T^3 + T^2(t^{r+1} + 1) - Tt^{r+1} + 1 \in \Fqm(t)[T],
\end{equation}
then \cref{eq:Mrdef} can be rewritten as
\begin{equation}
\label{eq:Mrdef:var}
    \mathcal{M}_r: \begin{cases}
        \mathcal{P}_{t}(x) = 0\\
        y=x^{\frac{r+1}{2}}+1.\\
    \end{cases}
\end{equation}
Note that, for any fixed $\bar{t}\in \Fqm$, the polynomial $\PP_{\bar{t}}(T)$ is an element of $\Fqm[T]$. Moreover, the polynomial $\PP_t(T)$ is irreducible over $\Fqm(t)$. Indeed, let $u:=(-(x^{(r+1)/2} + 1)^2 - x^2 + x^3)/(x(x-1))$. Note first that the function field extension $\Fqm(t,x)/\Fqm(x)$, with defining equation $t^{r+1}=u$, is a Kummer extension of degree $r+1$, which directly follows from the fact that $q \equiv 1\pmod{r+1}$ and that in $\Fqm(x)$ the valuation of $u$ at the place $(x=0)$ is  $v_{(x=0)}(u)=-1$. It is then not difficult to see that 
\begin{equation*}
    \mathrm{deg} \ (t^{r+1})_\infty^{\Fqm(t,x)}= 2(r+1) + r^2 - 1 = (r+1)^2.
\end{equation*}
 Hence, since $[\Fqm(t,x):\Fqm(t^{r+1})]=\mathrm{deg} \ (t^{r+1})_\infty^{\Fqm(t,x)}=(r+1)^2$ and $[\Fqm(t,x):\Fqm(x)]=r+1$, we have that $[\Fqm(x):\Fqm(t^{r+1})]=r+1$, which implies in particular that the polynomial $\PP_t(T)$ defining the extension is irreducible over $\Fqm(t^{r+1})$. With a similar argument, noting that $[\Fqm(t,x):\Fqm(t)]=\mathrm{deg} \ (t)_\infty^{\Fqm(t,x)}=r+1$, it also follows that $\PP_t(T)$ is irreducible over $\Fqm(t)$. See \cref{fig:subfields} for a diagram summarising this discussion.

\begin{figure}[ht]
\adjustbox{center}{
\begin{tikzcd}
                                             & {\mathbb{F}_{q^m}(t,x)} \arrow[rdd, "r+1", no head] \arrow[ldd, "r+1"', no head] &                                             \\
                                             &                                                                              &                                             \\
\mathbb{F}_{q^m}(t) \arrow[rdd, "r+1"', no head] &                                                                              & \mathbb{F}_{q^m}(x) \arrow[ldd, "r+1", no head] \\
                                             &                                                                              &                                             \\
                                             & \mathbb{F}_{q^m}(t^{r+1})                                                        &                                            
\end{tikzcd}
}
\caption{Some subfields of $\Fqm(t,x)$. \label{fig:subfields}}
\end{figure}

With the notations just introduced above, let now $\bar{\E}_r$ and $\bar{\M}_r$ denote, respectively, the compactification of $\E_r$ and $\M_r$ in $\P_{[x:y:z]}^2 \times \ \P_{[t:w]}^1$.

For a point $P:=[\bar{x}:\bar{x}^{\frac{r+1}{2}}+1:1;\bar{t}:1]\in \MM_r(\Fqm)$ we define two associated subsets of $\MM_r(\Fqm)$
\begin{align}
\begin{split}
\label{eq:recsubsets:def}
    H_P&:=\{[\bar{x}:\bar{x}^{\frac{r+1}{2}}+1:1;\zeta^i\bar{t}:1]\in \MM_r(\Fqm)\mid \ i=1,\ldots,r\}, \\ V_P&:=\{[x:x^{\frac{r+1}{2}}+1:1;\bar{t}:1]\in \MM_r(\Fqm) \mid \ \PP_{\bar{t}}(x)=0 \ \mbox{and} \ x\neq \bar{x}\}.
\end{split}
\end{align}

\begin{definition}
    \label[definition]{defn:nicepoint}
    A point $P:=[\bar{x}:\bar{x}^{\frac{r+1}{2}}+1:1;\bar{t}:1]\in \MM_r(\Fqm)$ is said to be \emph{nice} if its associated subsets $H_P$ and $V_P$ defined in \cref{eq:recsubsets:def} satisfy
    \begin{equation*}
    |H_P|=|V_P|=r.
\end{equation*}
\end{definition}
As evaluation points of the code, we wish to select a subset of affine $\Fqm$-rational points of $\MM_r$ that are \emph{nice}.

Observe that, by Kummer theory, from \cref{eq:Mrdef} it follows immediately that if $\bar{x}\neq 0,1$ and $(\bar{x}^{(r+1)/2} + 1)^2 + \bar{x}^2 - \bar{x}^3 \neq 0$, then $|H_P|=r$. On the other hand, it is less straightforward to understand when $|V_P|=r$. Indeed, this is equivalent to studying the problem of whether, given $\bar{t}\in \Fqm$, the polynomial 
\begin{equation*}
    \mathcal{P}_{\bar{t}}(T):= T^{r+1} + 2T^{\frac{r+1}{2}} - T^3 + T^2(\bar{t}^{r+1} + 1) - T\bar{t}^{r+1} + 1 \in \Fqm[T]
\end{equation*}
splits into $r+1$ distinct linear factors over $\Fqm$. The following lemma gives a sufficient condition on $m$ to guarantee the existence of at least one $\bar{t}\in \Fqm\setminus\{0\}$ such that $\mathcal{P}_{\bar{t}}(T)$ splits completely over $\Fqm$. Note that we disregard $\bar{t} = 0$ because it would correspond to points for which $|H_P| < r$. 

\begin{lemma}
    \label[lemma]{lemma:existence:of:evalpoints}
    Let $q$, $r$, $\M_r$ and $\PP_t(T)$ as defined above. For fixed $q$ and $r$, let $\mathfrak{M}\in \mathbb{Z}_{\geq 1}$ be such that the following inequalities are satisfied: 
    \begin{align}
    \label{eq:m:condition}
    \begin{cases}
        q^\mathfrak{M} \geq 2g(E)\left(q^{\mathfrak{M}/2} + 1\right),\\
        \frac{q^\mathfrak{M}}{\mathfrak{M}(r+1)!} - \frac{2}{\mathfrak{M}(r+1)!} \cdot \left(((r+1)! + g(E))\cdot q^{\mathfrak{M}/2} + (r+1)! \cdot q^{\mathfrak{M}/4} + g(E) + (r+1)!\right) + 1 \geq 3,
    \end{cases}
    \end{align}
    where $E$ is the Galois closure of $\F_{q^{\mathfrak{M}}}(x)/\F_{q^{\mathfrak{M}}}(t^{r+1})$ and $g(E)$ is the genus of $E$. Then, 
    there exists at least one place $\mathfrak{P}$ of $\F_{q^{\mathfrak{M}}}(t)$, different from the zero and the pole of $t$, that splits completely in the extension $\F_{q^{\mathfrak{M}}}(t,x)/\F_{q^{\mathfrak{M}}}(t)$. Moreover, let $\mathfrak{n} := \mathrm{deg} \ \mathfrak{P} \geq 1$ and let $m := \mathfrak{M} \cdot \mathfrak{n}$. Then there exists
    $\bar{t}\in \Fqm \setminus\{0\}$ such that $\mathcal{P}_{\bar{t}}(T)$ splits into $r+1$ distinct factors over $\Fqm$.
\end{lemma}
\begin{proof}
     Let $(t^{r+1}=a)$, $a\in \F_{q^\mathfrak{M}} \cup \{\infty\}$, be any rational place of $\F_{q^{\mathfrak{M}}}(t^{r+1})$, and consider the function field extension $\F_{q^{\mathfrak{M}}}(x)/\F_{q^{\mathfrak{M}}}(t^{r+1})$, where $\PP_t(x)=0$ is the defining equation. 

    We start by proving the first part of the lemma. To this aim, we show that, if \cref{eq:m:condition} is satisfied, then there exists a place $(t^{r+1}=a)$, with $a\in \F_{q^\mathfrak{M}} \setminus \{0\}$, which splits completely in $\F_{q^{\mathfrak{M}}}(x)/\F_{q^{\mathfrak{M}}}(t^{r+1})$. Indeed, by \cite[Proposition 3.9.6]{Sti} (see \cref{fig:subfields}), this implies that every extension $\mathfrak{P} \mid (t^{r+1}=a)$ in $\F_{q^{\mathfrak{M}}}(t)$ splits completely in $\F_{q^{\mathfrak{M}}}(t,x)/\F_{q^{\mathfrak{M}}}(t)$.
     
    Let $E$ be the Galois closure of $\F_{q^{\mathfrak{M}}}(x)/\F_{q^{\mathfrak{M}}}(t^{r+1})$. By the Chebotarev Density Theorem (see for instance \cite[Theorem 9.13B]{Rosen}), for each $\mathfrak{M}\in \mathbb{Z}_{\geq 1}$ the cardinality of the set of rational places of $\F_{q^{\mathfrak{M}}}(t^{r+1})$ that split completely in $E$ is $q^\mathfrak{M}/(\mathfrak{M}\cdot |G|) + O(q^{\mathfrak{M}/2}/\mathfrak{M})$, where $G$ is the Galois group of $E/\F_{q^{\mathfrak{M}}}(t^{r+1})$. We consider the explicit estimate of the error term $O(q^{\mathfrak{M}/2}/\mathfrak{M})$ in \cite[Proposition 7.4.8]{FJ05}, from which the following lower bound for the number $N$ of rational places that split completely in $E/\F_{q^{\mathfrak{M}}}(t^{r+1})$ holds:
        \begin{equation}
        \label{eq:N}
            N \geq \frac{q^\mathfrak{M}}{\mathfrak{M}|G|} - \frac{2}{\mathfrak{M}|G|} \cdot \left((|G| + g(E))\cdot q^{\mathfrak{M}/2} + |G| \cdot q^{\mathfrak{M}/4} + g(E) + |G|\right) + 1.
        \end{equation}

    Combining the fact that $|G|\leq (r+1)!$ with the first inequality in \cref{eq:m:condition}, we have that
    \begin{align*}
        \frac{q^\mathfrak{M}}{\mathfrak{M}|G|} - \frac{2}{\mathfrak{M}|G|} \cdot \left((|G| + g(E))\cdot q^{\mathfrak{M}/2} + |G| \cdot q^{\mathfrak{M}/4} + g(E) + |G|\right) \\ \geq \frac{q^\mathfrak{M}}{\mathfrak{M}(r+1)!} - \frac{2}{\mathfrak{M}(r+1)!} \cdot \left(((r+1)! + g(E))\cdot q^{\mathfrak{M}/2} + (r+1)! \cdot q^{\mathfrak{M}/4} + g(E) + (r+1)!\right).
    \end{align*}
    Hence, from the second inequality in \cref{eq:m:condition} and from \cref{eq:N} we obtain
    \begin{equation*}
        N \geq \frac{q^\mathfrak{M}}{\mathfrak{M}|G|} - \frac{2}{\mathfrak{M}|G|} \cdot \left((|G| + g(E))\cdot q^{\mathfrak{M}/2} + |G| \cdot q^{\mathfrak{M}/4} + g(E) + |G|\right) + 1 \geq 3,
    \end{equation*}
    which guarantees that there is at least one rational place $\mathfrak{p}:=(t^{r+1}=a)$ of $\F_{q^{\mathfrak{M}}}(t^{r+1})$ different from the zero and the pole of $t$ (i.e., $a\in \F_{q^\mathfrak{M}} \setminus \{0\}$) which is completely splitting in $E/\F_{q^{\mathfrak{M}}}(t^{r+1})$. By the observations above, this proves the first part of the statement, since any place of $\F_{q^{\mathfrak{M}}}(t^{r+1})$ which splits completely in $E/\F_{q^{\mathfrak{M}}}(t^{r+1})$ must also split completely in $\F_{q^{\mathfrak{M}}}(x)/\F_{q^{\mathfrak{M}}}(t^{r+1})$.

    Let $\mathfrak{P}$ be an extension of $\mathfrak{p}$ in $\F_{q^{\mathfrak{M}}}(t)/\F_{q^{\mathfrak{M}}}(t^{r+1})$. By what we have just shown and by the discussion above, $\mathfrak{P}$ splits completely in $\F_{q^{\mathfrak{M}}}(t,x)/\F_{q^{\mathfrak{M}}}(t)$.
    To prove the second part of the lemma, it is now sufficient to note the following. If $\mathfrak{n} := \mathrm{deg} \ \mathfrak{P}$ and $m := \mathfrak{M} \cdot \mathfrak{n}$, then $\mathfrak{P}$ splits completely in the degree $\mathfrak{n}$ constant field extension $\Fqm\F_{q^{\mathfrak{M}}}(t)/\F_{q^{\mathfrak{M}}}(t) = \Fqm(t)/\F_{q^{\mathfrak{M}}}(t)$ (see \cite[Lemma 5.1.9]{Sti}). This hence implies that there is a rational place of $\Fqm(t)$, namely one of the $\mathfrak{n}$ extensions $\mathfrak{Q}_1, \ldots, \mathfrak{Q}_{\mathfrak{n}}$ of $\mathfrak{P}$ in $\Fqm(t)/\F_{q^{\mathfrak{M}}}(t)$, that splits completely in $\Fqm(t,x)/\Fqm(t)$ (in fact, all the $\mathfrak{n}$ extensions of $\mathfrak{P}$ split completely in $\Fqm(t,x)/\Fqm(t)$). To conclude, it suffices to observe that this is precisely equivalent to the existence of $\bar{t}\in \Fqm \setminus\{0\}$ such that $\mathcal{P}_{\bar{t}}(T)$ splits into $r+1$ distinct factors over $\Fqm$, since every rational place of $\Fqm(t)$ different from the zero and the pole of $t$ is of the form $(t = \bar{t})$, for some $\bar{t}\in \Fqm\setminus\{0\}$.
\end{proof}

\begin{definition}
    \label[definition]{defn:niceelement}
    An element $\bar{t}\in \Fqm$ is said to be \emph{nice} if $\bar{t}\neq 0$ and the polynomial $\PP_{\bar{t}}(T)$ (see \cref{eq:PPt}) splits completely into distinct factors over $\Fqm$.
\end{definition}

\begin{remark}
\label[remark]{rem:m:upperbound}
    \cref{lemma:existence:of:evalpoints} gives a sufficient condition to show that it is always possible, for any fixed $q$ and $r$, to choose a field of definition $\Fqm$ of $\E_r$ and $\M_r$ such that there exists at least one nice $\bar{t}\in \Fqm \setminus\{0\}$. However, from the lemma it is not a priori clear how large the smallest $m\in \mathbb{Z}_{\geq 1}$ satisfying \cref{eq:m:condition} is, since the inequalities in \cref{eq:m:condition} require to estimate the genus $g(E)$, which is, in general, not a straightforward task. Nevertheless, nice elements exist also in fields of much smaller cardinality than those satisfying \cref{eq:m:condition}, and they can be found effectively by direct computation, for instance with the aid of a computer algebra system. In fact, in all the examples in \cref{table:codes}, the nice elements were directly computed in this way.
\end{remark}

\emph{Nice} elements of $\Fqm$ (see \cref{defn:niceelement}) and \emph{nice} points in $\MM_r(\Fqm)$ (see \cref{defn:nicepoint}) are closely related. 

Indeed, note first that the existence of a nice $\bar{t}$ directly implies the existence of $r+1$ \emph{nice} elements of $\Fqm$, namely $\bar{t}, \zeta \bar{t},\ldots,\zeta^r\bar{t}$, since $\PP_{\bar{t}}(T) = \PP_{\zeta^i\bar{t}}(T)$ for all $i=1,\ldots,r$.
This means that the existence of a nice $\bar{t}\in \Fqm$ ensures the existence of $(r+1)^2$ nice points $P$ of $\MM_r$. 
Indeed, any nice $\bar{t}\in \Fqm$ directly yields $r+1$ nice points $P_i:=[\bar{x}_i:\bar{x}_i^{\frac{r+1}{2}}+1:1;\bar{t}:1]\in \MM_r(\Fqm)$, $i=1,\ldots,r+1$, where $\PP_{\bar{t}}(\bar{x}_i)=0$. Since, for any $i$, $V_{P_i}=\bigcup_{j\neq i}\{P_j\}$, it is immediate to see that $|V_{P_i}|=r$ for all $i$. On the other hand, for any $i=1,\ldots, r+1$, each point in the set $H_{P_i}$, is also a nice point since $\PP_{\bar{t}}(T) = \PP_{\zeta^i\bar{t}}(T)$, for all $i=1,\ldots,r$. This means that, for any nice $\bar{t}\in \Fqm$, we have the following set of $(r+1)^2$ nice points of $\MM_r(\Fqm)$:
\begin{align}
\label{eq:Sbar:def}
    S_{\bar{t}}:=\{[x: x^{\frac{r+1}{2}} + 1 : 1 ; \zeta^j\bar{t} : 1 ] \mid j=0,\ldots, r, \ \PP_{\bar{t}}(x)=0\}.  
\end{align}
Henceforth, we denote by 
\begin{align}
\label{eq:Gmdef}
    \mathcal{G}_m := \left\{\bar{t}\in \Fqm\setminus\{0\} \mid \  \PP_{\bar{t}}(T)=\prod_{\substack{i=1\\ \alpha_i\in \Fqm \ \forall i, \\ \forall i \neq j, \ \alpha_i \neq \alpha_j,}}^{r+1}(T-\alpha_i) \right\} \subseteq \Fqm,
\end{align}
i.e., the subset of elements of $\Fqm$ which are \emph{nice}. Note that $|\mathcal{G}_m|=(r+1)M$, for some $M\in \mathbb{Z}_{\geq 1}$, since if $\bar{t}\in \Fqm$ is nice then also $\zeta^j\bar{t}\in \Fqm$ is nice, for all $j=1,\ldots,r$. This means that $\mathcal{G}_m$ can be partitioned into $M$ many subsets
\begin{equation}
\label{eq:Gmdef:2}
    \begin{aligned}
    \mathcal{G}_m &= \bigcup_{\ell=1}^M \mathcal{G}_m^{(\ell)}\\
    &= \bigcup_{\ell=1}^M\{\zeta^j\bar{t}^{(\ell)} \mid j=1,\ldots, r+1\},
    \end{aligned}
\end{equation}
such that for any $\ell, k \in \{1,\ldots, M\}$ there exist $j_1, j_2\in \{1,\ldots,r+1\}$ with $\zeta^{j_1}\bar{t}^{(\ell)} = \zeta^{j_2}\bar{t}^{(k)}$ if and only if $\ell=k$.

\begin{definition}
\label[definition]{def:evalpoints}
    Let $q$ and $r$ be as above, and let $m\in \mathbb{Z}_{\geq 1}$ be a positive integer such that $\mathcal{G}_m \neq \emptyset$.  Let $\E_r$ and $\M_r$ be defined over $\Fqm$, and choose subsets $\mathcal{G}_m^{(1)},\ldots,  \mathcal{G}_m^{(b)}$ of $\mathcal{G}_m$, where $b\in \mathbb{Z}$, $1\leq b \leq M$. Let $B:=\cup_{\ell=1}^b \mathcal{G}_m^{(\ell)}$.
    
    We define the set
    \begin{align*}
    S_B&:=\bigcup_{\bar{t}\in B} S_{\bar{t}}\\
    &=\{  P_{i,j}^{(\ell)}:= [\bar{x}_i^{(\ell)} : \left(\bar{x}_i^{(\ell)}\right)^{\frac{r+1}{2}} + 1 : 1 ; \bar{t}_j^{(\ell)} : 1 ]\in \MM_r(\Fqm) \mid \ell=1,\ldots, b, \ i=1,\ldots, r+1, \ j=1,\ldots, r+1 \}, 
    \end{align*}
    where, for $\ell$ fixed, $\bar{t}_j^{(\ell)}:=\zeta^j\bar{t}^{(\ell)}$.
    Then we have that $|S_B|=b(r+1)^2$.

    With these notations in place, we can conveniently redefine for any point $P_{\bar{i},\bar{j}}^{(\bar{\ell})}\in S_B$ the subsets introduced in \cref{eq:recsubsets:def} as
    \begin{equation}
\label{eq:horizontalset}
    H_{\bar{i}}^{(\bar{\ell})}:=\left\{P_{\bar{i},j}^{(\bar{\ell})} \mid j=1,\ldots,r+1 \ \mbox{and} \ j\neq \bar{j}\right\}
\end{equation}
and
\begin{equation}
\label{eq:verticalset}
    V_{\bar{j}}^{(\bar{\ell})}:=\left\{P_{i,\bar{j}}^{(\bar{\ell})} \mid i=1,\ldots,r+1 \ \mbox{and} \  i\neq \bar{i}\right\}.
\end{equation}
\end{definition}

\begin{remark}
    \label[remark]{rem:geometric:intuition}
    There are multiple ways to interpret the \emph{nice} points in $\MM_r(\Fqm)$. Indeed, on the one hand, nice points correspond exactly to rational places of $\Fqm(t,x)$ that lie over a completely splitting place both in $\Fqm(t,x)/\Fqm(t)$ and $\Fqm(t,x)/\Fqm(x)$. The function field extensions $\Fqm(t,x)/\Fqm(t)$ and $\Fqm(t,x)/\Fqm(x)$ can be seen as obtained from the pullback of rational maps $\pi_t: \MM_r \longrightarrow \P^1_{[t:w]}$ and $\pi_x: \MM_r \longrightarrow \P^1_{[x:z]}$, respectively, where $\pi_t : [\bar{x}:\bar{x}^{\frac{r+1}{2}}+1:1;\bar{t}:1] \longmapsto \t$  is the projection onto the $t$-coordinate and $\pi_x: [\bar{x}:\bar{x}^{\frac{r+1}{2}}+1:1;\bar{t}:1] \longmapsto \bar{x}$ it the projection onto the $x$-coordinate. 
    In this setting, given a \emph{nice} point $P:=[\bar{x}:\bar{x}^{\frac{r+1}{2}}+1:1;\bar{t}:1]\in \MM_r(\Fqm)$, we hence have that $H_P=\pi_x^{-1}([\bar{x}:1])\setminus \{P\}$ and $V_P=\pi_t^{-1}([\t:1])\setminus \{P\}$. Moreover, the set $\mathcal{G}_m$ corresponds to the set of affine points of $\P^1_{[t:w]}$ whose preimage under $\pi_t$ consists of exactly $r+1$ $\Fqm$-rational points of $\MM_r$.  

    On the other hand, one can consider the two fibrations
    \begin{align*}
    \begin{split}
    \Pi_t: \EE_r \longrightarrow \P^1_{[t:w]},\\
    \Pi_x: \EE_r \longrightarrow \P^1_{[x:z]},
    \end{split}
    \end{align*}
    where, similarly as above, $\Pi_t,\Pi_x$ are the projection onto the $t$-coordinate, resp. the $x$-coordinate. Observe that such maps are in fact proper morphisms with connected general fiber. 
    Given a \emph{nice} point $P\in \MM_r(\Fqm)$, note that it can be also regarded as a point of $\EE_r(\Fqm)$. This inspires the notation for the sets $H_P$ and $V_P$ introduced in \cref{eq:recsubsets:def} (see also \cref{eq:horizontalset} and \cref{eq:verticalset}). We call the fibration $\Pi_t$ \emph{vertical} and the fibration $\Pi_x$ \emph{horizontal}, hence since $H_P=\Pi_x^{-1}([\bar{x}:1])\setminus \{P\}$ and $V_P=\Pi_t^{-1}([\t:1])\setminus \{P\}$, in what follows we refer to $H_P$ as the \emph{horizontal set} associated to $P$ and as $V_P$ as the \emph{vertical set} associated to $P$.

    In our code construction, this terminology comes in handy to convey the intuition behind the recovery procedure outlined in \cref{lemma:genconstruction:recovery}. See \cref{fig:visualintuition} for a visual intuition of a set $S_{\bar{t}}$ of points as in \cref{eq:Sbar:def}.

    Note that, with slight abuse of notation, in the following sections we simply write $\Pi_t^{-1}(\bar{t})$ (resp. $\Pi_x^{-1}(\bar{x})$) for the fiber over the point $[\bar{t}:1]$ (resp. $[\bar{x}:1]$), and we refer to $\Pi_t^{-1}(\bar{t})$ as a \emph{vertical} (resp. $\Pi_x^{-1}(\bar{x})$ as a \emph{horizontal}) fiber. 
\end{remark}

\begin{figure}[ht]
    \centering
\adjustbox{scale=0.8}{%

\tikzset{every picture/.style={line width=0.75pt}} 

\begin{tikzpicture}[x=0.75pt,y=0.75pt,yscale=-1,xscale=1]

\draw  [color={rgb, 255:red, 208; green, 2; blue, 27 }  ,draw opacity=1 ][line width=1.5] [line join = round][line cap = round] (79.5,118.5) .. controls (74.45,127.33) and (71.16,138.16) .. (79.5,146.5) .. controls (82.52,149.52) and (87.3,150.84) .. (89.5,154.5) .. controls (96.49,166.14) and (99.95,174.76) .. (102.5,187.5) .. controls (105.43,202.14) and (82.69,223.47) .. (85.5,237.5) .. controls (87.21,246.07) and (97.31,251.58) .. (102.5,258.5) .. controls (106.23,263.47) and (105.77,278.1) .. (105.5,282.5) .. controls (104.54,297.92) and (82.33,312) .. (84.5,331.5) .. controls (85.82,343.37) and (109.29,343.53) .. (106.5,357.5) .. controls (105.46,362.69) and (98.4,362.6) .. (95.5,365.5) ;
\draw  [color={rgb, 255:red, 208; green, 2; blue, 27 }  ,draw opacity=1 ][line width=1.5] [line join = round][line cap = round] (198.5,116.5) .. controls (193.45,125.33) and (190.16,136.16) .. (198.5,144.5) .. controls (201.52,147.52) and (206.3,148.84) .. (208.5,152.5) .. controls (215.49,164.14) and (218.95,172.76) .. (221.5,185.5) .. controls (224.43,200.14) and (201.69,221.47) .. (204.5,235.5) .. controls (206.21,244.07) and (216.31,249.58) .. (221.5,256.5) .. controls (225.23,261.47) and (224.77,276.1) .. (224.5,280.5) .. controls (223.54,295.92) and (201.33,310) .. (203.5,329.5) .. controls (204.82,341.37) and (228.29,341.53) .. (225.5,355.5) .. controls (224.46,360.69) and (217.4,360.6) .. (214.5,363.5) ;
\draw  [color={rgb, 255:red, 208; green, 2; blue, 27 }  ,draw opacity=1 ][line width=1.5] [line join = round][line cap = round] (317.5,115.5) .. controls (312.45,124.33) and (309.16,135.16) .. (317.5,143.5) .. controls (320.52,146.52) and (325.3,147.84) .. (327.5,151.5) .. controls (334.49,163.14) and (337.95,171.76) .. (340.5,184.5) .. controls (343.43,199.14) and (320.69,220.47) .. (323.5,234.5) .. controls (325.21,243.07) and (335.31,248.58) .. (340.5,255.5) .. controls (344.23,260.47) and (343.77,275.1) .. (343.5,279.5) .. controls (342.54,294.92) and (320.33,309) .. (322.5,328.5) .. controls (323.82,340.37) and (347.29,340.53) .. (344.5,354.5) .. controls (343.46,359.69) and (336.4,359.6) .. (333.5,362.5) ;
\draw  [color={rgb, 255:red, 208; green, 2; blue, 27 }  ,draw opacity=1 ][line width=1.5] [line join = round][line cap = round] (439.5,114.5) .. controls (434.45,123.33) and (431.16,134.16) .. (439.5,142.5) .. controls (442.52,145.52) and (447.3,146.84) .. (449.5,150.5) .. controls (456.49,162.14) and (459.95,170.76) .. (462.5,183.5) .. controls (465.43,198.14) and (442.69,219.47) .. (445.5,233.5) .. controls (447.21,242.07) and (457.31,247.58) .. (462.5,254.5) .. controls (466.23,259.47) and (465.77,274.1) .. (465.5,278.5) .. controls (464.54,293.92) and (442.33,308) .. (444.5,327.5) .. controls (445.82,339.37) and (469.29,339.53) .. (466.5,353.5) .. controls (465.46,358.69) and (458.4,358.6) .. (455.5,361.5) ;
\draw  [color={rgb, 255:red, 74; green, 144; blue, 226 }  ,draw opacity=1 ][line width=1.5] [line join = round][line cap = round] (57.5,129.5) .. controls (63.95,129.5) and (89.32,125.44) .. (90.5,132.5) .. controls (92.7,145.72) and (77.18,140.71) .. (71.5,144.5) .. controls (67.69,147.04) and (64.9,152.72) .. (71.5,153.5) .. controls (85.44,155.14) and (93.89,150.3) .. (106.5,148.5) .. controls (123.5,146.07) and (139.69,143.08) .. (156.5,138.5) .. controls (172.1,134.24) and (186.87,128.22) .. (203.5,129.5) .. controls (207.1,129.78) and (206.8,134.9) .. (205.5,137.5) .. controls (205.35,137.8) and (204.74,137.26) .. (204.5,137.5) .. controls (201.74,140.26) and (178.27,148.39) .. (184.5,151.5) .. controls (192.53,155.52) and (218.75,147.59) .. (227.5,146.5) .. controls (244.44,144.38) and (261.59,143.88) .. (278.5,140.5) .. controls (294.18,137.36) and (310.91,131.22) .. (327.5,132.5) .. controls (329.55,132.66) and (330.38,138.74) .. (329.5,140.5) .. controls (327.07,145.36) and (302.27,142.8) .. (305.5,152.5) .. controls (306.61,155.83) and (330.26,150.88) .. (333.5,150.5) .. controls (355.3,147.93) and (376.89,147.9) .. (398.5,145.5) .. controls (409.84,144.24) and (421.16,139.07) .. (432.5,138.5) .. controls (438.49,138.2) and (444.51,138.18) .. (450.5,138.5) .. controls (453.96,138.68) and (459.71,143.89) .. (454.5,146.5) .. controls (453.9,146.8) and (453.1,146.2) .. (452.5,146.5) .. controls (449.56,147.97) and (428.57,149.63) .. (431.5,155.5) .. controls (433.33,159.16) and (442.04,158.77) .. (444.5,158.5) .. controls (461.83,156.57) and (478.41,151.5) .. (495.5,151.5) ;
\draw  [color={rgb, 255:red, 74; green, 144; blue, 226 }  ,draw opacity=1 ][line width=1.5] [line join = round][line cap = round] (66.5,206.5) .. controls (72.95,206.5) and (98.32,202.44) .. (99.5,209.5) .. controls (101.7,222.72) and (86.18,217.71) .. (80.5,221.5) .. controls (76.69,224.04) and (73.9,229.72) .. (80.5,230.5) .. controls (94.44,232.14) and (102.89,227.3) .. (115.5,225.5) .. controls (132.5,223.07) and (148.69,220.08) .. (165.5,215.5) .. controls (181.1,211.24) and (195.87,205.22) .. (212.5,206.5) .. controls (216.1,206.78) and (215.8,211.9) .. (214.5,214.5) .. controls (214.35,214.8) and (213.74,214.26) .. (213.5,214.5) .. controls (210.74,217.26) and (187.27,225.39) .. (193.5,228.5) .. controls (201.53,232.52) and (227.75,224.59) .. (236.5,223.5) .. controls (253.44,221.38) and (270.59,220.88) .. (287.5,217.5) .. controls (303.18,214.36) and (319.91,208.22) .. (336.5,209.5) .. controls (338.55,209.66) and (339.38,215.74) .. (338.5,217.5) .. controls (336.07,222.36) and (311.27,219.8) .. (314.5,229.5) .. controls (315.61,232.83) and (339.26,227.88) .. (342.5,227.5) .. controls (364.3,224.93) and (385.89,224.9) .. (407.5,222.5) .. controls (418.84,221.24) and (430.16,216.07) .. (441.5,215.5) .. controls (447.49,215.2) and (453.51,215.18) .. (459.5,215.5) .. controls (462.96,215.68) and (468.71,220.89) .. (463.5,223.5) .. controls (462.9,223.8) and (462.1,223.2) .. (461.5,223.5) .. controls (458.56,224.97) and (437.57,226.63) .. (440.5,232.5) .. controls (442.33,236.16) and (451.04,235.77) .. (453.5,235.5) .. controls (470.83,233.57) and (487.41,228.5) .. (504.5,228.5) ;
\draw  [color={rgb, 255:red, 74; green, 144; blue, 226 }  ,draw opacity=1 ][line width=1.5] [line join = round][line cap = round] (78.5,270.5) .. controls (84.95,270.5) and (110.32,266.44) .. (111.5,273.5) .. controls (113.7,286.72) and (98.18,281.71) .. (92.5,285.5) .. controls (88.69,288.04) and (85.9,293.72) .. (92.5,294.5) .. controls (106.44,296.14) and (114.89,291.3) .. (127.5,289.5) .. controls (144.5,287.07) and (160.69,284.08) .. (177.5,279.5) .. controls (193.1,275.24) and (207.87,269.22) .. (224.5,270.5) .. controls (228.1,270.78) and (227.8,275.9) .. (226.5,278.5) .. controls (226.35,278.8) and (225.74,278.26) .. (225.5,278.5) .. controls (222.74,281.26) and (199.27,289.39) .. (205.5,292.5) .. controls (213.53,296.52) and (239.75,288.59) .. (248.5,287.5) .. controls (265.44,285.38) and (282.59,284.88) .. (299.5,281.5) .. controls (315.18,278.36) and (331.91,272.22) .. (348.5,273.5) .. controls (350.55,273.66) and (351.38,279.74) .. (350.5,281.5) .. controls (348.07,286.36) and (323.27,283.8) .. (326.5,293.5) .. controls (327.61,296.83) and (351.26,291.88) .. (354.5,291.5) .. controls (376.3,288.93) and (397.89,288.9) .. (419.5,286.5) .. controls (430.84,285.24) and (442.16,280.07) .. (453.5,279.5) .. controls (459.49,279.2) and (465.51,279.18) .. (471.5,279.5) .. controls (474.96,279.68) and (480.71,284.89) .. (475.5,287.5) .. controls (474.9,287.8) and (474.1,287.2) .. (473.5,287.5) .. controls (470.56,288.97) and (449.57,290.63) .. (452.5,296.5) .. controls (454.33,300.16) and (463.04,299.77) .. (465.5,299.5) .. controls (482.83,297.57) and (499.41,292.5) .. (516.5,292.5) ;
\draw  [color={rgb, 255:red, 74; green, 144; blue, 226 }  ,draw opacity=1 ][line width=1.5] [line join = round][line cap = round] (68.5,318.5) .. controls (74.95,318.5) and (100.32,314.44) .. (101.5,321.5) .. controls (103.7,334.72) and (88.18,329.71) .. (82.5,333.5) .. controls (78.69,336.04) and (75.9,341.72) .. (82.5,342.5) .. controls (96.44,344.14) and (104.89,339.3) .. (117.5,337.5) .. controls (134.5,335.07) and (150.69,332.08) .. (167.5,327.5) .. controls (183.1,323.24) and (197.87,317.22) .. (214.5,318.5) .. controls (218.1,318.78) and (217.8,323.9) .. (216.5,326.5) .. controls (216.35,326.8) and (215.74,326.26) .. (215.5,326.5) .. controls (212.74,329.26) and (189.27,337.39) .. (195.5,340.5) .. controls (203.53,344.52) and (229.75,336.59) .. (238.5,335.5) .. controls (255.44,333.38) and (272.59,332.88) .. (289.5,329.5) .. controls (305.18,326.36) and (321.91,320.22) .. (338.5,321.5) .. controls (340.55,321.66) and (341.38,327.74) .. (340.5,329.5) .. controls (338.07,334.36) and (313.27,331.8) .. (316.5,341.5) .. controls (317.61,344.83) and (341.26,339.88) .. (344.5,339.5) .. controls (366.3,336.93) and (387.89,336.9) .. (409.5,334.5) .. controls (420.84,333.24) and (432.16,328.07) .. (443.5,327.5) .. controls (449.49,327.2) and (455.51,327.18) .. (461.5,327.5) .. controls (464.96,327.68) and (470.71,332.89) .. (465.5,335.5) .. controls (464.9,335.8) and (464.1,335.2) .. (463.5,335.5) .. controls (460.56,336.97) and (439.57,338.63) .. (442.5,344.5) .. controls (444.33,348.16) and (453.04,347.77) .. (455.5,347.5) .. controls (472.83,345.57) and (489.41,340.5) .. (506.5,340.5) ;
\draw  [color={rgb, 255:red, 74; green, 74; blue, 74 }  ,draw opacity=1 ][dash pattern={on 0.84pt off 2.51pt}][line width=0.75] [line join = round][line cap = round] (66.45,139.6) .. controls (76.05,139.6) and (85.72,143.87) .. (91.45,149.6) .. controls (93.5,151.65) and (101.67,152.67) .. (104.45,153.6) .. controls (104.88,153.74) and (107.13,156.72) .. (108.45,157.6) .. controls (117.88,163.89) and (128.08,174.34) .. (121.45,187.6) .. controls (119.84,190.81) and (115.94,194.36) .. (113.45,195.6) .. controls (109.81,197.42) and (108.08,203.28) .. (105.45,204.6) .. controls (97.78,208.43) and (88.95,209.2) .. (80.45,212.6) .. controls (73.12,215.53) and (60.03,232.53) .. (62.45,240.6) .. controls (65.2,249.78) and (75.51,250.44) .. (81.45,254.6) .. controls (90.64,261.04) and (95.78,267.39) .. (105.45,268.6) .. controls (124.83,271.02) and (128.3,264.36) .. (130.45,281.6) .. controls (130.74,283.94) and (132.07,286.33) .. (131.45,288.6) .. controls (131.33,289.04) and (127.59,294.33) .. (127.45,294.6) .. controls (118.24,313.02) and (116.1,312.87) .. (88.45,314.6) .. controls (86.51,314.72) and (79.96,323.47) .. (78.45,324.6) .. controls (71.71,329.65) and (68.57,341.87) .. (67.45,348.6) .. controls (67.22,349.97) and (68.06,356.52) .. (69.45,356.6) .. controls (85.65,357.55) and (88.1,357.56) .. (104.45,356.6) .. controls (107.78,356.4) and (112,353.7) .. (115.45,353.6) .. controls (129.69,353.18) and (143.34,355.64) .. (155.45,351.6) .. controls (160.39,349.95) and (163.71,345.44) .. (168.45,342.6) .. controls (171.32,340.88) and (176.45,340.1) .. (179.45,338.6) .. controls (183.58,336.53) and (184.15,331.9) .. (186.45,329.6) .. controls (190.8,325.25) and (197.34,326.71) .. (201.45,322.6) .. controls (203.49,320.56) and (206.08,314.79) .. (208.45,313.6) .. controls (211.71,311.97) and (223.76,306.84) .. (225.45,306.6) .. controls (228.56,306.16) and (238.88,305.57) .. (239.45,301.6) .. controls (241.08,290.17) and (231.95,288.1) .. (226.45,282.6) .. controls (224.36,280.51) and (223.51,274.62) .. (220.45,273.6) .. controls (205.58,268.64) and (197.08,258.09) .. (198.45,237.6) .. controls (198.96,229.97) and (208.8,229.74) .. (214.45,226.6) .. controls (232.04,216.83) and (252.2,199.86) .. (244.45,176.6) .. controls (240.5,164.75) and (229.4,158.59) .. (217.45,155.6) .. controls (216.32,155.32) and (211.15,151.88) .. (209.45,151.6) .. controls (202.72,150.48) and (167.75,170.85) .. (166.45,152.6) .. controls (166.14,148.28) and (166.12,143.92) .. (166.45,139.6) .. controls (166.55,138.25) and (169.27,134.78) .. (170.45,133.6) .. controls (173.49,130.56) and (174.31,126.74) .. (177.45,123.6) .. controls (186.89,114.16) and (213.22,113.21) .. (224.45,113.6) .. controls (235.98,114) and (251.09,113.06) .. (262.45,113.6) .. controls (276.27,114.26) and (286.5,113.98) .. (299.45,115.6) .. controls (312.99,117.29) and (309.38,129.53) .. (315.45,135.6) .. controls (318.18,138.33) and (321.52,137.96) .. (326.45,139.6) .. controls (329.28,140.54) and (330.15,144.3) .. (332.45,146.6) .. controls (339.24,153.39) and (344.78,157.91) .. (347.45,168.6) .. controls (349.51,176.85) and (339.72,176.06) .. (337.45,180.6) .. controls (332.11,191.27) and (329.44,192.36) .. (330.45,209.6) .. controls (330.58,211.73) and (333.77,212.58) .. (334.45,214.6) .. controls (335.88,218.89) and (336.51,222.85) .. (337.45,226.6) .. controls (338.21,229.64) and (342.51,231.73) .. (343.45,233.6) .. controls (347.82,242.34) and (351.95,255.09) .. (348.45,265.6) .. controls (347.77,267.64) and (345.27,267.15) .. (344.45,269.6) .. controls (342.81,274.53) and (339.38,280.74) .. (337.45,284.6) .. controls (335.47,288.56) and (328.21,288.84) .. (324.45,292.6) .. controls (320.01,297.04) and (314.36,303.69) .. (310.45,307.6) .. controls (306.06,311.99) and (304.13,323.28) .. (308.45,327.6) .. controls (310.43,329.58) and (319.01,330.91) .. (322.45,331.6) .. controls (323.18,331.75) and (322.74,333.36) .. (323.45,333.6) .. controls (330.3,335.88) and (336.79,337.32) .. (344.45,338.6) .. controls (357.13,340.71) and (368.35,350.89) .. (382.45,351.6) .. controls (409.97,352.98) and (432.89,339.16) .. (446.45,325.6) .. controls (451.9,320.15) and (464.9,320.25) .. (468.45,309.6) .. controls (472.4,297.74) and (464.3,290.45) .. (458.45,284.6) .. controls (455.01,281.16) and (454.78,273.93) .. (451.45,270.6) .. controls (444.12,263.27) and (438.99,263.7) .. (440.45,250.6) .. controls (440.63,248.94) and (442.92,248.18) .. (443.45,246.6) .. controls (444.61,243.12) and (443.9,238.71) .. (445.45,235.6) .. controls (445.91,234.67) and (457.18,228.87) .. (459.45,226.6) .. controls (465.52,220.53) and (468.69,205.89) .. (471.45,197.6) .. controls (473.63,191.05) and (480.73,182.15) .. (476.45,173.6) .. controls (468.31,157.31) and (452.45,159.35) .. (438.45,150.6) .. controls (433.32,147.39) and (420.71,137.7) .. (424.45,129.6) .. controls (429.92,117.76) and (438.96,122.2) .. (450.45,121.6) .. controls (460.53,121.07) and (473.83,121.01) .. (484.45,121.6) .. controls (487.1,121.75) and (491.96,124.86) .. (493.45,125.6) .. controls (495.58,126.67) and (501.27,128.6) .. (500.45,128.6) ;
\draw  [color={rgb, 255:red, 245; green, 166; blue, 35 }  ,draw opacity=1 ][fill={rgb, 255:red, 0; green, 0; blue, 0 }  ,fill opacity=1 ][line width=1.5]  (73.44,141.59) .. controls (73.46,143.57) and (75.08,145.15) .. (77.05,145.12) .. controls (79.03,145.1) and (80.61,143.48) .. (80.58,141.51) .. controls (80.56,139.54) and (78.94,137.96) .. (76.97,137.98) .. controls (75,138) and (73.42,139.62) .. (73.44,141.59) -- cycle ;
\draw  [color={rgb, 255:red, 208; green, 2; blue, 27 }  ,draw opacity=1 ] (76.6,379.95) .. controls (76.6,384.62) and (78.93,386.95) .. (83.6,386.95) -- (271.1,386.95) .. controls (277.77,386.95) and (281.1,389.28) .. (281.1,393.95) .. controls (281.1,389.28) and (284.43,386.95) .. (291.1,386.95)(288.1,386.95) -- (478.6,386.95) .. controls (483.27,386.95) and (485.6,384.62) .. (485.6,379.95) ;
\draw  [color={rgb, 255:red, 74; green, 144; blue, 226 }  ,draw opacity=1 ] (538.6,374.95) .. controls (543.27,374.97) and (545.61,372.65) .. (545.63,367.98) -- (546.07,252.92) .. controls (546.1,246.25) and (548.44,242.93) .. (553.11,242.95) .. controls (548.44,242.93) and (546.12,239.59) .. (546.15,232.92)(546.13,235.92) -- (546.57,121.98) .. controls (546.59,117.31) and (544.27,114.97) .. (539.6,114.95) ;
\draw  [color={rgb, 255:red, 189; green, 16; blue, 224 }  ,draw opacity=1 ][fill={rgb, 255:red, 0; green, 0; blue, 0 }  ,fill opacity=1 ][line width=1.5]  (201.44,150.59) .. controls (201.46,152.57) and (203.08,154.15) .. (205.05,154.12) .. controls (207.03,154.1) and (208.61,152.48) .. (208.58,150.51) .. controls (208.56,148.54) and (206.94,146.96) .. (204.97,146.98) .. controls (203,147) and (201.42,148.62) .. (201.44,150.59) -- cycle ;
\draw  [color={rgb, 255:red, 126; green, 211; blue, 33 }  ,draw opacity=1 ][fill={rgb, 255:red, 0; green, 0; blue, 0 }  ,fill opacity=1 ][line width=1.5]  (308.44,131.59) .. controls (308.46,133.57) and (310.08,135.15) .. (312.05,135.12) .. controls (314.03,135.1) and (315.61,133.48) .. (315.58,131.51) .. controls (315.56,129.54) and (313.94,127.96) .. (311.97,127.98) .. controls (310,128) and (308.42,129.62) .. (308.44,131.59) -- cycle ;
\draw  [color={rgb, 255:red, 155; green, 155; blue, 155 }  ,draw opacity=1 ][fill={rgb, 255:red, 0; green, 0; blue, 0 }  ,fill opacity=1 ][line width=1.5]  (448.44,156.59) .. controls (448.46,158.57) and (450.08,160.15) .. (452.05,160.12) .. controls (454.03,160.1) and (455.61,158.48) .. (455.58,156.51) .. controls (455.56,154.54) and (453.94,152.96) .. (451.97,152.98) .. controls (450,153) and (448.42,154.62) .. (448.44,156.59) -- cycle ;
\draw  [color={rgb, 255:red, 245; green, 166; blue, 35 }  ,draw opacity=1 ][fill={rgb, 255:red, 255; green, 255; blue, 255 }  ,fill opacity=1 ][line width=1.5]  (92.44,205.59) .. controls (92.46,207.57) and (94.08,209.15) .. (96.05,209.12) .. controls (98.03,209.1) and (99.61,207.48) .. (99.58,205.51) .. controls (99.56,203.54) and (97.94,201.96) .. (95.97,201.98) .. controls (94,202) and (92.42,203.62) .. (92.44,205.59) -- cycle ;
\draw  [color={rgb, 255:red, 245; green, 166; blue, 35 }  ,draw opacity=1 ][fill={rgb, 255:red, 10; green, 76; blue, 201 }  ,fill opacity=1 ][line width=1.5]  (102.44,268.59) .. controls (102.46,270.57) and (104.08,272.15) .. (106.05,272.12) .. controls (108.03,272.1) and (109.61,270.48) .. (109.58,268.51) .. controls (109.56,266.54) and (107.94,264.96) .. (105.97,264.98) .. controls (104,265) and (102.42,266.62) .. (102.44,268.59) -- cycle ;
\draw  [color={rgb, 255:red, 245; green, 166; blue, 35 }  ,draw opacity=1 ][fill={rgb, 255:red, 248; green, 231; blue, 28 }  ,fill opacity=1 ][line width=1.5]  (82.44,316.59) .. controls (82.46,318.57) and (84.08,320.15) .. (86.05,320.12) .. controls (88.03,320.1) and (89.61,318.48) .. (89.58,316.51) .. controls (89.56,314.54) and (87.94,312.96) .. (85.97,312.98) .. controls (84,313) and (82.42,314.62) .. (82.44,316.59) -- cycle ;
\draw  [color={rgb, 255:red, 189; green, 16; blue, 224 }  ,draw opacity=1 ][fill={rgb, 255:red, 255; green, 255; blue, 255 }  ,fill opacity=1 ][line width=1.5]  (200.44,229.59) .. controls (200.46,231.57) and (202.08,233.15) .. (204.05,233.12) .. controls (206.03,233.1) and (207.61,231.48) .. (207.58,229.51) .. controls (207.56,227.54) and (205.94,225.96) .. (203.97,225.98) .. controls (202,226) and (200.42,227.62) .. (200.44,229.59) -- cycle ;
\draw  [color={rgb, 255:red, 189; green, 16; blue, 224 }  ,draw opacity=1 ][fill={rgb, 255:red, 10; green, 76; blue, 201 }  ,fill opacity=1 ][line width=1.5]  (220.44,278.59) .. controls (220.46,280.57) and (222.08,282.15) .. (224.05,282.12) .. controls (226.03,282.1) and (227.61,280.48) .. (227.58,278.51) .. controls (227.56,276.54) and (225.94,274.96) .. (223.97,274.98) .. controls (222,275) and (220.42,276.62) .. (220.44,278.59) -- cycle ;
\draw  [color={rgb, 255:red, 189; green, 16; blue, 224 }  ,draw opacity=1 ][fill={rgb, 255:red, 248; green, 231; blue, 28 }  ,fill opacity=1 ][line width=1.5]  (200.44,318.59) .. controls (200.46,320.57) and (202.08,322.15) .. (204.05,322.12) .. controls (206.03,322.1) and (207.61,320.48) .. (207.58,318.51) .. controls (207.56,316.54) and (205.94,314.96) .. (203.97,314.98) .. controls (202,315) and (200.42,316.62) .. (200.44,318.59) -- cycle ;
\draw  [color={rgb, 255:red, 126; green, 211; blue, 33 }  ,draw opacity=1 ][fill={rgb, 255:red, 255; green, 255; blue, 255 }  ,fill opacity=1 ][line width=1.5]  (326.44,209.59) .. controls (326.46,211.57) and (328.08,213.15) .. (330.05,213.12) .. controls (332.03,213.1) and (333.61,211.48) .. (333.58,209.51) .. controls (333.56,207.54) and (331.94,205.96) .. (329.97,205.98) .. controls (328,206) and (326.42,207.62) .. (326.44,209.59) -- cycle ;
\draw  [color={rgb, 255:red, 126; green, 211; blue, 33 }  ,draw opacity=1 ][fill={rgb, 255:red, 10; green, 76; blue, 201 }  ,fill opacity=1 ][line width=1.5]  (338.44,272.59) .. controls (338.46,274.57) and (340.08,276.15) .. (342.05,276.12) .. controls (344.03,276.1) and (345.61,274.48) .. (345.58,272.51) .. controls (345.56,270.54) and (343.94,268.96) .. (341.97,268.98) .. controls (340,269) and (338.42,270.62) .. (338.44,272.59) -- cycle ;
\draw  [color={rgb, 255:red, 126; green, 211; blue, 33 }  ,draw opacity=1 ][fill={rgb, 255:red, 248; green, 231; blue, 28 }  ,fill opacity=1 ][line width=1.5]  (319.44,333.59) .. controls (319.46,335.57) and (321.08,337.15) .. (323.05,337.12) .. controls (325.03,337.1) and (326.61,335.48) .. (326.58,333.51) .. controls (326.56,331.54) and (324.94,329.96) .. (322.97,329.98) .. controls (321,330) and (319.42,331.62) .. (319.44,333.59) -- cycle ;
\draw  [color={rgb, 255:red, 155; green, 155; blue, 155 }  ,draw opacity=1 ][fill={rgb, 255:red, 255; green, 255; blue, 255 }  ,fill opacity=1 ][line width=1.5]  (441.44,235.59) .. controls (441.46,237.57) and (443.08,239.15) .. (445.05,239.12) .. controls (447.03,239.1) and (448.61,237.48) .. (448.58,235.51) .. controls (448.56,233.54) and (446.94,231.96) .. (444.97,231.98) .. controls (443,232) and (441.42,233.62) .. (441.44,235.59) -- cycle ;
\draw  [color={rgb, 255:red, 155; green, 155; blue, 155 }  ,draw opacity=1 ][fill={rgb, 255:red, 10; green, 76; blue, 201 }  ,fill opacity=1 ][line width=1.5]  (457.44,290.59) .. controls (457.46,292.57) and (459.08,294.15) .. (461.05,294.12) .. controls (463.03,294.1) and (464.61,292.48) .. (464.58,290.51) .. controls (464.56,288.54) and (462.94,286.96) .. (460.97,286.98) .. controls (459,287) and (457.42,288.62) .. (457.44,290.59) -- cycle ;
\draw  [color={rgb, 255:red, 155; green, 155; blue, 155 }  ,draw opacity=1 ][fill={rgb, 255:red, 248; green, 231; blue, 28 }  ,fill opacity=1 ][line width=1.5]  (441.44,326.59) .. controls (441.46,328.57) and (443.08,330.15) .. (445.05,330.12) .. controls (447.03,330.1) and (448.61,328.48) .. (448.58,326.51) .. controls (448.56,324.54) and (446.94,322.96) .. (444.97,322.98) .. controls (443,323) and (441.42,324.62) .. (441.44,326.59) -- cycle ;
\draw [color={rgb, 255:red, 255; green, 255; blue, 255 }  ,draw opacity=1 ]   (72,127.1) -- (80,127.1) ;
\draw  [color={rgb, 255:red, 255; green, 255; blue, 255 }  ,draw opacity=1 ][line width=0.75] [line join = round][line cap = round] (84,229.23) .. controls (85.33,229.23) and (86.67,229.23) .. (88,229.23) ;
\draw  [color={rgb, 255:red, 255; green, 255; blue, 255 }  ,draw opacity=1 ][line width=0.75] [line join = round][line cap = round] (87,232.23) .. controls (85.67,232.23) and (84.33,232.23) .. (83,232.23) ;
\draw  [color={rgb, 255:red, 255; green, 255; blue, 255 }  ,draw opacity=1 ][line width=0.75] [line join = round][line cap = round] (99,238.23) .. controls (99,238.23) and (99,238.23) .. (99,238.23) ;
\draw  [color={rgb, 255:red, 255; green, 255; blue, 255 }  ,draw opacity=1 ][line width=0.75] [line join = round][line cap = round] (87,229.23) .. controls (86,229.23) and (85,229.23) .. (84,229.23) ;
\draw  [color={rgb, 255:red, 255; green, 255; blue, 255 }  ,draw opacity=1 ][line width=1.5] [line join = round][line cap = round] (9.5,267.07) .. controls (2.76,295) and (-1.5,318.34) .. (-1.5,347.07) ;
\draw  [color={rgb, 255:red, 255; green, 255; blue, 255 }  ,draw opacity=1 ][line width=1.5] [line join = round][line cap = round] (-13.5,195) .. controls (-18.31,199.81) and (-32.5,200.98) .. (-32.5,207) ;
\draw  [color={rgb, 255:red, 255; green, 255; blue, 255 }  ,draw opacity=1 ][line width=0.75] [line join = round][line cap = round] (-49.5,187.58) .. controls (-66.24,187.58) and (-70.91,172.14) .. (-83.5,164.58) .. controls (-87.17,162.38) and (-100.86,159.58) .. (-98.5,159.58) ;
\draw  [color={rgb, 255:red, 255; green, 255; blue, 255 }  ,draw opacity=1 ][line width=0.75] [line join = round][line cap = round] (241.5,37.92) .. controls (249.01,33.22) and (251.7,31.62) .. (260.5,28.92) .. controls (261.28,28.68) and (272.52,27.92) .. (266.5,27.92) ;
\draw  [color={rgb, 255:red, 255; green, 255; blue, 255 }  ,draw opacity=1 ][line width=0.75] [line join = round][line cap = round] (74.5,129.82) .. controls (75.17,129.82) and (75.83,129.82) .. (76.5,129.82) ;
\draw  [color={rgb, 255:red, 255; green, 255; blue, 255 }  ,draw opacity=1 ][line width=0.75] [line join = round][line cap = round] (75.5,129.82) .. controls (75.17,129.82) and (74.83,129.82) .. (74.5,129.82) ;
\draw  [color={rgb, 255:red, 255; green, 255; blue, 255 }  ,draw opacity=1 ][line width=0.75] [line join = round][line cap = round] (74.5,129.82) .. controls (74.83,129.82) and (75.17,129.82) .. (75.5,129.82) ;
\draw  [color={rgb, 255:red, 255; green, 255; blue, 255 }  ,draw opacity=1 ][line width=0.75] [line join = round][line cap = round] (74.5,129.82) .. controls (74.5,129.82) and (74.5,129.82) .. (74.5,129.82) ;
\draw  [color={rgb, 255:red, 255; green, 255; blue, 255 }  ,draw opacity=1 ][line width=0.75] [line join = round][line cap = round] (74.5,129.82) .. controls (74.5,129.82) and (74.5,129.82) .. (74.5,129.82) ;
\draw  [color={rgb, 255:red, 255; green, 255; blue, 255 }  ,draw opacity=1 ][line width=0.75] [line join = round][line cap = round] (74.5,129.82) .. controls (74.83,129.82) and (75.17,129.82) .. (75.5,129.82) ;
\draw  [color={rgb, 255:red, 255; green, 255; blue, 255 }  ,draw opacity=1 ][line width=0.75] [line join = round][line cap = round] (73.5,129.82) .. controls (74.17,129.82) and (74.83,129.82) .. (75.5,129.82) ;
\draw  [color={rgb, 255:red, 255; green, 255; blue, 255 }  ,draw opacity=1 ][line width=0.75] [line join = round][line cap = round] (102.5,292.87) .. controls (102.83,292.87) and (103.17,292.87) .. (103.5,292.87) ;
\draw  [color={rgb, 255:red, 255; green, 255; blue, 255 }  ,draw opacity=1 ][line width=0.75] [line join = round][line cap = round] (102.5,292.87) .. controls (105.23,292.87) and (102.23,292.87) .. (99.5,292.87) ;
\draw  [color={rgb, 255:red, 255; green, 255; blue, 255 }  ,draw opacity=1 ][line width=0.75] [line join = round][line cap = round] (102.5,292.87) .. controls (104.83,292.87) and (102.83,292.87) .. (100.5,292.87) ;
\draw  [color={rgb, 255:red, 255; green, 255; blue, 255 }  ,draw opacity=1 ][line width=0.75] [line join = round][line cap = round] (101.5,292.87) .. controls (102.17,292.87) and (104.17,292.87) .. (103.5,292.87) .. controls (100.23,292.87) and (99.23,292.87) .. (102.5,292.87) ;
\draw  [color={rgb, 255:red, 255; green, 255; blue, 255 }  ,draw opacity=1 ][line width=0.75] [line join = round][line cap = round] (101.5,292.87) .. controls (103.39,292.87) and (102.39,292.87) .. (100.5,292.87) ;
\draw  [color={rgb, 255:red, 255; green, 255; blue, 255 }  ,draw opacity=1 ][line width=0.75] [line join = round][line cap = round] (100.5,292.87) .. controls (103.17,292.87) and (103.17,292.87) .. (100.5,292.87) ;
\draw  [color={rgb, 255:red, 255; green, 255; blue, 255 }  ,draw opacity=1 ][line width=0.75] [line join = round][line cap = round] (100.5,292.68) .. controls (101.83,292.68) and (103.17,292.68) .. (104.5,292.68) ;
\draw  [color={rgb, 255:red, 255; green, 255; blue, 255 }  ,draw opacity=1 ][line width=0.75] [line join = round][line cap = round] (100.05,293.52) .. controls (101.02,293.52) and (101.98,293.52) .. (102.95,293.52) ;
\draw  [color={rgb, 255:red, 255; green, 255; blue, 255 }  ,draw opacity=1 ][line width=0.75] [line join = round][line cap = round] (99.93,296.32) .. controls (106.87,296.32) and (100.76,296.32) .. (98.86,296.32) ;
\draw  [color={rgb, 255:red, 255; green, 255; blue, 255 }  ,draw opacity=1 ][line width=0.75] [line join = round][line cap = round] (82.4,331.67) .. controls (83.54,331.47) and (84.67,331.26) .. (85.8,331.05) ;
\draw  [color={rgb, 255:red, 255; green, 255; blue, 255 }  ,draw opacity=1 ][line width=0.75] [line join = round][line cap = round] (82.85,334.66) .. controls (83.96,334.38) and (85.08,334.09) .. (86.19,333.81) ;
\draw  [color={rgb, 255:red, 255; green, 255; blue, 255 }  ,draw opacity=1 ][line width=0.75] [line join = round][line cap = round] (192.51,127.99) .. controls (193.77,127.91) and (195.02,127.84) .. (196.27,127.76) ;
\draw  [color={rgb, 255:red, 255; green, 255; blue, 255 }  ,draw opacity=1 ][line width=0.75] [line join = round][line cap = round] (191.51,130.99) .. controls (192.77,130.91) and (194.02,130.84) .. (195.27,130.76) ;
\draw  [color={rgb, 255:red, 255; green, 255; blue, 255 }  ,draw opacity=1 ][line width=0.75] [line join = round][line cap = round] (208.7,215.69) .. controls (209.83,215.15) and (210.96,214.6) .. (212.09,214.05) ;
\draw  [color={rgb, 255:red, 255; green, 255; blue, 255 }  ,draw opacity=1 ][line width=0.75] [line join = round][line cap = round] (207.43,219.63) .. controls (208.54,219.09) and (209.66,218.55) .. (210.77,218.01) ;
\draw  [color={rgb, 255:red, 255; green, 255; blue, 255 }  ,draw opacity=1 ][line width=0.75] [line join = round][line cap = round] (219.57,291.37) .. controls (220.8,291.17) and (222.04,290.97) .. (223.27,290.77) ;
\draw  [color={rgb, 255:red, 255; green, 255; blue, 255 }  ,draw opacity=1 ][line width=0.75] [line join = round][line cap = round] (217.22,294.65) .. controls (218.45,294.46) and (219.69,294.26) .. (220.92,294.06) ;
\draw  [color={rgb, 255:red, 255; green, 255; blue, 255 }  ,draw opacity=1 ][line width=0.75] [line join = round][line cap = round] (210.22,339.65) .. controls (211.45,339.46) and (212.69,339.26) .. (213.92,339.06) ;
\draw  [color={rgb, 255:red, 255; green, 255; blue, 255 }  ,draw opacity=1 ][line width=0.75] [line join = round][line cap = round] (213.22,341.65) .. controls (214.45,341.46) and (215.69,341.26) .. (216.92,341.06) ;
\draw  [color={rgb, 255:red, 255; green, 255; blue, 255 }  ,draw opacity=1 ][line width=0.75] [line join = round][line cap = round] (324.22,150.65) .. controls (325.45,150.46) and (326.69,150.26) .. (327.92,150.06) ;
\draw  [color={rgb, 255:red, 255; green, 255; blue, 255 }  ,draw opacity=1 ][line width=0.75] [line join = round][line cap = round] (326.85,153.2) .. controls (328.11,153) and (329.36,152.8) .. (330.61,152.6) ;
\draw  [color={rgb, 255:red, 255; green, 255; blue, 255 }  ,draw opacity=1 ][line width=0.75] [line join = round][line cap = round] (321.84,229.07) .. controls (323.1,228.96) and (324.36,228.85) .. (325.62,228.74) ;
\draw  [color={rgb, 255:red, 255; green, 255; blue, 255 }  ,draw opacity=1 ][line width=0.75] [line join = round][line cap = round] (321.84,232.07) .. controls (323.1,231.96) and (324.36,231.85) .. (325.62,231.74) ;
\draw  [color={rgb, 255:red, 255; green, 255; blue, 255 }  ,draw opacity=1 ][line width=0.75] [line join = round][line cap = round] (336.3,292.87) .. controls (337.56,292.7) and (338.81,292.52) .. (340.07,292.34) ;
\draw  [color={rgb, 255:red, 255; green, 255; blue, 255 }  ,draw opacity=1 ][line width=0.75] [line join = round][line cap = round] (334.85,296.17) .. controls (336.1,295.99) and (337.36,295.81) .. (338.61,295.63) ;
\draw  [color={rgb, 255:red, 255; green, 255; blue, 255 }  ,draw opacity=1 ][line width=0.75] [line join = round][line cap = round] (321.3,320.87) .. controls (322.56,320.7) and (323.81,320.52) .. (325.07,320.34) ;
\draw  [color={rgb, 255:red, 255; green, 255; blue, 255 }  ,draw opacity=1 ][line width=0.75] [line join = round][line cap = round] (321.3,323.87) .. controls (322.56,323.7) and (323.81,323.52) .. (325.07,323.34) ;
\draw  [color={rgb, 255:red, 255; green, 255; blue, 255 }  ,draw opacity=1 ][line width=0.75] [line join = round][line cap = round] (433.94,137.02) .. controls (435.06,136.95) and (436.17,136.88) .. (437.29,136.8) ;
\draw  [color={rgb, 255:red, 255; green, 255; blue, 255 }  ,draw opacity=1 ][line width=0.75] [line join = round][line cap = round] (434.84,139.84) .. controls (435.93,139.77) and (437.02,139.7) .. (438.12,139.63) ;
\draw  [color={rgb, 255:red, 255; green, 255; blue, 255 }  ,draw opacity=1 ][line width=0.75] [line join = round][line cap = round] (449.84,213.84) .. controls (450.93,213.77) and (452.02,213.7) .. (453.12,213.63) ;
\draw  [color={rgb, 255:red, 255; green, 255; blue, 255 }  ,draw opacity=1 ][line width=0.75] [line join = round][line cap = round] (447.84,216.84) .. controls (448.93,216.77) and (450.02,216.7) .. (451.12,216.63) ;
\draw  [color={rgb, 255:red, 255; green, 255; blue, 255 }  ,draw opacity=1 ][line width=0.75] [line join = round][line cap = round] (464.84,277.77) .. controls (465.93,277.75) and (467.03,277.72) .. (468.12,277.7) ;
\draw  [color={rgb, 255:red, 255; green, 255; blue, 255 }  ,draw opacity=1 ][line width=0.75] [line join = round][line cap = round] (463.84,280.77) .. controls (464.93,280.75) and (466.03,280.72) .. (467.12,280.7) ;
\draw  [color={rgb, 255:red, 255; green, 255; blue, 255 }  ,draw opacity=1 ][line width=0.75] [line join = round][line cap = round] (462.86,345) .. controls (463.94,344.82) and (465.02,344.65) .. (466.1,344.47) ;
\draw  [color={rgb, 255:red, 255; green, 255; blue, 255 }  ,draw opacity=1 ][line width=0.75] [line join = round][line cap = round] (464.16,347.89) .. controls (465.33,347.67) and (466.5,347.45) .. (467.66,347.22) ;

\draw (219,404) node [anchor=north west][inner sep=0.75pt]  [font=\scriptsize] [align=left] {\begin{minipage}[lt]{88.21pt}\setlength\topsep{0pt}
\begin{center}
$\displaystyle r+1$ vertical fibers
\end{center}
$\displaystyle t =\zeta ^{j}\overline{t} ,\ j=1,\dotsc ,r+1$
\end{minipage}};
\draw (603,155) node [anchor=north west][inner sep=0.75pt]   [align=left] {$ $};
\draw (569,230) node [anchor=north west][inner sep=0.75pt]  [font=\scriptsize] [align=left] {\begin{minipage}[lt]{70.9pt}\setlength\topsep{0pt}
\begin{center}
corresponding\\$\displaystyle r+1$ horizontal fibers
\end{center}

\end{minipage}};

\end{tikzpicture}

}
    \caption{Visual intuition behind the code construction: the picture gives an idea of how to visualise the set $S_{\bar{t}}$ of $(r+1)^2$ \emph{nice} points obtained from a \emph{nice} $\bar{t}$. Each point lies on exactly one vertical fiber $\Pi_t^{-1}(\zeta^j\bar{t})$, $j=0, \ldots,r$, and on exactly one horizontal fiber $\Pi_x^{-1}(\bar{x})$, where $\bar{x}$ is one of the distinct $r+1$ solutions of $\PP_{\bar{t}}(x)=0$. In the picture, the vertical fibers are depicted in red, the horizontal ones in blue and the curve $\MM_r$ as the dotted curve in black. With notations as in \cref{def:evalpoints}, each point $P_{\bar{i},\bar{j}}^{(\bar{\ell})}$ is marked with two colours, that help visualising its two corresponding sets $H_{\bar{i}}^{(\bar{\ell})}$ and $V_{\bar{j}}^{(\bar{\ell})}$. Indeed, for any point $P_{\bar{i},\bar{j}}^{(\bar{\ell})}$, the remaining points with the same contour colour are those in the set $V_{\bar{j}}^{(\bar{\ell})}$, while the remaining points with the same filling colour are those in the set $H_{\bar{i}}^{(\bar{\ell})}$. Note that the intersection of each vertical and horizontal fiber consists only of $2$ points. This is represented in the picture by leaving some white space around the third point where, in the two-dimensional drawing, each vertical and each horizontal fiber appear to be overlapping.}
    \label{fig:visualintuition}
\end{figure}
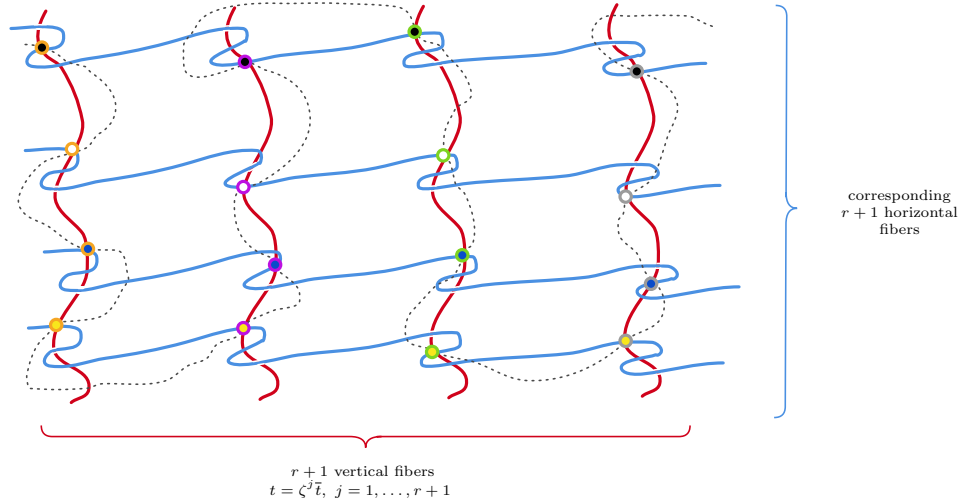

\begin{definition}
    \label[definition]{def:code}
    Let $r\in \mathbb{Z}_{\geq 3}$ odd and $q \equiv 1\pmod{r+1}$. Let $\E_r$ as in \cref{eq:Erdef} and $\M_r$ as in \cref{eq:Mrdef} be defined over $\Fqm$, where $m$ is a positive integer such that $\mathcal{G}_m \neq \emptyset$.
    Choose subsets $\mathcal{G}_m^{(1)},\ldots, \mathcal{G}_m^{(b)}$ of $\mathcal{G}_m$, where $|\mathcal{G}_m|=(r+1)M$ and $b\in \mathbb{Z}$, $1\leq b \leq M$. Let $B:=\cup_{\ell=1}^b \mathcal{G}_m^{(\ell)}$ and $S_B$ as in \cref{def:evalpoints}.

    Furthermore, consider the $\Fqm$-vector space
    \begin{align}
    \label{eq:L:definition}
    L:= \langle x^it^j, x^{r-1}t^h\mid i=1,\ldots, r-2, \  j=0,\ldots, r-1, \ h=0,\ldots, r-2 \rangle,
    \end{align}
    and the linear map evaluating the functions in $L$ at the points in $S_B$ 
    \begin{align*}
        \mathrm{ev}_{S_B}: \ &L \longrightarrow \left(\Fqm\right)^n\\
        &f \longmapsto \left(f(P_{1,1}^{(1)}), \ldots, f(P_{r+1,r+1}^{(b)})\right). 
    \end{align*}
    Note that the functions in $L$ have no poles at the points in $S_B$.
    For more compact notations, we also write $\mathrm{ev}_{S_B}(f)=\left(f(P_{i,j}^{(\ell)})\right)_{i,j=1,\ldots,r+1}^{\ell=1,\ldots,b}$.
    We define an AG code $\mathfrak{C}(B,L)$ of length $n:=|S_B|=b(r+1)^2$ on $\E_r$ as the image of $\mathrm{ev}_{S_B}$, that is,
\begin{equation}
\label{eq:codedef:gen}
    \mathfrak{C}(B,L):=\left\{ \left(f(P_{i,j}^{(\ell)})\right)_{i,j=1,\ldots,r+1}^{\ell=1,\ldots,b} \ \biggm| \ f\in L\right\}.
\end{equation}
\end{definition}

\begin{remark}
    \label[remark]{rem:L:definition}
    The elements in the vector space $L$ defined in \cref{eq:L:definition} are polynomials in $x$ and $t$ with coefficients in $\Fqm$ and such that, for any $f\in L\setminus\{0\}$, both $\mathrm{deg}_x \ f\leq r-1$ and $\mathrm{deg}_t \ f\leq r-1$. More precisely, any element $f(x,t)\in L$ can be written as
    \begin{equation*}
        f(x,t) =  \sum_{i=1}^{r-2}\sum_{j=0}^{r-1}a_{ij}x^it^j + x^{r-1}\sum_{h=0}^{r-2}b_{h}t^h,
    \end{equation*}
    $a_{ij}, b_h\in \Fqm$.
    Moreover, from \cref{eq:L:definition}, it is also clear that the dimension of $L$ as an $\Fqm$-vector space is $(r-2)r + (r-1) = r(r-1) - 1$.
\end{remark}

We now investigate the dimension, minimum distance, locality and availability of a code $\CC(B,L)$.

\section{Parameters of the codes}
\label{sec:parameters}

In this section, we investigate the dimension, minimum distance, locality and availability of a code $\CC(B,L)$ as in \cref{def:code}. Throughout the section, for less cumbersome notations, we simply denote the set of evaluation points $S_B$ of $\CC(B,L)$ by $S$. 

\subsection{Dimension, locality and availability}

We start by computing the dimension of $\CC(B,L)$.

\begin{lemma}
\label[lemma]{lemma:genconstruction:dim}
    A code $\mathfrak{C}(B,L)$ defined in \cref{def:code} has dimension $k=r(r-1) - 1$. 
\end{lemma}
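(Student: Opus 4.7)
The plan is to show that the evaluation map $\mathrm{ev}_{S_B}: L \to (\mathbb{F}_{q^m})^n$ is injective. Combined with the fact that $\dim_{\mathbb{F}_{q^m}} L = r(r-1) - 1$ (established in the Remark on $L$), this immediately yields $\dim \mathfrak{C}(B,L) = r(r-1) - 1$.

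To prove injectivity, take $f \in L$ with $f(P_{i,j}^{(\ell)}) = 0$ for all admissible $(i,j,\ell)$ and write it in the form
\begin{equation*}
f(x,t) = \sum_{i=1}^{r-2} x^i p_i(t) + x^{r-1} q(t),
\end{equation*}
where $p_i(t) \in \mathbb{F}_{q^m}[t]$ has $\deg p_i \leq r-1$ and $q(t) \in \mathbb{F}_{q^m}[t]$ has $\deg q \leq r-2$. The key structural feature exploited is that $\deg_x f \leq r-1$ and $\deg_t f \leq r-1$, whereas at each ``slice'' we evaluate at $r+1$ distinct values of one of the two variables.

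First, I fix $\ell \in \{1,\ldots,b\}$ and $j \in \{0,\ldots,r\}$, and consider the univariate polynomial $g(x) := f(x, \zeta^j \bar{t}^{(\ell)}) \in \mathbb{F}_{q^m}[x]$. Since $\bar{t}^{(\ell)}$ is nice, the $r+1$ roots $\bar{x}_1^{(\ell)}, \ldots, \bar{x}_{r+1}^{(\ell)}$ of $\mathcal{P}_{\bar{t}^{(\ell)}}(T)$ are pairwise distinct and $g$ vanishes at all of them; but $\deg g \leq r-1 < r+1$, forcing $g \equiv 0$. Reading off the coefficient of each power of $x$ gives $p_i(\zeta^j \bar{t}^{(\ell)}) = 0$ for $i = 1, \ldots, r-2$ and $q(\zeta^j \bar{t}^{(\ell)}) = 0$. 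Now I fix a single $\ell$ (possible since $b \geq 1$) and let $j$ range over $\{0, \ldots, r\}$: because $\bar{t}^{(\ell)} \neq 0$ and $\zeta$ is a primitive $(r+1)$-th root of unity, the values $\zeta^j \bar{t}^{(\ell)}$ are $r+1$ pairwise distinct elements of $\mathbb{F}_{q^m}$. Each $p_i$ then has $r+1$ distinct zeros yet $\deg p_i \leq r-1$, so $p_i \equiv 0$; similarly $q \equiv 0$ since $\deg q \leq r-2 < r+1$. Hence $f = 0$.

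There is no substantive obstacle in this argument; it is really a two-step application of the fact that a nonzero polynomial cannot have more roots than its degree. The only minor points to verify are that $\bar{t}^{(\ell)} \neq 0$ (implicit in the setup, since each $\mathcal{G}_m^{(\ell)}$ is a full $\zeta$-orbit of size $r+1$, which excludes $0$) and that the $\bar{x}_i^{(\ell)}$ are genuinely $r+1$ distinct values, which is precisely the definition of niceness of $\bar{t}^{(\ell)}$.
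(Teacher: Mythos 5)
Your proof is correct and follows essentially the same strategy as the paper's: write $f$ as a polynomial in $x$ with coefficients in $\Fqm[t]$, restrict to a vertical fiber to conclude each coefficient vanishes at $\zeta^j\bar{t}^{(\ell)}$ (since a degree-$\leq r-1$ polynomial in $x$ cannot have $r+1$ distinct roots), then observe each coefficient polynomial in $t$ has $r+1$ distinct zeros but degree $\leq r-1$. Your version is slightly more careful in spelling out why the $\zeta^j\bar{t}^{(\ell)}$ are distinct and in noting that a single $\ell$ already suffices, but the underlying argument is the same.
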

\begin{proof}
To prove the lemma, it suffices to show that the map $\mathrm{ev}_S$ is injective, so that $k:=\mathrm{dim} \ \mathfrak{C}= \mathrm{dim}_{\Fqm} \ L$, since, as observed in \cref{rem:L:definition}, $\mathrm{dim}_{\Fqm} \ L=r(r-1) - 1$.

We start by noting that any element $f \in L$ can be written as $f=\sum_{s=1}^{r-1}a_s(t)x^s$, where $a_s(t)\in \Fqm[t]$ is a polynomial of degree $\mathrm{deg} \ a_s(t)\leq r-1$ for all $s=1,\ldots, r-1$.
Let now $f=\sum_{s=1}^{r-1}a_s(t)x^s\in \mathrm{ker}(ev_S)$, and observe that $ev_S(f)=(0,\ldots,0)$ implies in particular that, for any $\ell=1,\ldots, b$ and any $j=1,\dots, r+1$, $ev_S|_{V_j^{(\ell)}}(f)=(0,\ldots,0)$. Now, by assumption, $f(x,\bar{t}_j^{(\ell)})$ is a univariate polynomial of degree at most $r-1$ having $r+1$ distinct zeros, namely $\bar{x}_i^{(\ell)}$, for $i=1,\ldots, r+1$. This implies that $a_s(\bar{t}_j^{(\ell)}) = 0$ for all $s=1,\ldots, r+1$. Since, for all $s=1,\ldots, r+1$ this happens for all $j=1,\ldots, r+1$, this implies in particular that $a_s(t) \equiv 0$ as a polynomial in $\Fqm[t]$, for all $s=1,\ldots,r+1$. This shows that $f=\sum_{s=1}^{r-1}a_s(t)x^s\equiv 0$ as a polynomial in $\Fqm[x,t]$, and hence the evaluation map $ev_S$ is injective. The dimension $k$ of the code is therefore equal to the dimension of $L$ as a vector space over $\Fqm$, that is $k=r(r-1) - 1$.
\end{proof}

The following lemma, concerning the locality and availability of a code $\CC(B,L)$, is inspired by (and can be seen as the natural counterpart of) \cite[Lemma 3.4]{AAAOAVA24}.

\begin{lemma}
\label[lemma]{lemma:genconstruction:recovery}
    A code $\mathfrak{C}(B,L)$ as in \cref{def:code} has locality $r$ and availability $2$. More precisely, for any $f\in L\setminus\{0\}$, each symbol $f(P_{\bar{i},\bar{j}}^{(\bar{\ell})})$ of the codeword $\mathfrak{c} := \left(f(P_{i,j}^{(\ell)})\right)_{i,j=1,\ldots,r+1}^{\ell=1,\ldots,b}$ has two distinct recovery sets, namely
    \begin{align*}
        f(H_{\bar{i}}^{(\bar{\ell})})&:=\left\{f(P_{\bar{i},j}^{(\bar{\ell})}) \mid j=1,\ldots,r+1 \ \mbox{and} \ j\neq \bar{j}\right\},\\
        f(V_{\bar{j}}^{(\bar{\ell})})&:=\left\{f(P_{i,\bar{j}}^{(\bar{\ell})}) \mid i=1,\ldots,r+1 \ \mbox{and} \  i\neq \bar{i}\right\}.
    \end{align*}
    We refer to $f(H_{\bar{i}}^{(\bar{\ell})})$ (resp. $f(V_{\bar{j}}^{(\bar{\ell})})$) as a \emph{horizontal} (resp. \emph{vertical}) recovery set for $f(P_{\bar{i},\bar{j}}^{(\bar{\ell})})$ (see \cref{eq:horizontalset} and \cref{eq:verticalset}).
\end{lemma}
\begin{proof}
Consider $f=\sum_{s=1}^{r-1}a_s(t)x^s \in L\setminus\{0\}$, where $a_s(t)\in \Fqm[t]$ is a polynomial of degree $\mathrm{deg} \ a_s(t)\leq r-1$ for all $s=1,\ldots, r-1$, and the codeword $\mathfrak{c}$ obtained by evaluating $f$ at the points in $S$, that is,  
\begin{equation*}
    \mathfrak{c} := \left(f(P_{i,j}^{(\ell)})\right)_{i,j=1,\ldots,r+1}^{\ell=1,\ldots,b}.
\end{equation*}
If $\mathfrak{c}$ has a missing symbol, which without loss of generality we can assume to be $f(P_{1,1}^{(1)})$, then we proceed as follows. 

Suppose first that we wish to recover $f(P_{1,1}^{(1)})$ using the vertical recovery set $f(V_1^{(1)})$. Since the points in $V_1^{(1)}$ all lie on the same vertical fiber $\Pi_t^{-1}(\bar{t}_1^{(1)})$, for less cumbersome notations, throughout the rest of the first part of the proof we simply write $t=\bar{t}_1^{(1)}$ and $x_i = \bar{x}_i^{(1)}$. 
Then, to recover $f(P_{1,1}^{(1)})$ from the values $f(P_{i,1}^{(1)})$, $i=2,\ldots, r+1$, (that is, from the values in $f(V_1^{(1)})$) and from the coordinates of the points in $V_1^{(1)}$, 
we can use Lagrange interpolation to retrieve the coefficients $a_1(t), \ldots, a_{r-1}(t) \in \Fqm$ of the polynomial $f=\sum_{s=1}^{r-1}a_s(t)x^s.$
Indeed, by assumption we know the $r$ values $f(P_{i,1}^{(1)})$, $i=2,\ldots, r+1$, from which the coefficients $a_1(t), \ldots, a_{r-1}(t)$ can be interpolated. The missing symbol $f(P_{1,1}^{(1)})$ can hence be computed by
\begin{equation*}
    f(P_{1,1}^{(1)})=\sum_{s=1}^{r-1}a_s(t)x_1^s.
\end{equation*}

Suppose now that we wish to recover $f(P_{1,1}^{(1)})$ using instead the horizontal recovery set $f(H_1^{(1)})$. We start by observing that the function $f=\sum_{s=1}^{r-1}a_s(t)x^s$ can be rewritten as $f=\sum_{s=0}^{r-1}d_s(x)t^s$, for $d_s(x)\in \Fqm[x]$ with $\mathrm{deg}\ d_s(x)\leq r-1$. 
Now, mutatis mutandis, the recovery strategy is entirely similar as above, since the coefficients of the polynomial $f=\sum_{s=0}^{r-1}d_s(x)t^s$ can be interpolated from the $r$ values $f(P_{1,j}^{(1)})$, $j=2,\ldots, r+1$. 

\end{proof}

\subsection{Estimates on the minimum distance}
\label{subsec:d:bound}

In this subsection, after discussing a general known upper bound for the minimum distance of the codes $\CC(B,L)$, we derive a lower bound for the minimum distance of these codes, which is computed by studying the valuation of the functions in $L$ at certain places of $\Fqm(\MM_r)$. 

From \cite[Eq.~(16)]{TBF16}, we have the following Singleton-type upper bound for the minimum distance $d$ of an LRC with length $n$, dimension $k$, locality $r$ and availability $2$:
    \begin{equation}
    \label{eq:tbf:upper}
        d \leq n - \left(k - 1 + \left\lfloor\frac{k-1}{r}\right\rfloor + \left\lfloor\frac{k-1}{r^2}\right\rfloor \right).
    \end{equation}
For a code $\CC(B,L)$, which has length $n=b(r+1)^2$ (see \cref{def:code}) and dimension $k=r(r-1)-1$ by \cref{lemma:genconstruction:dim}, this bound reads as
     \begin{equation*}
        d \leq b(r+1)^2 - \left(r^2 - r - 2 + \left\lfloor\frac{r(r-1)-2}{r}\right\rfloor + \left\lfloor\frac{r(r-1)-2}{r^2}\right\rfloor \right),
    \end{equation*}
hence we obtain
    \begin{align}
    \label{eq:tbf:upper:explicit}
    \nonumber
        d &\leq b(r+1)^2 - \left(r^2 - r - 2 + \left\lfloor\frac{r(r-1)-2}{r}\right\rfloor + \left\lfloor\frac{r(r-1)-2}{r^2}\right\rfloor \right)\\
        &= b(r+1)^2 - \left(r^2 - r - 2 + r - 2 \right)\\ \nonumber
        &= b(r+1)^2 - (r^2 - 4).
    \end{align}

Throughout the rest of this section, we investigate a lower bound for $d$. The underlying idea is to look at the valuations of the functions in $L$ at places of $\Fqm(\MM_r)$ corresponding to points of $\MM_r$ that are not evaluation points of $\CC(B,L)$. More precisely, we study the valuations of the elements of $L$ at the places of $\Fqm(\MM_r)$ lying over the pole of $t$. In \cref{prop:bound:d}, we find a lower bound on $d$ that turns out to be sharp when $r=3$, except when $b=1$. However, when $b=1$, we are able to actually compute the minimum distance in all cases, see \cref{prop:mindist:beq1}. Explicit examples for $r=3$ are collected in \cref{table:codes}.

We start by studying the splitting behaviour of $(t=\infty)$ in $\Fqm(x,t)/\Fqm(t)$. To this aim, we determine the Newton arc of $\PP_t(T)$ with respect to $(t=\infty)$. 

From observations in \cref{sec:code:construction}, we already know that $[\Fqm(t,x):\Fqm(t)]=r+1$ and that the extension $\Fqm(x,t)/\Fqm(t)$ is defined by $\PP_t(x)=0$, where $\PP_t(T)$ (see \cref{eq:PPt}) is an irreducible polynomial of $\Fqm(t)[T]$. Let now $\mathcal{O}_{(t=\infty)}$ be the DVR associated to the place $(t=\infty)$ in $\Fqm(t)$ and consider
\begin{equation*}
	\tilde{\PP_t}(T):=\frac{1}{t^{r+1}}\PP_t(T) = \frac{1}{t^{r+1}}T^{r+1} + \frac{2}{t^{r+1}}T^{\frac{r+1}{2}} - \frac{1}{t^{r+1}}T^3 + \frac{t^{r+1} + 1}{t^{r+1}}T^2 - T + \frac{1}{t^{r+1}} \in \mathcal{O}_{(t=\infty)}[T].
\end{equation*}  
Modulo a vertical translation, the Newton arc of $\PP_t(T)$ with respect to $(t=\infty)$ coincides with that of $\tilde{\PP_t}(T)$, meaning that the slopes of the segments that form the two arcs are the same (see \cref{subsec:newton:background}). Therefore, determining the Newton arc of $\tilde{\PP_t}(T)$ also yields the Newton arc of $\PP_t(T)$ and is enough to deduce the splitting behaviour of $(t=\infty)$ in $\Fqm(t,x)/\Fqm(t)$.

\begin{lemma}
\label[lemma]{lemma:newtonarc}
    For any $r\in \mathbb{Z}_{\geq 3}$, the Newton arc of $\tilde{\PP_t}(T)$ with respect to $(t=\infty)$ consists of three line segments $\mathcal{L}_1$, $\mathcal{L}_2$ and $\mathcal{L}_3$ with slopes $\rho(\mathcal{L}_1)= -(r+1)/1$, $\rho(\mathcal{L}_2)= -0/1$ and $\rho(\mathcal{L}_3)= ((r+1)/2) / ((r-1)/2)$. Furthermore, each segment has the following associated polynomials:
    \begin{align}
    \label{eq:gamma:delta:pols}
    \nonumber
    \gamma_{\mathcal{L}_1}(T) &= - T + 1, \quad &&\delta_{\mathcal{L}_1}(T)   = T - 1,\\
    \gamma_{\mathcal{L}_2}(T)  &= - T + 1, \quad &&\delta_{\mathcal{L}_2}(T)   = T - 1,\\\nonumber
    \gamma_{\mathcal{L}_3}(T)  &= T^{(r+1)-2} + 1 = T^{r-1} + 1 = (T^{(r-1)/2})^2 + 1, \quad &&\delta_{\mathcal{L}_3}(T)  = T^2 + 1.
\end{align}
\end{lemma}
\begin{proof}
    The lemma follows simply by observing that the support set (see \cref{def:supportset}) $\mathcal{S}_{\tilde{\PP_t}(T)}$ of $\tilde{\PP_t}(T)$ at $(t=\infty)$ is, for $r\geq 5$,
    \begin{equation*}
	\mathcal{S}_{\tilde{\PP_t}(T)} = \{(0,r+1), (1,0), (2,0), (3,r+1), ((r+1)/2,r+1), (r+1,r+1)\},
    \end{equation*}
    while for $r=3$ is
    \begin{equation*}
	\mathcal{S}_{\tilde{\PP_t}(T)} = \{(0,r+1), (1,0), (2,0), (3,r+1), (r+1,r+1)\}.
    \end{equation*}
    In both cases, the shape of the Newton arc is hence the same. Note that the way in which we write the slopes in the statement is instrumental to make explicit their expression as fractions $-a/b$, where $a,b\in \mathbb{Z}$, $b>0$, $\mathrm{gcd} \ (a,b) = 1$, so that for all $i=1,2,3$ it is immediate to obtain the polynomials $\gamma_{\mathcal{L}_i}(T)$ and $\delta_{\mathcal{L}_i}(T)$, as defined in \cref{subsec:newton:background}.
\end{proof}

\begin{center}
\begin{figure}[ht]
\adjustbox{scale=0.7}{%
\tikzset{every picture/.style={line width=0.75pt}} 
\begin{tikzpicture}[x=0.75pt,y=0.75pt,yscale=-1,xscale=1]

\draw [color={rgb, 255:red, 9; green, 96; blue, 191 }  ,draw opacity=1 ][line width=1.5]  (43.75,177.98) -- (386.75,177.98)(59.63,-98.5) -- (59.63,204) (379.75,172.98) -- (386.75,177.98) -- (379.75,182.98) (54.63,-91.5) -- (59.63,-98.5) -- (64.63,-91.5) (90.63,172.98) -- (90.63,182.98)(121.63,172.98) -- (121.63,182.98)(152.63,172.98) -- (152.63,182.98)(183.63,172.98) -- (183.63,182.98)(214.63,172.98) -- (214.63,182.98)(245.63,172.98) -- (245.63,182.98)(276.63,172.98) -- (276.63,182.98)(307.63,172.98) -- (307.63,182.98)(338.63,172.98) -- (338.63,182.98)(369.63,172.98) -- (369.63,182.98)(54.63,146.98) -- (64.63,146.98)(54.63,115.98) -- (64.63,115.98)(54.63,84.98) -- (64.63,84.98)(54.63,53.98) -- (64.63,53.98)(54.63,22.98) -- (64.63,22.98)(54.63,-8.02) -- (64.63,-8.02)(54.63,-39.02) -- (64.63,-39.02)(54.63,-70.02) -- (64.63,-70.02) ;
\draw   ;
\draw [color={rgb, 255:red, 208; green, 2; blue, 27 }  ,draw opacity=1 ]   (60.1,-70.5) -- (91.1,177.5) ;
\draw [color={rgb, 255:red, 208; green, 2; blue, 27 }  ,draw opacity=1 ]   (91.1,177.5) -- (121.4,178) ;
\draw [color={rgb, 255:red, 208; green, 2; blue, 27 }  ,draw opacity=1 ]   (121.4,178) -- (307,-68) ;
\draw  [color={rgb, 255:red, 208; green, 2; blue, 27 }  ,draw opacity=1 ][fill={rgb, 255:red, 208; green, 2; blue, 27 }  ,fill opacity=1 ] (153.22,-72.19) .. controls (153.23,-71.64) and (153.69,-71.2) .. (154.24,-71.22) .. controls (154.8,-71.23) and (155.23,-71.69) .. (155.22,-72.24) .. controls (155.2,-72.79) and (154.74,-73.23) .. (154.19,-73.21) .. controls (153.64,-73.2) and (153.2,-72.74) .. (153.22,-72.19) -- cycle ;
\draw  [color={rgb, 255:red, 208; green, 2; blue, 27 }  ,draw opacity=1 ][fill={rgb, 255:red, 208; green, 2; blue, 27 }  ,fill opacity=1 ] (213.22,-72) .. controls (213.23,-71.45) and (213.69,-71.01) .. (214.24,-71.03) .. controls (214.8,-71.04) and (215.23,-71.5) .. (215.22,-72.06) .. controls (215.2,-72.61) and (214.74,-73.04) .. (214.19,-73.03) .. controls (213.64,-73.01) and (213.2,-72.55) .. (213.22,-72) -- cycle ;

\draw (50.17,188.4) node [anchor=north west][inner sep=0.75pt]  [font=\scriptsize,xslant=0.02]  {$0$};
\draw (118.17,192.4) node [anchor=north west][inner sep=0.75pt]  [font=\scriptsize,xslant=0.02]  {$2$};
\draw (87.17,192.4) node [anchor=north west][inner sep=0.75pt]  [font=\scriptsize,xslant=0.02]  {$1$};
\draw (149.17,192.4) node [anchor=north west][inner sep=0.75pt]  [font=\scriptsize,xslant=0.02]  {$3$};
\draw (202.17,191.4) node [anchor=north west][inner sep=0.75pt]  [font=\scriptsize,xslant=0.02]  {$\frac{r+1}{2}$};
\draw (296.17,193.4) node [anchor=north west][inner sep=0.75pt]  [font=\scriptsize,xslant=0.02]  {$r+1$};
\draw (26.17,-74.6) node [anchor=north west][inner sep=0.75pt]  [font=\scriptsize,xslant=0.02]  {$r+1$};

\end{tikzpicture}
}
\caption{Newton arc of $\tilde{\PP_t}(T)$ with respect to $(t=\infty)$. \label{fig:newtonarc}}
\end{figure}
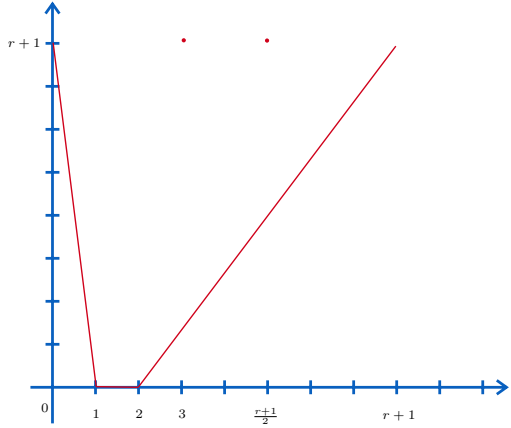
\end{center}

By \cref{thm:newtonarc:splitting} and \cref{cor:newtonarc:splitting}, there are only two possibilities for the splitting behaviour of $(t=\infty)$ in $\Fqm(t,x)/\Fqm(t)$, which are described in the following lemma.
\begin{lemma}
\label[lemma]{lemma:splitting}
    Let notations be as in \cref{lemma:newtonarc}. Then:
    \begin{enumerate}
        \item If $-1$ is a square in $\Fqm$, the place $(t=\infty)$ has four places lying over it in $\Fqm(t,x)/\Fqm(t)$, namely two places $P_1, P_2$ with $e(P_i|(t=\infty))=1$ and $f(P_i|(t=\infty))=1$, for $i=1,2$, and two places $P_3, P_4$ with $e(P_i|(t=\infty))=(r-1)/2$ and $f(P_i|(t=\infty))=1$, for $i=3,4$.
        \item If $-1$ is not a square in $\Fqm$, the place $(t=\infty)$ has three places lying over it in $\Fqm(t,x)/\Fqm(t)$, namely two places $P_1, P_2$ with $e(P_i|(t=\infty))=1$ and $f(P_i|(t=\infty))=1$, for $i=1,2$, and one place $P_3$ with $e(P_3|(t=\infty))=(r-1)/2$ and $f(P_3|(t=\infty))=2$.
    \end{enumerate}
\end{lemma}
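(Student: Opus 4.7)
The plan is to apply \cref{cor:newtonarc:splitting} directly to the polynomial $\PP_t(T)$ (or equivalently $\tilde{\PP_t}(T)$, which differs only by a vertical translate of the Newton arc) at the place $Q=(t=\infty)$ of $\Fqm(t)$, using the Newton arc description computed in \cref{lemma:newtonarc}. Recall from the discussion preceding \cref{lemma:newtonarc} that $[\Fqm(t,x):\Fqm(t)]=r+1$ and that $\PP_t(x)=0$ is a minimal equation for $x$ over $\Fqm(t)$, so the hypotheses of \cref{cor:newtonarc:splitting} on the degrees are satisfied.

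First I would verify the hypothesis of \cref{cor:newtonarc:splitting} that each of the polynomials $\delta_{\mathcal{L}_i}(T)$ is squarefree in $(\mathcal{O}_Q/Q)[T]\cong \Fqm[T]$. By \cref{lemma:newtonarc}, we have $\delta_{\mathcal{L}_1}(T)=\delta_{\mathcal{L}_2}(T)=T-1$, which are trivially squarefree, and $\delta_{\mathcal{L}_3}(T)=T^2+1$, which is squarefree because $\mathrm{char}(\Fqm)\neq 2$ (\cref{rem:characteristic}). So the corollary applies and every place of $\Fqm(t,x)$ lying over $(t=\infty)$ is obtained exactly once from some pair $(\mathcal{L}_i,\delta_{\mathcal{L}_i,j})$, with ramification index equal to the denominator $b$ of the slope $\rho(\mathcal{L}_i)=-a/b$ and relative degree equal to $\deg\delta_{\mathcal{L}_i,j}$.

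Next I would read off the contributions edge by edge. For $\mathcal{L}_1$, the slope is $-(r+1)/1$, so $b=1$; since $\delta_{\mathcal{L}_1}(T)=T-1$ is already irreducible of degree $1$, we get exactly one place $P_1$ with $e(P_1\mid (t=\infty))=1$ and $f(P_1\mid (t=\infty))=1$. The same reasoning for $\mathcal{L}_2$, with slope $0/1$, yields a place $P_2$ with $e=f=1$. For $\mathcal{L}_3$, the slope $(r+1)/(r-1)$ simplifies (since $r$ is odd, so $\gcd(r+1,r-1)=2$) to $-a/b$ with $a=-(r+1)/2$ and $b=(r-1)/2$. Hence every irreducible factor of $\delta_{\mathcal{L}_3}(T)=T^2+1$ over $\Fqm$ produces a place of ramification index $(r-1)/2$ and relative degree equal to its degree. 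This is where the case distinction enters: if $-1$ is a square in $\Fqm$, then $T^2+1=(T-i)(T+i)$ for some $i\in\Fqm$, giving two places $P_3,P_4$, each with $e=(r-1)/2$ and $f=1$; otherwise $T^2+1$ is irreducible over $\Fqm$, giving a single place $P_3$ with $e=(r-1)/2$ and $f=2$.

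As a consistency check, I would confirm the fundamental identity $\sum_i e(P_i\mid Q)f(P_i\mid Q)=[\Fqm(t,x):\Fqm(t)]=r+1$: in the first case this is $1+1+(r-1)/2+(r-1)/2=r+1$, and in the second case $1+1+(r-1)/2\cdot 2=r+1$. No serious obstacle is anticipated, since the whole argument is essentially bookkeeping on top of \cref{lemma:newtonarc} and \cref{cor:newtonarc:splitting}; the only subtlety is keeping the normalization of the slope of $\mathcal{L}_3$ consistent with the convention $\gcd(a,b)=1$, $b>0$, which is exactly what forces the ramification index to be $(r-1)/2$ rather than $r-1$.
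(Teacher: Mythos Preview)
Your proposal is correct and follows essentially the same approach as the paper: both apply \cref{cor:newtonarc:splitting} to the Newton arc data from \cref{lemma:newtonarc} and observe that the case distinction hinges on whether $\delta_{\mathcal{L}_3}(T)=T^2+1$ splits over $\Fqm$. Your version is in fact more explicit than the paper's, since you verify the squarefree hypothesis, track the reduction of the slope of $\mathcal{L}_3$ to lowest terms, and check the fundamental identity, whereas the paper simply asserts that the result follows directly from \cref{thm:newtonarc:splitting} and \cref{cor:newtonarc:splitting}.
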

\begin{proof}
    The proof follows directly from \cref{thm:newtonarc:splitting} and \cref{cor:newtonarc:splitting}, simply by observing that if $-1$ is a square in $\Fqm$ then the polynomial $\delta_{\mathcal{L}_3}(T)=T^2+1$ splits over $\Fqm$, while it is irreducible in $\Fqm$ if $-1$ is not a square. In case $(1)$, the place $P_1$ corresponds to the line segment $\mathcal{L}_1$, the place $P_2$ corresponds to $\mathcal{L}_2$ and $P_3, P_4$ correspond to $\mathcal{L}_3$. In case $(2)$, similarly, the place $P_1$ corresponds to the line segment $\mathcal{L}_1$, the place $P_2$ corresponds to $\mathcal{L}_2$ and $P_3$ corresponds to $\mathcal{L}_3$.
\end{proof}

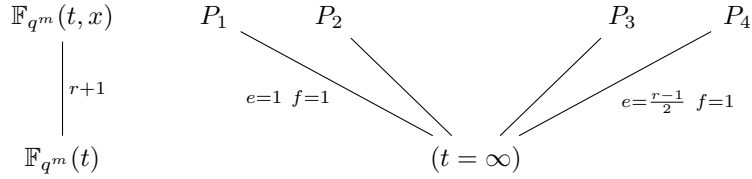
\begin{figure}[ht]
    \centering
\adjustbox{scale=1,center}{
\begin{tikzcd}
{\mathbb{F}_{q^m}(t,x)} \arrow[dd, "r+1", no head] & P_1 \arrow[rrdd, "{e=1 \ f=1}"', no head] & P_2 \arrow[rdd, no head] &                                                                                 & P_3 & P_4 \\
                                               &                                         &                          &                                                                                 &     &     \\
\mathbb{F}_{q^m}(t)                                &                                         &                          & (t=\infty) \arrow[rruu, "{e=\frac{r-1}{2} \ f=1}"', no head] \arrow[ruu, no head] &     &    
\end{tikzcd}
}
\caption{Splitting of $(t=\infty)$ in case $(1)$ of \cref{lemma:splitting} \label{fig:splitting1}}
\end{figure}

\begin{figure}[ht]
    \centering
\adjustbox{scale=1,center}{%
\begin{tikzcd}
{\Fqm(t,x)} \arrow[dd, "r+1", no head] & P_1 \arrow[rdd, "e=1 \ f=1"', no head] & P_2 \arrow[dd, "e=1 \ f=1" description, no head] & {P_3} \arrow[ldd, "e=\frac{r-1}{2} \ f=2", no head] \\
                                               &                                        &                                                  &                                                         \\
\Fqm(t)                                &                                        & (t=\infty)                                       &                                                        
\end{tikzcd}
}
    \caption{Splitting of $(t=\infty)$ in case $(2)$ of \cref{lemma:splitting} \label{fig:splitting2}}
\end{figure}
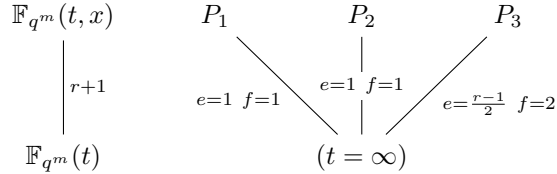

\begin{remark}
    \label[remark]{rem:-1square}
 By the law of quadratic reciprocity, $-1$ is a square modulo $q^m$ if and only if $q^m\equiv 1  \pmod{4}$. If $r=3$, since we are assuming $q\equiv 1 \pmod{r+1}$, we are hence always in case $(1)$ of \cref{lemma:splitting}. 
\end{remark}

We now compute the valuations of the generators of the vector space $L$ (see \cref{def:code}) at the places of $\Fqm(t,x)$ lying over $(t=\infty)$, both in cases $(1)$ and $(2)$ of \cref{lemma:splitting}. Observe that, to this aim, it is enough to compute the valuation of $x$ at $P_1,P_2,P_3,P_4$ (resp. $P_1,P_2,P_3$), since each generator of $L$ is a monomial involving powers of $x$ and $t$, and we already know the valuation of $t$ at $P_1,P_2,P_3,P_4$ (resp. $P_1,P_2,P_3$) from \cref{lemma:splitting}.

\begin{lemma}
    \label[lemma]{lemma:valuations:L:x}
    Let notations be as in \cref{lemma:newtonarc} and \cref{lemma:splitting}. Moreover, as before let $\Fqm(\MM_r)$ be the function field of $\MM_r$, described as $\Fqm(t,x)$ in \cref{rem:functionfield:notation}.
     
     \begin{enumerate}
        \item Suppose that $-1$ is a square in $\Fqm$.
        Then 
        \begin{align}
        \label{eq:valuations:(1):x}
            &v_{P_1}(x)= r+1,\quad &&v_{P_2}(x) = 0, \quad &&&v_{P_3}(x)= v_{P_4}(x) = - \frac{r+1}{2}.
        \end{align}
        \item Suppose that $-1$ is not a square in $\Fqm$. 
        Then
        \begin{align}
        \label{eq:valuations:(2):x}
            &v_{P_1}(x)= r+1,\quad &&v_{P_2}(x) = 0, \quad &&&v_{P_3}(x)= - \frac{r+1}{2}.
        \end{align}
    \end{enumerate}
\end{lemma}
\begin{proof}
    Both \cref{eq:valuations:(1):x} and \cref{eq:valuations:(2):x} follow directly from \cref{thm:newtonarc:splitting}, \cref{cor:newtonarc:splitting} and \cref{lemma:newtonarc}. 
\end{proof}

\begin{remark}
    \label[remark]{rem:valuations:L:generators}
    By \cref{lemma:valuations:L:x} and \cref{lemma:splitting}, we can determine the valuation at $P_1,P_2,P_3,P_4$ (resp. $P_1,P_2,P_3$) of all the generators of $L$. In case $(1)$ of \cref{lemma:splitting}, we have
    \begin{align}
    \label{eq:val:generators:1}
    \nonumber
        v_{P_1}(x^it^j) &= (r+1)i - j,\\
        v_{P_2}(x^it^j) &= -j,\\ \nonumber
        v_{P_3}(x^it^j) &= v_{P_4}(x^it^j) = - \frac{r+1}{2} \ i -  \frac{r-1}{2} \ j,
    \end{align}
    for $i = 1,\ldots, r-2, \ j = 0, \ldots, r-1$, while
    \begin{align}
    \label{eq:val:generators:2}
    \nonumber
        v_{P_1}(x^{r-1}t^h) &= (r+1)(r-1) - h,\\
        v_{P_2}(x^{r-1}t^h) &= -h,\\ \nonumber
        v_{P_3}(x^{r-1}t^h) &= v_{P_4}(x^{r-1}t^h) = - \frac{r^2-1}{2} -  \frac{r-1}{2} \ h,
    \end{align}
    for $h = 0,\ldots, r-2$, and, similarly, in case $(2)$ of \cref{lemma:splitting} we have
    \begin{align}
    \label{eq:val:generators:3}
    \nonumber
        v_{P_1}(x^it^j) &= (r+1)i - j,\\
        v_{P_2}(x^it^j) &= -j,\\ \nonumber
        v_{P_3}(x^it^j) & = - \frac{r+1}{2} \ i -  \frac{r-1}{2} \ j,
    \end{align}
    for $i = 1,\ldots, r-2, \ j = 0, \ldots, r-1$, while 
    \begin{align}
    \label{eq:val:generators:4}
    \nonumber
        v_{P_1}(x^{r-1}t^h) &= (r+1)(r-1) - h,\\
        v_{P_2}(x^{r-1}t^h) &= -h,\\ \nonumber
        v_{P_3}(x^{r-1}t^h) &= - \frac{(r-1)^2}{2} -  \frac{r-1}{2} \ h,
    \end{align}
    for $h = 0,\ldots, r-2$.
    
    From \cref{eq:val:generators:1}, \cref{eq:val:generators:2}, \cref{eq:val:generators:3} and \cref{eq:val:generators:4}, it follows that, both in case $(1)$ and $(2)$ of \cref{lemma:splitting}, the place $P_1$ is a common zero of all the generators of $L$, and hence it is a common zero of all the functions in $L$. More precisely, we have that 
    \begin{equation}
    \label{eq:minval:P1}
        \mathrm{min}_{f \in L\setminus\{0\}}\{v_{P_1}(f)\} = (r+1) - (r-1) = 2.
    \end{equation}
    On the other hand, observe that the valuations of the generators of $L$ at all the other places $P_2,P_3,P_4$ (resp. $P_2,P_3$) are negative.
\end{remark}

\begin{remark}
    \label[remark]{rem:poledivs}
    As observed in the beginning of \cref{sec:code:construction}, $\Fqm(t,x)/\Fqm(x)$ is a Kummer extension of degree $r+1$. By \cref{lemma:valuations:L:x}, if $-1$ is a square in $\Fqm$ then
    \begin{equation}
        \label{eq:x:divisor:1}
         (x) = (r+1)P_1 - \frac{r+1}{2}P_3 - \frac{r+1}{2}P_4,   
    \end{equation}
    while, if $-1$ is not a square in $\Fqm$ then
    \begin{equation}
        \label{eq:x:divisor:2}
         (x) = (r+1)P_1 - \frac{r+1}{2}P_3.
    \end{equation}
    From this and \cref{rem:valuations:L:generators}, we can compute the maximum possible order of vanishing of a function in $L$ in $\Fqm(\MM_r)$. Indeed, this is exactly the maximum of the degrees of the pole divisors of the monomials generating $L$. We have
    \begin{align}
    \label{eq:polediv:generators}
    \nonumber
        \mathrm{deg} \ (x^it^j)_\infty &= i(r+1) + jr ,\\
        \mathrm{deg} \ (x^{r-1}t^h)_\infty &= (r-1)(r+1) + hr,
    \end{align}
    for $i = 1,\ldots, r-2, \ j = 0, \ldots, r-1, \ h = 0,\ldots, r-2$, and hence 
    \begin{equation}
    \label{eq:polediv:max}
        \mathrm{max}_{f \in L\setminus\{0\}}\{\mathrm{deg} \ (f)_\infty\} = \mathrm{deg} \ (x^{r-1}t^{r-2})_\infty = 2r^2 - 2r - 1.
    \end{equation}
\end{remark}

By \cref{rem:valuations:L:generators} and \cref{rem:poledivs}, the following lower bound for the minimum distance $d$ of a code $\CC(B,L)$ holds.

\begin{proposition}
\label[proposition]{prop:bound:d}
    Let $\CC(B,L)$ be an $\Fqm$-linear code of length $n=b(r+1)^2$ as in \cref{def:code}, where $b\in \mathbb{Z}_{\geq 1}$. The following lower bound for the minimum distance $d$ of $\CC(B,L)$ holds:
        \begin{equation}
        \label{eq:bound:d}
            d \geq r^2(b-2) + r(2b+2) + b + 3.
        \end{equation}
\end{proposition}
\begin{proof}
    By construction, the minimum weight $\mathrm{w}_H(\mathfrak{c}_f)$ of the codeword $\mathfrak{c}_f \in \CC(B,L)$, obtained by evaluating $f \in L\setminus\{0\}$ at the points in $S_B$, is exactly $n - \#\{\mathrm{zeros \ of} \ f \ \mathrm{in} \ S_B\}$. The number of zeros of $f$ contained in $S_B$ is at most the total possible number of zeros of $f$ on $\MM_r$, which in turn is at most the degree of the pole divisor $(f)_\infty$ of $f$ in $\Fqm(\MM_r)$, that is,
    \begin{equation*}
       \mathrm{w}_H(\mathfrak{c}_f) = n - \#\{\mathrm{zeros \ of} \ f \ \mathrm{in} \ S_B\} \geq n - \#\{\mathrm{zeros \ of} \ f \ \mathrm{on} \ \MM_r\} \geq n - \mathrm{deg} \ (f)_\infty.
    \end{equation*}
By \cref{eq:polediv:max} in \cref{rem:poledivs}, 
    \begin{equation*}
        \mathrm{max}_{f \in L\setminus\{0\}}\{\mathrm{deg} \ (f)_\infty\} =  2r^2 - 2r - 1,
    \end{equation*}
    and, by \cref{eq:minval:P1} in \cref{rem:valuations:L:generators},
     \begin{equation*}
        \mathrm{min}_{f \in L\setminus\{0\}}\{v_{P_1}(f)\} = 2,
    \end{equation*}
 hence, for any $f \in L\setminus\{0\}$ the associated codeword $\mathfrak{c}_f$ has weight
    \begin{equation*}
        \mathrm{w}_H(\mathfrak{c}_f) \geq n - \mathrm{deg} \ (f)_\infty \geq n - (2r^2 - 2r - 1 - 2)= n - (2r^2 - 2r - 3) = r^2(b-2) + r(2b+2) + b + 3.
    \end{equation*}
    Note that the second inequality follows from \cref{eq:polediv:max} and \cref{eq:minval:P1}, since $P_1$ is a zero of all the functions in $L$ and is not contained in $S_B$. Hence, the bound in \cref{eq:bound:d} on the minimum distance follows.
\end{proof}

Observe that, if $b\geq 2$, the bound obtained in \cref{prop:bound:d} is $d\geq r^2(b-2) + r(2b+2) + b + 3 \geq 6r + 5 > 0$ for any possible value of $r$. On the other hand, if $b=1$, the bound in \cref{eq:bound:d} is $d\geq -r^2 + 4r + 4$, which is positive only for $r=3$. However, if $b=1$, we can actually compute the value of the minimum distance $d$ of $\CC(B,L)$ for any $r\in \mathbb{Z}_{\geq 3}$ odd (i.e., as in the assumptions of \cref{def:code}).

\begin{proposition}
    \label[proposition]{prop:mindist:beq1}
    Let $r\in \mathbb{Z}_{\geq 3}$ odd and let $\CC(B,L)$ be a code over $\Fqm$ as in \cref{def:code}, with locality $r$. If $b=1$, then the minimum distance $d$ is $d=n - (r^2 + 2r - 7) = (r+1)^2 - (r^2 + 2r - 7) = 8$.
\end{proposition}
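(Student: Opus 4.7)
The plan is to prove matching bounds $d\geq 8$ and $d\leq 8$. When $b=1$ the set $S$ is the full $(r+1)\times(r+1)$ grid $\{(a_i,b_k)\}$, where $a_1,\ldots,a_{r+1}$ are the distinct roots of $\PP_{\bar{t}}(T)$ and $b_k=\zeta^{k-1}\bar{t}$ for $k=1,\ldots,r+1$. Crucially, $\PP_{\bar{t}}(0)=1$ forces $a_i\neq 0$ for all $i$, and the weight of a codeword equals $(r+1)^2$ minus the number of zeros of the corresponding $f\in L$ on this grid.

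For the lower bound I would write $f\in L\setminus\{0\}$ as $f=\sum_{i=1}^{r-1}g_i(t)x^i$ with $\deg g_i\leq r-1$ for $i\leq r-2$ and $\deg g_{r-1}\leq r-2$, and introduce $J:=\{k:f(x,b_k)\equiv 0\}$, so that $q(t):=\prod_{k\in J}(t-b_k)$ divides every $g_i$. Since $\deg g_{r-1}\leq r-2$, the case $|J|\geq r$ would force $f=0$. If $|J|\leq r-2$, then for each $k\notin J$ the polynomial $f(x,b_k)$ is nonzero, has $x$-degree at most $r-1$ and zero constant term (as $L$ contains no $x^0$ monomials), so factoring $f(x,b_k)=x\cdot s(x)$ with $\deg s\leq r-2$ and using $a_i\neq 0$ yields at most $r-2$ zeros per such $k$; summing gives at most $|J|(r+1)+(r+1-|J|)(r-2)=(r+1)^2-3(r+1-|J|)\leq (r+1)^2-9$ zeros, hence weight $\geq 9$. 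If $|J|=r-1$, then $\deg g_{r-1}\leq r-2<\deg q$ forces $g_{r-1}=0$, and $g_i=c_iq(t)$ for constants $c_i\in\Fqm$ and $i\leq r-2$, so $f=q(t)p(x)$ with $p(x)=\sum_{i=1}^{r-2}c_ix^i$ nonzero, $\deg p\leq r-2$, and $p(0)=0$; writing $p(x)=x\tilde p(x)$ with $\deg\tilde p\leq r-3$, each of the two $k\notin J$ contributes at most $r-3$ zeros, for a total of at most $(r-1)(r+1)+2(r-3)=r^2+2r-7$, i.e.\ weight $\geq 8$.

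For the matching upper bound I would take $q(t):=\prod_{k=1}^{r-1}(t-b_k)$ and $p(x):=x\prod_{j=1}^{r-3}(x-a_j)$ (with $p(x)=x$ when $r=3$), and check that $f:=q(t)p(x)$ lies in $L$---it is a combination of monomials $x^it^j$ with $1\leq i\leq r-2$ and $0\leq j\leq r-1$---and that it attains exactly $r^2+2r-7$ zeros in $S$, producing a codeword of weight $8$.

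The main obstacle is the subcase $|J|=r-1$: a naive row-by-row count only yields weight $\geq 6$, and to reach $8$ one must combine \emph{both} structural features of $L$ that are absent from a generic polynomial of bidegree $(r-1,r-1)$, namely the absence of $x^0 t^j$ terms (which gives $p(0)=0$) and the strict cap $\deg g_{r-1}\leq r-2$ (which gives $\deg p\leq r-2$). Together these constraints drop the per-row contribution from the naive $r-1$ down to $r-3$, which is exactly what is needed to match the explicit construction in the upper bound.
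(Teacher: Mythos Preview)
Your proof is correct and follows essentially the same route as the paper: the same explicit function $f=q(t)p(x)$ furnishes the upper bound, and the lower bound exploits the two structural features of $L$ (the absence of $x^0$ monomials and the strict degree cap $\deg g_{r-1}\le r-2$) to force the extremal $f$ into the shape $q(t)p(x)$ with $x\mid p$ and $\deg p\le r-2$. Your explicit case split on $|J|$ is in fact cleaner and more rigorous than the paper's somewhat heuristic reduction to the case of $r-1$ full vertical fibers, and your observation that $\PP_{\bar t}(0)=1$ forces all $a_i\neq 0$ makes explicit a point the paper leaves implicit.
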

\begin{proof}
    When $b=1$, the set of evaluation points of a code $\CC(B,L)$ is simply one set of \emph{nice} points as in \cref{eq:Sbar:def}, namely
    \begin{align*}
    S_{\bar{t}}:=\{[\bar{x}: \bar{x}^{\frac{r+1}{2}} + 1 : 1 ; \zeta^j\bar{t} : 1 ] \mid j=0,\ldots, r, \ \PP_{\bar{t}}(\bar{x})=0\}. 
    \end{align*}
    Observe that, by definition of $S_{\bar{t}}$, there are exactly $r+1$ distinct values $\bar{x}_1, \ldots, \bar{x}_{r+1}$ attained by the $x$-coordinates of points in $S_{\bar{t}}$. 
    Consider the function
    \begin{equation}
    \label{eq:function:bound}
        f_{min}:=x \cdot \prod_{i=1}^{r-1} (t - \zeta^j\bar{t}) \cdot \prod_{i=1}^{r-3} (x - \bar{x}_i) \in L \setminus\{0\},
    \end{equation}
    and note that $f_{min}$ has exactly $r^2 + 2r - 7$ distinct zeros in $S_{\bar{t}}$. Indeed, each function $t - \zeta^j\bar{t}$ has exactly $r+1$ zeros in $S_{\bar{t}}$, namely the points $[\bar{x}: \bar{x}^{\frac{r+1}{2}} + 1 : 1 ; \zeta^j\bar{t} : 1 ]$ with $i=1,\ldots,r+1$, and similarly each function $x - \bar{x}_i$ vanishes on $S_{\bar{t}}$ only at the $r+1$ points $[\bar{x}_i: \bar{x}_i^{\frac{r+1}{2}} + 1 : 1 ; \zeta^j\bar{t} : 1 ]$, with $j=1,\ldots, r+1$. On the other hand, by \cref{rem:poledivs}, the function $x$ has only one zero, namely $P_1$, which is not a point of $S_{\bar{t}}$. Therefore, each of the $r-1$ factors in $\prod_{i=1}^{r-1} (t - \zeta^j\bar{t})$ has $r+1$ distinct zeros in $S_{\bar{t}}$, and so does each of the $r-3$ factors in $\prod_{i=1}^{r-3} (x - \bar{x}_i)$. However, for each factor $x - \bar{x}_i$, $r-1$ zeros have already been counted as zeros of the $r-1$ factors in $\prod_{i=1}^{r-1} (t - \zeta^j\bar{t})$, hence we have that the function $f_{min}$ has in total $(r-1)(r+1) + (r-3)((r+1) - (r-1)) = r^2 + 2r - 7$ distinct zeros in $S_{\bar{t}}$, as claimed. Hence the codeword $\mathfrak{c}_{f_{min}}$ associated to $f_{min}$ has weight exactly
    \begin{equation}
        \label{eq:minweight:beq1}
        \mathrm{w}_H(\mathfrak{c}_{f_{min}}) = n - (r^2 + 2r - 7) = (r+1)^2 - (r^2 + 2r - 7) = 8.
    \end{equation}
    We claim that $\mathrm{w}_H(\mathfrak{c}_{f_{min}})$ is actually the minimum weight of $\CC(B,L)$, and we show this by proving that there is no function in $L\setminus\{0\}$ with more than $r^2+2r-7$ distinct zeros in $S_{\bar{t}}$. 

    Note that a function $f(x,t)=\sum_{m=1}^{r-1}a_m(t)x^m\in L \setminus\{0\}$ which, for a fixed $\bar{t}$, has $r+1$ distinct solutions as a polynomial $f(x,\bar{t})$ in $x$, must be such that $(t-\bar{t}) \mid a_m(t)$ for all $m=1,\ldots, r-1$, since the degree of $f(x,\bar{t})$ in $x$ is at most $r-1$. Hence, such an $f$ can be written as $f=c \cdot (t-\bar{t})\cdot \bar{f}(x,t)$, where $c\in \Fqm^*$, $\bar{f}\in \Fqm[x,t]$, $\mathrm{deg}_{t} \ \bar{f}\leq r-2$. Since we wish to maximize the number of points in $S_{\bar{t}}$ on which $f(x,t)=\sum_{m=1}^{r-1}a_m(t)x^m\in L \setminus\{0\}$ vanishes, we then need to have that $f$ vanishes for exactly $r-1$ distinct values of $\bar{t}$ (see \cref{eq:function:bound}), which, iterating the reasoning above, means that $f=c \cdot \prod_{i=1}^{r-1} (t - \zeta^j\bar{t})\cdot \tilde{f}(x)$. 
    
    Observe now that, symmetrically, $f=c \cdot \prod_{i=1}^{r-1} (t - \zeta^j\bar{t})\cdot \tilde{f}(x)=\sum_{m=0}^{r-1}d_m(x)t^m$, for $d_m(x)\in \Fqm[x]$ with $\mathrm{deg}\ d_m(x)\leq r-1$. Then, if $f$ has one more vanishing point in $S_{\bar{t}}$ with a certain $x$-coordinate $\bar{x}$ (i.e., if $f$ vanishes at $r$ distinct points with such an $x$-coordinate), it has to vanish on the whole subset of points of $S_{\bar{t}}$ with $x$-coordinate $\bar{x}$. Indeed, $f(\bar{x},t)$ is a polynomial in $t$ of degree $r-1$ with $r$ distinct roots, which implies that $f(\bar{x},t)$ has to vanish identically on all the points of $S_{\bar{t}}$ with $x$-coordinate equal to $\bar{x}$. Therefore, if $f$ vanishes at more than $r-1$ points with a fixed $x$-coordinate, then it vanishes at all the $r+1$ points of $S_{\bar{t}}$ with the same $x$-coordinate and, as above, we have that $(x-\bar{x}) \mid d_m(x)$ for all $m=0,\ldots, r-1$. Therefore, for $f\in L\setminus \{0\}$ to vanish on the largest possible number of points of $S_{\bar{t}}$, we necessarily have that
    \begin{equation}
    \label{eq:minimal:functions}
        f=c\cdot x \cdot \prod_{i=1}^{r-1} (t - \zeta^j\bar{t})\cdot \prod_{j=1}^{r-3} (x - \bar{x}_j),
    \end{equation}
    where $c\in \Fqm^*$ and $j$ can be at most $r-3$ because otherwise the monomial $x^{r-1}t^{r-1}\not \in L$ would appear in the expression of $f$. This hence shows that any nonzero function in $L$ with the largest possible number of vanishing points in $S_{\bar{t}}$ is exactly of the form in \cref{eq:minimal:functions}. Therefore, there is no function in $L\setminus\{0\}$ with more than $r^2+2r-7$ distinct zeros in $S_{\bar{t}}$ and the minimum distance $d$ of $\CC(B,L)$ is exactly $d=\mathrm{w}_H(\mathfrak{c}_{f_{min}}) = n - (r^2 + 2r - 7) = (r+1)^2 - (r^2 + 2r - 7) = 8$.
\end{proof}

We have thus determined the minimum distance $d$ for a code $\CC(B,L)$ with $b=1$. In the following subsection, we conclude the discussion on the codes $\CC(B,L)$ by showing that, when $b \geq 2$, the bound obtained in \cref{eq:bound:d} is sharp for $r=3$, since there are examples where the bound is attained (see \cref{table:codes}).

\begin{remark}
    \label[remark]{rem:asymptotics}
    It is not clear in which sense to study asymptotics of the family of codes $\CC(B,L)$, since when the length $n$ tends to infinity also $r$ needs to tend to infinity, which means that the field of definition $\Fqm$ cannot be fixed.
\end{remark}

\subsection{Explicit examples with \texorpdfstring{$r=3$}{r=3}}

In this subsection, we consider several examples of codes $\CC(B,L)$ as in \cref{def:code} with $r=3$. These examples show that, when $b\geq 2$, the bound on $d$ obtained in \cref{eq:bound:d} is sharp. Moreover, in all these examples, when $b\geq 2$ and the bound from \cref{eq:bound:d} is not sharp, the difference between the bound and the actual minimum distance of $\CC(B,L)$ is $1$. 

For any suitable choice of $q$, $m$ and $b$ as in \cref{def:code}, we denote by $\mathfrak{w}\in \Fqm$ a generator of the multiplicative group $\Fqm^*$, i.e. $\langle\mathfrak{w}\rangle = \Fqm^*$, and by $\zeta \in \Fqm$ a primitive $r+1$-th root of unity. Each row of \cref{table:codes} contains the parameters of a code $\CC(B,L)$ with $r=3$ defined over $\Fqm$. The first and second columns contain, respectively, the values of $q$ and $m$. The third column contains the value of $b$, that is, the number of chosen subsets of $\mathcal{G}_m$ (see \cref{eq:Gmdef:2}), and the fourth column contains the explicit expression of the set $B$ (see \cref{def:evalpoints}), showing which \emph{nice} elements of $\Fqm$ have been chosen for the construction of the set of evaluation points $S_B$ of the code $\CC(B,L)$. Note that, for $r$, $q$ and $m$ fixed, the code $\CC(B,L)$ is uniquely determined by the choice of the set of \emph{nice} elements $B$, since the $\Fqm$-vector space $L$ depends only on $r$. This explains why, in the table, it is enough to specify $q$, $m$ and $B$ to specify a certain code $\CC(B,L)$. The fifth column of the table contains the length $n$ of the code, which is $n=b(r+1)^2=b\cdot 4^2$. Observe that, as shown is \cref{lemma:genconstruction:dim}, the dimension $k$ of all the listed codes $\CC(B,L)$ is $k=r(r-1) - 1 =5$. The sixth column of \cref{table:codes} is left empty when $b=1$, since the value of the minimum distance $d$ of $\CC(B,L)$ is computed in \cref{prop:mindist:beq1} in this case. On the other hand, when $b\geq 2$, the sixth column contains the estimate $\Delta$ for the minimum distance $d$ of $\CC(B,L)$ obtained from our bound \cref{eq:bound:d}. More precisely,
\begin{itemize}
    \item if $b=1$, by \cref{prop:mindist:beq1}, the distance $d$ of any code $\CC(B,L)$ is exactly $d=8$, for any admissible $r$ as in \cref{def:code}. In this case, we leave the value corresponding to $\Delta$ empty, as we already know the exact value of $d$.
    \item If $b \geq 2$, we set instead $\Delta:=n - (2r^2 - 2r - 1 - 2)$, that is the bound obtained in \cref{eq:bound:d}. Note that $\Delta = n - 9$ for $r=3$.
\end{itemize}
Finally, the seventh column of the table contains the actual value $d$ of the minimum distance of $\CC(B,L)$, and the eighth column contains the Singleton-type upper bound $\Delta_{upper}:=16\cdot b - 5$ on the minimum distance $d$, obtained from \cref{eq:tbf:upper:explicit} with $r=3$.

For $q^m=7^2, 9^2, 11^2, 13^2$, \cref{table:codes} contains data for all the possible codes $\CC(B,L)$ with $r=3$ that can be obtained with the construction in \cref{def:code}, that is, all the possible subsets $B$ of \emph{nice} elements of $\Fqm$ are considered. For $q^m=5^4$, it can be verified that $\mathcal{G}_m$ is partitioned into exactly $8$ subsets $\mathcal{G}_m^{(\ell)}$ as in \cref{eq:Gmdef:2}, which give rise to $2^8-1=255$ possible codes $\CC(B,L)$ as in \cref{def:code}, so we choose to include in the table only a few examples. Note that in the examples included with $q^m=5^4$ and $b\geq 5$ we find that the bound in \cref{eq:bound:d} is sharp.  

In all the examples in \cref{table:codes} with $b=1$, we find $\Delta=d$ as predicted by \cref{prop:bound:d}. Moreover, the examples in the rows in bold show that the bound in \cref{eq:bound:d} is sharp, since in these cases we find $\Delta=d$. Furthermore, observe that in all the examples in the table we find that $d-\Delta \leq 1$.

\begin{table}[ht]
  \centering
  \begin{NiceTabular}{c c c c c c c c}
    \hline
    \RowStyle[bold]{}
    $\quad$ & $\quad$ & $\quad$ & $r= 3$ & $\quad$ & $\quad$ & $\quad$\\
    \hline
    $q$ & $m$ & $b$ & $B$ & $n$ & $\Delta$ & $d$ & $\Delta_{upper}$\\
    \hline
    \noalign{\vskip 0.5mm} 
    $7^2$ & $1$ & $1$ & $\{\zeta^i \mathfrak{w}^{46} \mid \ i=1,\ldots, r+1\}$ & $16$ & $-$ & $8$ & $11$\\
    $7^2$ & $1$ & $1$ & $\{\zeta^i \mathfrak{w}^6 \mid \ i=1,\ldots, r+1\}$ & $16$ & $-$ & $8$ & $11$\\
    $7^2$ & $1$ & $2$ & $\{\zeta^i \mathfrak{w}^{46}, \zeta^i \mathfrak{w}^{6} \mid \ i=1,\ldots, r+1\}$ & $32$ & $23$ & $24$ & $27$\\
    $9$ & $2$ & $1$ & $\{\zeta^i \mid \ i=1,\ldots, r+1\}$ & $16$ & $-$ & $8$ & $11$\\
    $11^2$ & $1$ & $1$ & $\{\zeta^i \mid \ i=1,\ldots, r+1\}$ & $16$ & $-$ & $8$ & $11$\\
    $11^2$ & $1$ & $1$ & $\{\zeta^i \mathfrak{w}^{15} \mid \ i=1,\ldots, r+1\}$ & $16$ & $-$ & $8$ & $11$\\
    $11^2$ & $1$ & $1$ & $\{\zeta^i 4 \mid \ i=1,\ldots, r+1\}$ & $16$ & $-$ & $8$ & $11$\\
    $11^2$ & $1$ & $2$ & $\{\zeta^i, \zeta^i \mathfrak{w}^{15} \mid \ i=1,\ldots, r+1\}$ & $32$ & $23$ & $24$ & $27$\\
    $11^2$ & $1$ & $2$ & $\{\zeta^i, \zeta^i 4 \mid \ i=1,\ldots, r+1\}$ & $32$ & $23$ & $24$ & $27$\\
    $11^2$ & $1$ & $2$ & $\{\zeta^i \mathfrak{w}^{15}, \zeta^i 4 \mid \ i=1,\ldots, r+1\}$ & $32$ & $23$ & $24$ & $27$\\
    $11^2$ & $1$ & $3$ & $\{\zeta^i, \zeta^i \mathfrak{w}^{15}, \zeta^i 4 \mid \ i=1,\ldots, r+1\}$ & $48$ & $39$ & $40$ & $43$\\
    $13$ & $2$ & $1$ & $\{\zeta^i \mathfrak{w} \mid \ i=1,\ldots, r+1\}$ & $16$ & $-$ & $8$ & $11$\\
    $13$ & $2$ & $1$ & $\{\zeta^i \mathfrak{w}^{13} \mid \ i=1,\ldots, r+1\}$ & $16$ & $-$ & $8$ & $11$\\
    $13$ & $2$ & $1$ & $\{\zeta^i \mathfrak{w}^{21} \mid \ i=1,\ldots, r+1\}$ & $16$ & $-$ & $8$ & $11$\\
    $13$ & $2$ & $1$ & $\{\zeta^i 4 \mid \ i=1,\ldots, r+1\}$ & $16$ & $-$ & $8$ & $11$\\
    $13$ & $2$ & $2$ & $\{\zeta^i \mathfrak{w}, \zeta^i \mathfrak{w}^{13} \mid \ i=1,\ldots, r+1\}$ & $32$ & $23$ & $24$ & $27$\\
    $13$ & $2$ & $2$ & $\{\zeta^i \mathfrak{w}, \zeta^i \mathfrak{w}^{21} \mid \ i=1,\ldots, r+1\}$ & $32$ & $23$ & $24$ & $27$\\
    $13$ & $2$ & $2$ & $\{\zeta^i \mathfrak{w}, \zeta^i 4 \mid \ i=1,\ldots, r+1\}$ & $32$ & $23$ & $24$ & $27$\\
    $13$ & $2$ & $2$ & $\{\zeta^i \mathfrak{w}^{13}, \zeta^i \mathfrak{w}^{21} \mid \ i=1,\ldots, r+1\}$ & $32$ & $23$ & $24$ & $27$\\
    $13$ & $2$ & $2$ & $\{\zeta^i \mathfrak{w}^{13}, \zeta^i 4 \mid \ i=1,\ldots, r+1\}$ & $32$ & $23$ & $24$ & $27$\\
    $13$ & $2$ & $2$ & $\{\zeta^i \mathfrak{w}^{21}, \zeta^i 4 \mid \ i=1,\ldots, r+1\}$ & $32$ & $23$ & $24$ & $27$\\
    $13$ & $2$ & $3$ & $\{\zeta^i \mathfrak{w}, \zeta^i \mathfrak{w}^{13}, \zeta^i \mathfrak{w}^{21} \mid \ i=1,\ldots, r+1\}$ & $48$ & $39$ & $40$ & $43$\\
    $13$ & $2$ & $3$ & $\{\zeta^i \mathfrak{w}, \zeta^i \mathfrak{w}^{13}, \zeta^i 4 \mid \ i=1,\ldots, r+1\}$ & $48$ & $39$ & $40$ & $43$\\
    $13$ & $2$ & $3$ & $\{\zeta^i \mathfrak{w}, \zeta^i \mathfrak{w}^{21}, \zeta^i 4 \mid \ i=1,\ldots, r+1\}$ & $48$ & $39$ & $40$ & $43$\\
    $13$ & $2$ & $3$ & $\{ \zeta^i \mathfrak{w}^{13}, \zeta^i \mathfrak{w}^{21}, \zeta^i 4 \mid \ i=1,\ldots, r+1\}$ & $48$ & $39$ & $40$ & $43$\\    
    ${\bf 13}$ & ${\bf 2}$ & ${\bf4}$ & ${\bf\{\zeta^i \mathfrak{w}, \zeta^i \mathfrak{w}^{13}, \zeta^i \mathfrak{w}^{21}, \zeta^i 4 \mid \ i=1,\ldots, r+1\}}$ & ${\bf64}$ & ${\bf 55}$ & ${\bf 55}$ & $59$\\
    $5$ & $4$ & $3$ & $\{\zeta^i, \zeta^i \mathfrak{w}^{9}, \zeta^i \mathfrak{w}^{13} \mid \ i=1,\ldots, r+1\}$ & $48$ & $39$ & $40$ & $43$\\
    $5$ & $4$ & $4$ & $\{\zeta^i, \zeta^i \mathfrak{w}^{9}, \zeta^i \mathfrak{w}^{13}, \zeta^i \mathfrak{w}^{33} \mid \ i=1,\ldots, r+1\}$ & $64$ & $55$ & $56$ & $59$\\
    ${\bf 5}$ & ${\bf 4}$ & ${\bf 5}$ & ${\bf \{\zeta^i, \zeta^i \mathfrak{w}^{9}, \zeta^i \mathfrak{w}^{13}, \zeta^i \mathfrak{w}^{33}, \zeta^i \mathfrak{w}^{39} \mid \ i=1,\ldots, r+1\}}$ & ${\bf80}$ & ${\bf 71}$ & ${\bf 71}$ & $75$\\
    ${\bf 5}$ & ${\bf 4}$ & ${\bf 6}$ & ${\bf \{\zeta^i, \zeta^i \mathfrak{w}^{9}, \zeta^i \mathfrak{w}^{13}, \zeta^i \mathfrak{w}^{33}, \zeta^i \mathfrak{w}^{39}, \zeta^i \mathfrak{w}^{45} \mid \ i=1,\ldots, r+1\}}$ & ${\bf96}$ & ${\bf 87}$ & ${\bf 87}$ & $91$\\
    ${\bf 5}$ & ${\bf 4}$ & ${\bf 7}$ & ${\bf \{\zeta^i, \zeta^i \mathfrak{w}^{9}, \zeta^i \mathfrak{w}^{13}, \zeta^i \mathfrak{w}^{33}, \zeta^i \mathfrak{w}^{39}, \zeta^i \mathfrak{w}^{45}, \zeta^i \mathfrak{w}^{65} \mid \ i=1,\ldots, r+1\}}$ & ${\bf112}$ & ${\bf 103}$ & ${\bf 103}$ & $107$\\
    \hline
  \end{NiceTabular}
  \caption{\label{table:codes} Each row of the table contains the value $\Delta$ and the actual minimum distance $d$ of a code $\CC(B,L)$ over $\Fqm$ with $r=3$. When $b=1$, the value of $\Delta$ is left empty, since we know from \cref{prop:mindist:beq1} that $d=8$. When $b\geq 2$, $\Delta$ is instead the lower bound for $d$ determined in \cref{eq:bound:d}, namely $\Delta:=n - (2r^2 - 2r - 1 - 2) = n - 9$. The value $\Delta_{upper}:=16\cdot b - 5$ is the Singleton-type upper bound on $d$ from \cref{eq:tbf:upper:explicit} with $r=3$.}
\end{table}

\section{Codes \texorpdfstring{$\CC(B,L)$}{C(B,L)} with \texorpdfstring{$r=3$}{r=3}: a reinterpretation with genus 1 fibrations}
\label{sec:genus1:fibrations}

In this section, we expand on \cref{rem:geometric:intuition} and rephrase the construction proposed in \cref{def:code} for $r=3$ in a more geometric perspective. This explains in which sense our construction can be viewed as a generalization of the results on LRCs from elliptic surfaces contained in \cite[Section VI]{SVAV21}.

Let $m$ be an integer satisfying \cref{eq:m:condition} with $r=3$, for some $q$, and let $\mathcal{E}_3$ over $\Fqm$ be as in \cref{eq:Erdef}. Assume moreover that $q$ is not an even power of a prime $p\equiv 3 \mod 4$. Observe that $\mathcal{E}_3$ admits two genus 1 fibrations, given by the projections $\Pi_t$ and $\Pi_x$ to the $t$-line and to the $x$-line, respectively. With a slight abuse, we refer here to the notations introduced in \cref{rem:geometric:intuition}. The curve $\mathcal{M}:=\mathcal{M}_3$ is a multisection of degree $r+1=4$ for both fibrations. Hence, in the light of \cite[Section VI]{SVAV21}, constructing a code on the points of $\mathcal{M}$ whose space of functions is simultaneously a subspace of the Riemann-Roch spaces of $4O_t$ and of $4O_x$, where $O_t$ (resp. $O_x$) is the zero section of the projection to the $t$-line (resp. the projection to the $x$-line), naturally yields a locally recoverable code with availability $2$. Observe that \cref{def:code} provides indeed an example of such construction. In what follows, we present a different take on the construction of \cref{sec:code:construction}, where we obtain the same codes as in \cref{def:code} but are able to devise a recovery strategy different than the one discussed in \cref{lemma:genconstruction:recovery}.

We start by recalling  \cref{def:code} for the case $r=3$.

\begin{definition}
    \label[definition]{def:code:req3}
    Let $q \equiv 1\pmod{4}$ and let $\E_3$ as in \cref{eq:Erdef} and $\MM_3$ as in \cref{eq:Mrdef} be defined over $\Fqm$, where $m$ is a positive integer such that $\mathcal{G}_m \neq \emptyset$.
    Choose subsets $\mathcal{G}_m^{(1)},\ldots, \mathcal{G}_m^{(b)}$ of $\mathcal{G}_m$, where $|\mathcal{G}_m|=4M$ and $b\in \mathbb{Z}$, $1\leq b \leq M$. Let $B:=\cup_{\ell=1}^b \mathcal{G}_m^{(\ell)}$ and $S_B$ as in \cref{def:evalpoints}, with $r=3$.

    Furthermore, consider the $\Fqm$-vector space
    \begin{align}
    \label{eq:L:definition:req3}
    L:= \langle xt^j, x^2t^h\mid j=0,1,2, \ h=0, 1 \rangle,
    \end{align}
    and the linear evaluation map
    \begin{align*}
        \mathrm{ev}_{S_B}: \ &L \longrightarrow \left(\Fqm\right)^n\\
        &f \longmapsto \left(f(P_{1,1}^{(1)}), \ldots, f(P_{4,4}^{(b)})\right). 
    \end{align*}
    For more compact notations, we also write $\mathrm{ev}_{S_B}(f)=\left(f(P_{i,j}^{(\ell)})\right)_{i,j=1,\ldots,4}^{\ell=1,\ldots,b}$.
    We define an AG code $\mathfrak{C}(B,L)$ of length $n:=|S_B|=b\cdot 4^2$ on $\E_3$ as the image of $\mathrm{ev}_{S_B}$, that is,
\begin{equation}
\label{eq:codedef:req3}
    \mathfrak{C}(B,L):=\left\{ \left(f(P_{i,j}^{(\ell)})\right)_{i,j=1,\ldots,4}^{\ell=1,\ldots,b} \ \biggm| \ f\in L\right\}.
\end{equation}
\end{definition}

Observe that the functions in $L$ restrict both on vertical fibers $\Pi_t^{-1}(\bar{t})$ and on horizontal fibers $\Pi_x^{-1}(\bar{x})$ as rational functions with a pole divisor of degree at most $r+1$, since $\mathrm{deg} \ (x)_\infty = 2$ (resp. $\mathrm{deg} \ (t)_\infty = 2$) on any $\Pi_t^{-1}(\bar{t})$ (resp. on $\Pi_x^{-1}(\bar{x})$). This allows us to propose an alternative recovery strategy for the codes $\CC(B,L)$ with $r=3$, which is discussed in the following lemma.

\begin{lemma}
\label[lemma]{lemma:recovery:req3}
    A code $\mathfrak{C}(B,L)$ as in \cref{def:code} has locality $r=3$ and availability $2$. More precisely, for any $f\in L\setminus\{0\}$, each symbol $f(P_{\bar{i},\bar{j}}^{(\bar{\ell})})$ of the codeword $\mathfrak{c} := \left(f(P_{i,j}^{(\ell)})\right)_{i,j=1,\ldots,4}^{\ell=1,\ldots,b}$ has two distinct recovery sets, namely
    \begin{align*}
        f(H_{\bar{i}}^{(\bar{\ell})})&:=\left\{f(P_{\bar{i},j}^{(\bar{\ell})}) \mid j=1,\ldots,4 \ \mbox{and} \ j\neq \bar{j}\right\},\\
        f(V_{\bar{j}}^{(\bar{\ell})})&:=\left\{f(P_{i,\bar{j}}^{(\bar{\ell})}) \mid i=1,\ldots,4 \ \mbox{and} \  i\neq \bar{i}\right\}.
    \end{align*}
    We refer to $f(H_{\bar{i}}^{(\bar{\ell})})$ (resp. $f(V_{\bar{j}}^{(\bar{\ell})})$) as a \emph{horizontal} (resp. \emph{vertical}) recovery set for $f(P_{\bar{i},\bar{j}}^{(\bar{\ell})})$ (see \cref{eq:horizontalset} and \cref{eq:verticalset}).
\end{lemma}
\begin{proof}
With notations as in \cref{lemma:recovery:req3}, consider $f=\sum_{s=1}^{2}a_s(t)x^s \in L\setminus\{0\}$, where $a_s(t)\in \Fqm[t]$ is a polynomial of degree $\mathrm{deg} \ a_s(t)\leq 2$ for all $s=0,1,2$, and the codeword $\mathfrak{c}$ obtained by evaluating $f$ at the points in $S$, that is,  
\begin{equation*}
    \mathfrak{c} := \left(f(P_{i,j}^{(\ell)})\right)_{i,j=1,\ldots,4}^{\ell=1,\ldots,b}.
\end{equation*}
If $\mathfrak{c}$ has a missing symbol, which without loss of generality we can assume to be $f(P_{1,1}^{(1)})$, we proceed as follows. 

\begin{enumerate}
    \item \textbf{Recovering from a vertical recovery set.}
Suppose first that we wish to recover $f(P_{1,1}^{(1)})$ using the vertical recovery set $f(V_1^{(1)})$.
We start by observing that $P_{1,1}^{(1)}$ and the points in the vertical set $V_1^{(1)}$, which lie in the intersection of the vertical fiber $\Pi_t^{-1}(\bar{t}_1^{(1)})$ and the multisection $\mathcal{M}$, are in particular the affine points of intersection of the elliptic curve $\Pi_t^{-1}(\bar{t}_1^{(1)})$ with the conic defined by $y=x^2+1$ in $\A_{(x,y)}^2$. In fact, denoting by $O$ the point at infinity of $\Pi_t^{-1}(\bar{t}_1^{(1)})$, the intersection divisor of the curves $\Pi_t^{-1}(\bar{t}_1^{(1)})$ and $y=x^2+1$ in $\A_{(x,y)}^2$ can be written as $P_{1,1}^{(1)} + P_{2,1}^{(1)} + P_{3,1}^{(1)} + P_{4,1}^{(1)} + 2O$. Since $\Pi_t^{-1}(\bar{t}_1^{(1)})$ is in Weierstrass form and $O$ is an inflection point, we have that $P_{1,1}^{(1)} + P_{2,1}^{(1)} + P_{3,1}^{(1)} + P_{4,1}^{(1)} + 2O = O$ in the group law of $\Pi_t^{-1}(\bar{t}_1^{(1)})$, which implies in particular that $P_{1,1}^{(1)} + P_{2,1}^{(1)} + P_{3,1}^{(1)} + P_{4,1}^{(1)} = O$. 

Observe now that the elliptic curve $\mathcal{E}_3: y^2=x^3-x^2(t^4+1)+xt^4$ over $\Fqm(t)$ yields an elliptic fibration on a K3 surface, with generic fiber that has finite Mordell--Weil group. Indeed, the fibers of the projection $\Pi_t$ to the $t$-line are clearly elliptic curves, and the induced fibration has a natural section, namely $([0:1:0], t)$. Therefore, the projection is indeed an elliptic fibration. Moreover, the underlying surface is K3 since the degree of the coefficients satisfy $v(a_i(t))\leq 2i$ and $v(a_2(t))=4$ (see the discussion in \cite[Subsect.4.10]{ShiodaSchuett}). 
The fact that the Mordell--Weil group is finite is a direct application of the Shioda-Tate formula (\cite[Cor. 6.13]{ShiodaSchuett}), as we explain in what follows. 

After an inspection of the discriminant, we obtain that the singular fibers are of types $2I_8, 4I_2$. Let $\rho$ be the Picard number of $\E_3$. Then, since $\E_3$ is K3, its Picard number satisfies  $1\leq \rho \leq 20$. Let $\mathfrak{r}$ be the the rank of the Mordell--Weil group of the generic fiber of $\Pi_t$. Then, the Shioda-Tate formula yields, under the assumption that $q$ is not an even power of a prime $p\equiv 3 \mod 4$, 
\[
\rho= \mathfrak{r} +2 + 2\cdot7 + 4\cdot1 = r+ 20.
\]
Since $\rho \leq 20$ and $\mathfrak{r}\geq 0$, we conclude that $\rho=20$ and therefore $\mathfrak{r}=0$ as claimed.

We can hence apply \cite[Lemma VI.2]{SVAV21}, which ensures that the evaluation points in the vertical set $V_1^{(1)}$ sum to $O$ in the group law of $\Pi_t^{-1}(\bar{t}_1^{(1)})$. To conclude it is then enough to note that, since no point in $S_B$ coincides with $O$ (by \cref{def:code:req3} and the fact that $\Pi_t^{-1}(\bar{t}_1^{(1)})$ is in Weierstrass form), we can apply \cite[Lemma VI.1]{SVAV21}. Indeed, the assumption that no point in $V_1^{(1)}$ is $O$, while the sum of all the points in $V_1^{(1)}$ is equal to $O$, is actually enough for the argument used in the proof of \cite[Lemma VI.1]{SVAV21}. This shows that the recoverability property holds for the vertical recovery sets.

\item 
\textbf{Recovering from a horizontal recovery set.} 
Suppose now that we wish to recover $f(P_{1,1}^{(1)})$ using the horizontal recovery set $f(H_1^{(1)})$. Observe first that, differently from $\Pi_t$, the fibration $\Pi_x$ does not admit a section. Indeed, write $\Pi_x$ as a family of hyperelliptic curves in the following way:
\begin{equation}\label{eq: genus 1 no section}
y^2=(t^4-x)(-x^2+x).
\end{equation}
It can then be seen that the point $[0:1:0]$ only splits in the desingularisation when $-x^2+x$ is a square. Therefore, even if there are several fibers that yield elliptic curves, $\Pi_x$ does not admit a section. However, we note the following:

\begin{enumerate}
\item[i)] The fibration (\ref{eq: genus 1 no section}) is a quadratic twist by $\mathfrak{d}(x)=-x^2+x$ of the rational elliptic fibration with Mordell--Weil rank 0 and singular fibers of types $III, III^*$:
\begin{equation}\label{eq: RES}
y^2=t^4-x.
\end{equation}
\item[ii)] Since the twist happens above $x=0,1$, the fibration \ref{eq: genus 1 no section} has singular fibers of types $2III^*$ and $I_0^*$.
\item[iii)] Furthermore, applying the Shioda--Tate formula (\cite[Cor. 6.13]{ShiodaSchuett}) to the Jacobian fibration, we obtain that the Mordell--Weil rank of the generic fiber of the Jacobian fibration is 0, and by \cite[Thm 2.4]{Shimada} we conclude that the Mordell--Weil group is $\mathbb{Z}/2\mathbb{Z}$.
\item[iv)] Now, from iii), we know that the trace of the image of $\mathcal{M}$ in the Jacobian fibration is contained in the 2-torsion of the generic fiber. By \cite[Specialization thm.]{Silverman}, the sum of the points in the intersection of the image of $\mathcal{M}$ and the Jacobian of an arbitrary fiber $\Pi_x^{-1}(\bar{x})$ of $\Pi_x$ lies in the 
$2$-torsion points of the fiber $\Pi_x^{-1}(\bar{x})[2]$.
\end{enumerate}
By construction (see \cref{def:code:req3}), the evaluation points of $\CC(B,L)$ all lie on fibers of $\Pi_x$ that are elliptic curves, i.e., that are above points $\bar{x}$ such that $-\bar{x}^2+\bar{x}$ is a square. Since these fibers coincide with their Jacobian, we hence have that, by $\mathrm{iii)}$ and $\mathrm{iv)}$ above, the hypotheses of \cite[Lemma VI.1]{SVAV21} and \cite[Lemma VI.2]{SVAV21} are satisfied, and hence the recovery procedure described in \cite[Lemma VI.1]{SVAV21} works in our case as well. This shows that the recoverability property of $\CC(B,L)$ holds also for the horizontal recovery sets, concluding the proof.
\end{enumerate}
\end{proof}

\begin{remark}
    In the proof of \cref{lemma:recovery:req3}, the recovery procedure from a vertical recovery set could also be proved without using \cite[Lemmas VI.1 and VI.2]{SVAV21}, while still using a more geometric argument than the one in the proof of \cref{lemma:genconstruction:recovery}. Indeed, to show that $f(P_{1,1}^{(1)})$ can be uniquely recovered from the values of $f$ at the points in the vertical set $V_1^{(1)}=\{P_{2,1}^{(1)},P_{3,1}^{(1)},P_{4,1}^{(1)}\}$ one could also argue in the following way.

    Let $f_1,f_2$ be the restrictions to $\Pi_t^{-1}(\bar{t}_1^{(1)})$ of two rational functions in $L\setminus \{0\}$ such that $f_1(P_{i,1}^{(1)})=f_2(P_{i,1}^{(1)})$, for $i=2,3,4$, and $f_1(P_{1,1}^{(1)}) \neq f_2(P_{1,1}^{(1)})$, and consider the function on $\Pi_t^{-1}(\bar{t}_1^{(1)})$ given by their difference $g:=f_1-f_2$. Observe that, since $f_1,f_2\in \mathscr{L}(\Pi_t^{-1}(\bar{t}_1^{(1)}),4O)$, where $\mathscr{L}(\Pi_t^{-1}(\bar{t}_1^{(1)}),4O)$ denotes the Riemann-Roch space of the divisor $4O$ on $\Pi_t^{-1}(\bar{t}_1^{(1)})$, we also have that $g\in\mathscr{L}(\Pi_t^{-1}(\bar{t}_1^{(1)}),4O)$. More precisely, the principal divisor of $g$ on $\Pi_t^{-1}(\bar{t}_1^{(1)})$ can be written as $(g) = D + P_{2,1}^{(1)} + P_{3,1}^{(1)} + P_{4,1}^{(1)} - \mathfrak{m}O$, where $D$ is an effective divisor whose support might contain $P_{2,1}^{(1)},P_{3,1}^{(1)},P_{4,1}^{(1)}$, but does not contain $P_{1,1}^{(1)}$, and $\mathfrak{m}:=\mathrm{max}\{\mathrm{deg}(f_1)_\infty, \mathrm{deg}(f_2)_\infty\}$. Note that, by construction, $3\leq \mathfrak{m} \leq 4$, which means that $\mathfrak{m}$ can only be either $3$ or $4$. 
\begin{itemize}
    \item If $\mathfrak{m}=3$, then the support of the divisor $D$ must be empty, that is, $(g) = P_{2,1}^{(1)} + P_{3,1}^{(1)} + P_{4,1}^{(1)} - 3O$, which by Abel's Theorem (see \cite[Corollary III 3.5]{TVN}) implies that $P_{2,1}^{(1)} + P_{3,1}^{(1)} + P_{4,1}^{(1)} = O$. Since we already have that $P_{1,1}^{(1)} + P_{2,1}^{(1)} + P_{3,1}^{(1)} + P_{4,1}^{(1)} = O$, this yields $P_{1,1}^{(1)}=O$, which is a contradiction.
    \item If instead $\mathfrak{m}=4$, then the support of $D$ must contain exactly one point $P$, that is, $(g) = P + P_{2,1}^{(1)} + P_{3,1}^{(1)} + P_{4,1}^{(1)} - 4O$. By Abel's Theorem, this implies that $P + P_{2,1}^{(1)} + P_{3,1}^{(1)} + P_{4,1}^{(1)} = O$. Combining this with $P_{1,1}^{(1)} + P_{2,1}^{(1)} + P_{3,1}^{(1)} + P_{4,1}^{(1)} = O$, we hence obtain that $P=P_{1,1}^{(1)}$, which means that $P_{1,1}^{(1)}$ is a zero of $g$, implying that $f_1(P_{1,1}^{(1)})=f_2(P_{1,1}^{(1)})$ and contradicting our assumptions on $f_1$ and $f_2$.
\end{itemize}
This shows that any two functions in $L$ that coincide on the points in $V_1^{(1)}$ also have the same value on $P_{1,1}^{(1)}$, hence the recoverability property holds for the vertical recovery sets.    
\end{remark}

\section*{Acknowledgments}
The authors would like to sincerely thank Ander Arriola Corpion, for a relevant comment on the geometry of the genus 1 fibrations in \cref{sec:genus1:fibrations}, and Mauricio Frieri, for pointing out an issue in the first version of \cref{lemma:existence:of:evalpoints}.

This work was partially supported by an NWO Open Competition ENW – XL grant (project \enquote{Rational points: new dimensions} - OCENW.XL21.XL21.011). We thank all the members of the consortium for the welcoming and stimulating atmosphere.

Part of this work was done while C. Salgado was visiting the Institute for Advanced Study, in Princeton. It is a pleasure to thank the institute and its staff for the amazing work environment, and to acknowledge the support of the James D. Wolfensohn Fund and the National Science Foundation under Grant No. DMS-1926686. 

L. Vicino is supported by a DFF-International Postdoc Grant (grant ID: 10.46540/5246-00028B) from Independent Research Fund Denmark.


\begin{thebibliography}{30}

\bibitem{AAAOAVA24} K. Aguilar, A. Álvarez, R. Ardila, P. S. Ocal, C. R. Avila, A. Várilly-Alvarado, {\it Locally recoverable algebro-geometric codes from projective bundles}, Designs, Codes and Cryptography \textbf{94}, n.58 (2026).

\bibitem{BBV21} S. Ballentine, A. Barg, S. Vlăduţ, {\it Codes with hierarchical locality from covering maps of curves}, IEEE Transactions on Information Theory \textbf{65.10} (2019), 6056--6071.

\bibitem{BHHMVA00} A. Barg, K. Haymaker, E.W. Howe, G.L. Matthews, A. Várilly-Alvarado, {\it Locally recoverable codes from algebraic curves and surfaces}, Algebraic Geometry for Coding Theory and Cryptography: IPAM, Los Angeles, CA, February 2016 (2017), 95--127.

\bibitem{Bnotes} P. Beelen, {\it Kummer theory and Newton polygons}, unpublished notes, Technical University of Denmark (2023).

\bibitem{Bthesis} P. Beelen, {\it Algebraic geometry and coding theory}, Phd Thesis 1 (Research TU/e / Graduation
TU/e), Mathematics and Computer Science, Technische Universiteit Eindhoven (2001).

\bibitem{BP00} P. Beelen, R. Pellikaan, {\it The Newton Polygon of Plane Curves with Many Rational Points}, Designs, Codes and Cryptography \textbf{21} (2000), 41--67.

\bibitem{BMW24} J. Berg, B. Malmskog, W. West, {\it Codes with Hierarchical Locality on {A}rtin-{S}chreier Surfaces}, Journal of Algebra and Its Applications,  \textbf{24} (2025), 2541023.

\bibitem{DGWWR} A. Dimakis, P. Godfrey, Y. Wu, M. Wainwright, K. Ramchandran, {\it Network Coding for Distributed Storage Systems} Transactions on Information Theory, IEEE \textbf{56}
(2010), 4539--4551

\bibitem{EF21} L. El Fadil, {\it Prime ideal factorization in a number field via Newton polygons}, Czechoslovak Mathematical Journal \textbf{71.2} (2021), 529--543.

\bibitem{FJ05} M.D. Fried, M. Jarden, {\it Field arithmetic}, 4th edition, Springer Berlin Heidelberg, Berlin, Heidelberg (2009).

\bibitem{GHSY} P. Gopalan, C. Huang, H. Simitci, S. Yekhanin {\it On the locality of codeword symbols}, IEEE Transactions on Information Theory \textbf{58.11} (2012), 6925-6934.

\bibitem{G70} V.D.Goppa, {\it A new class of linear correcting codes}, Problemy Peredachi Informatsii, \textbf{6}, n.3, (1970), 24--30.

\bibitem{G71} V.D.Goppa, {\it A rational representation of codes and ({L},g)-codes}, Problemy Peredachi Informatsii, \textbf{7}, n.3, (1971), 41--49.

\bibitem{G77} V.D.Goppa, {\it Codes associated with divisors}, Problemy Peredachi Informatsii, \textbf{13}, n.1, (1977), 33--39.

\bibitem{G81} V.D.Goppa, {\it Codes on algebraic curves}, Doklady Akademii Nauk SSSR, \textbf{259}, n.6, (1981), 1289--1290.

\bibitem{G82} V.D.Goppa, {\it Algebraico-geometric codes}, Izvestiya Rossiiskoi Akademii Nauk. Seriya Matematicheskaya, \textbf{46}, n.4, (1982), 762--781.

\bibitem{HLM} J. Han, L.A. Lastras-Montano, {\it Reliable memories with subline accesses}, 2007 IEEE International Symposium on Information Theory (2007), 2531--2535.

\bibitem{HMM18} K. Haymaker, B. Malmskog, G.L. Matthews, {\it Locally recoverable codes with availability {$t\geq 2$} from fiber products of curves}, Advances in Mathematics of Communications \textbf{12}, n. 2, (2018), 317--336.

\bibitem{HMM25} K. Haymaker, B. Malmskog, G.L. Matthews, {\it Algebraic hierarchical locally recoverable codes with nested affine subspace recovery}, Designs, Codes and Cryptography \textbf{93.1}, (2025), 111--132.

\bibitem{HCL} C. Huang, M. Chen, J. Lin, {\it Pyramid codes: Flexible schemes to trade space for access efficiency in reliable data storage systems}, ACM Transactions on Storage (TOS) \textbf{9.1} (2013), 1--28.

\bibitem{Pellikaan} T. H\o holdt, J.H.  van Lint, and R. Pellikaan, {\it Algebraic geometry codes},  {Handbook of coding theory, {V}ol. {I}, {II}}, North-Holland, Amsterdam (1998), 871--961.
      
\bibitem{Rosen} M. Rosen, {\it Number theory in function fields}, Springer Science \& Business Media, Berlin, Heidelberg (2013).

\bibitem{SVAV21} C. Salgado, A. Várilly-Alvarado, J.F. Voloch, {\it Locally recoverable codes on surfaces}, IEEE Transactions on Information Theory \textbf{67.9} (2021), 5765--5777.

\bibitem{ShiodaSchuett} M. Schuett, T. Shioda, {\it Elliptic Surfaces},  Adv. Stud. Pure Math., 51--160 (2010) 

\bibitem{Shimada} I. Shimada, {\it On elliptic K3 surfaces} The Michigan Mathematical Journal \textbf{47} (3), (2005).

\bibitem{Silverman} J. Silverman, {\it The arithmetic of elliptic curves} 2nd ed., Graduate Texts in Mathematics, vol. {\bf{106}}, Springer, Dordrecht, (2009).

\bibitem{Sti} H. Stichtenoth, {\it Algebraic function fields and codes}, Springer Graduate Texts in Mathematics, Berlin, Heidelberg (2009).

\bibitem{TB14} I. Tamo, A. Barg, {\it A family of optimal locally recoverable codes}, IEEE Transactions on Information Theory \textbf{60.8} (2014), 4661--4676.

\bibitem{TB14b} I. Tamo, A. Barg, {\it Bounds on locally recoverable codes with multiple recovering sets}, 2014 IEEE International Symposium on Information Theory (2014), 691--695.

\bibitem{TBF16} I. Tamo, A. Barg, A. Frolov, {\it Bounds on the parameters of locally recoverable codes}, IEEE Transactions on Information Theory \textbf{62.6} (2016), 3070--3083.

\bibitem{TBV} I. Tamo, A. Barg, S.G. Vlăduţ, {\it Locally recoverable codes on algebraic curves} IEEE Transactions on Information Theory, \textbf{63}, Issue 8, (2017) 4928--4939

\bibitem{TVN} M.A. Tsfasman, S.G. Vlăduţ, D. Nogin {\it Algebraic Geometric Codes: Basic Notions}, American Mathematical Society, USA (2007).


\end{thebibliography}
\end{document}